\numberwithin{equation}{section}
\newtheorem{theorem}{Theorem}[section]
\newtheorem{thm}{Theorem}[section]
\newtheorem{lemma}{Lemma}[section]
\newtheorem{alg}[thm]{Algorithm}
\newtheorem{remark}[thm]{Remark}
\def \bX{{\mathbf X}}
\def \bV{{\mathbf V}}
\def \bH{{\mathbf H}}
\def \bu{{\mathbf u}}
\def \bv{{\mathbf v}}
\def \bV{{\mathbf V}}
\def \bw{{\mathbf w}}
\def \be{{\mathbf e}}
\def \bE{{\mathbf E}}
\def \bF{{\mathbf F}}
\def \b0{{\mathbf 0}}
\def \bL{{\mathbf L}}
\def \bta{{\mathbf \eta}}
\author{
Mine Akbas \thanks{Department of Mathematics, Duzce University, 81620, D{\"{u}}zce (mineakbas@duzce.edu.tr)}
\and
Aytekin \c{C}{\i}b{\i}k \thanks{Department of Mathematics, Gazi University, 06550, Ankara (abayram@gazi.edu.tr)}
}
\title{Continuous Data Assimilation for the Double-Diffusive Natural Convection}
\date{\vspace{-5ex}}
\begin{document}

\maketitle
\begin{abstract}
In this study, we analyzed a continuous data assimilation scheme applied on a double-diffusive natural convection model. The algorithm is introduced with a first order backward Euler time scheme along with a finite element discretization in space. The long time stability and convergence results are presented for different options of nudging parameters. Two elaborative numerical test are given in order to confirm the theory and prove the promise of the algorithm.
\end{abstract}

\section{Introduction}
The need of modelling fluid flows arises in many applications of the science and engineering including
weather forecasting, oceanography, and the design of the aircraft.
The success of these models is highly depend on the precision of the initial data. Unfortunately, such data concerning many real-world observation is not entirely be known, or contain error, which is due to the quality of the instrument and how accurately the position of the measurement is known. Therefore, simulations without precise knowledge of this data can cause instabilities, especially over long time intervals, and leads to results which do not match real-life situations. \\
\indent One sophisticated method to handle with this issue is the data assimilation (DA) which enables us to combine an observable data with a numerical method to make the computed solutions better, and closely resemble the current state of the system.
\\
\indent Since Kalman's seminal paper published in 1960, this powerful method has gained increasing popularity in researchers, and various data assimilation algorithms depending on different goals have arisen \cite{CHJ69, Daley91, LL08, BLSZ13, LSZ15}.
Recently, Azouani, Olson and Titi have proposed a promising DA technique, which is known as continuous data assimilation (CDA) or AOT data assimilation, \cite{AOT14, AT14} (see \cite{CKT01, OT03, HOT11} for early ideas in this direction). This technique adds a nudging term which uses the coarse mesh observables into partial differential equations in order to drive the approximate solutions towards the reference solution corresponding to the observed data. The main advantage of the method is to allow us to use classical interpolant operators, which are linear and satisfy approximation properties, since spatial derivatives are not required for coarse mesh observables.\\
We make a note here that Bl{\"{o}}mker and and co-workers applied a similar approach to stochastic differential equations in \cite{BLSZ13}.\\
\indent The application of this technique can be seen in considerably amount of recent research. For such work, we refer to \cite{JST15, ALT16, BM17, FJT15, FLT16, FLT216, FLT316, FLT17, JMT17}. In particular, the nudging technique was applied to the case of noisy data in \cite{BOT15}, and 
to the case in which measurements are obtained discretely in time and may be contaminated by systematic errors in \cite{FMT16}.
In addition, the nudging technique and its variant was applied for  Navier-Stokes equations in \cite{GOT16}, for the Benard convection equations in \cite{AHKMZT17}, and the Kuramoto-Sivashinsky equations in \cite{LT17, LP}. We note that a continuous-in-time Galerkin approximation of the algorithm was studied in \cite{MT18}. Also, recently the method was studied finite element method in space for the 2D Navier-Stokes equations (NSE) in \cite{leocda, LeoCamilleLariosa, LeoCamille}.
\\
\indent The motivation of this report is to apply this recent promising DA technique on a double-diffusive natural convection. Double-diffusive convection is a mechanism, in which the fluid motion occurs due to buoyancy arising from the combination of temperature and concentration gradients. It is related with an increasing number of fields such as, metallurgy, oceanography, contaminant transport, petroleum drilling etc.,\cite{BS89, XQT98, NT15, SGRE14}. The accurate and efficient numerical solutions of these flows are known to be the core of many applications. 
Under the assumption of Boussinesq approximation, the Darcy-Brinkman equations modelling the double-diffusive natural convection pheonomena are given by 
\begin{eqnarray}\label{bous}
\begin {array}{rcll}
\textbf{u}_t -\nu \Delta \textbf{u}+ (\textbf{u}\cdot\nabla)\textbf{u} +Da^{-1}\textbf{u}+ \nabla p &=& (\beta_T T + \beta_C S)\textbf{g} + \bF&
\mathrm{in }\ \Omega, \\
\nabla \cdot \textbf{u}&=& 0& \mathrm{in }\  \Omega,\\
T_t-\kappa \Delta T+\textbf{u}\cdot \nabla T&=&G& \mathrm{in }\ \Omega,\\
S_t-{D_c}\Delta S +\textbf{u}\cdot \nabla S&=&\Phi& \mathrm{in }\ \Omega,
\end{array}
\end{eqnarray}
with appropriate boundary and initial conditions. Here $\textbf{u}$ denotes the velocity, $p$ is the pressure, $T$ is the temperature, $S$ is the concentration. The kinematic viscosity is shown with $\nu >0$, the Darcy number $Da$ and the thermal diffusivity $\kappa > 0$. We have the mass diffusivity ${D_c} > 0$, the vector in the direction of gravitational acceleration is $\textbf{g}$ and the thermal and solutal expansion coefficients are $\beta_T$, $\beta_C$, respectively. \\
%We make use of some dimensionless parameters arising from different non-dimensionilazation procedures are the buoyancy ratio $N=\dfrac{\beta_C \Delta S}{\beta_T \Delta T}$, the Schmidt number $Sc=\dfrac{\nu}{D_c}$, Prandtl number $Pr=\dfrac{\nu}{\kappa}$, the Darcy number $Da=\dfrac{K}{H^2}$, the Lewis number $Le=\dfrac{Sc}{Pr}$, the thermal Rayleigh number $Ra=\dfrac{\textbf{g} \beta_T \Delta T H^3}{\nu \kappa }$, respectively. We denote the height with $H$ and the permeability with $K$. $\Delta T$ and $\Delta S$ are standing for the temperature and concentration differences, respectively.  $\Gamma_N$ is a regular open subset and $\Gamma_D$ is given by $\Gamma_D=\partial \Omega \setminus \Gamma_N $ explicitly.\\
\indent Applying CDA method to \eqref{bous} as in the suggested in \cite{AOT14, AT14, LRZ19}, the model takes the form
\begin{eqnarray}\label{nudge}
\begin {array}{rcll}
\widetilde{\bu}_t -\nu \Delta \widetilde{\bu} + (\widetilde{\bu}\cdot\nabla)\widetilde{\bu} + Da^{-1}\widetilde{\bu} + \nabla p + \mu_1(I_H(\widetilde{\bu} -\bu)) &=& (\beta_T \widetilde{T} + \beta_C \widetilde{S})\textbf{g} + \bF&
\mathrm{in }\ \Omega, \\
\nabla \cdot \widetilde{\bu}&=& 0& \mathrm{in }\  \Omega,\\
\widetilde{T}_t-\kappa \Delta \widetilde{T} + \widetilde{\bu}\cdot \nabla \widetilde{T} + \mu_2(I_H(\widetilde{T} - T))& = &G& \mathrm{in }\ \Omega,\\
\widetilde{S}_t-{D_c}\Delta \widetilde{S} + \widetilde{\bu}\cdot \nabla \widetilde{S} +\mu_3(I_H(\widetilde{S} -S))&=&\Phi& \mathrm{in }\ \Omega.
\end{array}
\end{eqnarray}
Here, the positive scalars $\mu_1, \mu_2, \mu_3$ are knowns as nudging parameters, $I_H$ denotes an appropriate interpolation operator which is linear and satisfies the approximation properties, see \eqref{interp1}-\eqref{interp2} in following section.  $I_H(\widetilde{\bu})$, $I_H(\widetilde{T})$ and $I_H(\widetilde{S})$ are known data obtained from measurements observed at a coarse spatial mesh with mesh size $H$. We assume that these data are continuous in time and error-free. \\
\indent As discussed in \cite{OT08}, numerical instability can be arisen in the nudging terms on the left hand side of \eqref{nudge} for large values of nudging parameters when these terms are treated explicitly. In addition, the use of higher order discretization such as Runge-Kutta-type  methods  or  (fully)  implicit  methods  of  order  greater does not guarantee the same dynamics of the unknowns of \eqref{bous} and \eqref{nudge}, see \cite{OT08}. Because of this reason, the system \eqref{nudge} are discretized by using implicit treatment of these terms, and semi-implicit of the non-linear terms. 
\section{Mathematical Preliminaries} 
In this section, we provide some mathematical preliminaries used  throughout the paper. We study on a domain $\Omega \subset \mathbb{R}^d$, $d=2, 3$ which is a convex polygon or polyhedra. We denote $L^2$ inner product and its induced norm of the scalar valued functions by $(\cdot, \cdot)$ and $\|\cdot\|$, respectively, and $H^{k}(\Omega)$ norm by $\|\cdot\|_{k}$, the $L^{\infty}(\Omega)$ norm by $\|\cdot\|_{\infty}$. We use the same notations for all analogous norms of the vector valued functions. \\

\noindent The natural function spaces for velocity, pressure, temperature and concentration are denoted by
\begin{align*}
\bX & := H_0^1(\Omega)^d= \left \{\bv\in H^1(\Omega)^d: \, \, \bv =0 \hspace{2mm}\text{on} \hspace{2mm} \partial \Omega\right\}, \\
Q&:= L_0^2(\Omega)=\left \{q\in L^2(\Omega)^d: \int\limits_{\Omega}q \mathrm{d}x = 0\right\}, \\
Y & := H_0^1(\Omega)= \left \{\Psi \in H^1(\Omega): \, \,\Psi =0 \hspace{2mm}\text{on} \hspace{2mm} \partial \Omega\right\}, \\
W & := H_0^1(\Omega)= \left \{\chi \in H^1(\Omega): \, \, \chi =0 \hspace{2mm}\text{on} \hspace{2mm} \partial \Omega\right\}. 
\end{align*}
Skew symmetrized trilinear forms for non-linear terms to ensure stability of the numerical method are defined by
\begin{align*}
b_1(\bu, \bv, \bw)& :=\frac{1}{2}\left( (\bu\cdot \nabla\bv, \bw)- (\bu\cdot \nabla\bw, \bv)\right),\hspace{2mm}\forall \, \bu, \bv, \bw \in \bX,\\
b_2(\bu, T, \Psi)& :=\frac{1}{2}\left( (\bu \cdot \nabla T, \Psi)- (\bu\cdot \nabla\Psi, T)\right),\hspace{2mm}\forall \, \bu \in \bX,\,\, T, \Psi \in Y,\\
b_3(\bu, S, \chi)& :=\frac{1}{2}\left( (\bu \cdot \nabla S, \chi)- (\bu\cdot \nabla\chi, S)\right),\hspace{2mm}\forall \, \bu \in \bX,\,\, S, \chi \in W.
\end{align*}
We need some important estimates for $(\bu\cdot \nabla\bv, \bw)1$ that we will employ in subsequent sections, \cite{Lay08}. Analogous estimates also hold for the skew-symmetric operators.
\begin{lemma} \label{trilinearbound}
For $\bu, \bv, \bw \in \bX$, and also $\bv, \nabla\bv\in \bL^{\infty}(\Omega)$ for \eqref{tribound1}, the term $(\bu\cdot\nabla \bv, \bw)$ is bounded by
\begin{eqnarray}
(\bu\cdot \nabla\bv, \bw) & \leq & \|\bu\|\|\nabla \bv\|_{\infty}\|\bw\| , \label{tribound1}\\
(\bu\cdot \nabla\bv, \bw) & \leq & C \|\nabla \bu\|\|\nabla \bv\|\|\nabla \bw\|. \label{tribound2}
\end{eqnarray}
\end{lemma}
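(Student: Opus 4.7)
The proof splits cleanly into two independent pieces, one for each bound, and neither requires machinery beyond Hölder's inequality, Sobolev embedding, and Poincar\'e's inequality on the convex domain $\Omega \subset \mathbb{R}^d$, $d=2,3$.

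For \eqref{tribound1}, the plan is essentially direct: apply H\"older's inequality to the integral $\int_{\Omega} (\bu\cdot\nabla\bv)\,\bw\, \mathrm{d}x$ with exponent triple $(2,\infty,2)$, pulling $\nabla\bv$ out in its $L^{\infty}(\Omega)$ norm and keeping $\bu$ and $\bw$ in $L^2$. Since $\nabla\bv \in \bL^{\infty}(\Omega)$ by hypothesis, this immediately produces $\|\bu\|\,\|\nabla\bv\|_{\infty}\,\|\bw\|$. No Sobolev theory is needed here.

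For \eqref{tribound2}, I would apply H\"older with exponents $(p,2,q)$ satisfying $1/p + 1/q = 1/2$, leaving $\nabla\bv$ in $L^2$, and then invoke a Sobolev embedding of the form $H_0^1(\Omega) \hookrightarrow L^r(\Omega)$ together with Poincar\'e's inequality to replace the $L^p$ norm of $\bu$ and the $L^q$ norm of $\bw$ by $\|\nabla \bu\|$ and $\|\nabla \bw\|$ respectively. The natural choice in three dimensions is $p=6,\ q=3$, exploiting $\|\bu\|_{L^6}\le C\|\nabla\bu\|$ and $\|\bw\|_{L^3}\le C\|\nabla\bw\|$; in two dimensions the simpler choice $p=q=4$ works via $H_0^1(\Omega)\hookrightarrow L^4(\Omega)$. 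In either case the constant $C$ depends only on $\Omega$ and $d$ through the embedding and Poincar\'e constants.

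The main organizational point, rather than a genuine obstacle, is selecting the exponent triple so that a single constant $C=C(\Omega,d)$ covers both spatial dimensions $d\in\{2,3\}$; convexity of $\Omega$ guarantees the required Sobolev embeddings, so no additional regularity hypotheses are needed. Finally, since the skew-symmetric forms $b_1,b_2,b_3$ are each a sum of two trilinear terms of the form $(\bu\cdot\nabla\cdot,\cdot)$ with the last two arguments interchanged, the same estimates transfer to $b_i$ by the triangle inequality after an analogous application of the Lemma to the second summand.
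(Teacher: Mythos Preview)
Your proposal is correct and follows essentially the same route as the paper: H\"older with exponents $(2,\infty,2)$ for \eqref{tribound1}, and for \eqref{tribound2} H\"older keeping $\nabla\bv$ in $L^2$, followed by an embedding-plus-Poincar\'e argument to control the remaining factors by gradients. The only cosmetic difference is that the paper uses the Ladyzhenskaya inequality (i.e.\ the $L^4$ bound) together with Poincar\'e in both dimensions, whereas you invoke the general Sobolev embedding and in $d=3$ opt for the $(6,2,3)$ split instead of $(4,2,4)$; both choices yield the stated estimate with a constant depending only on $\Omega$.
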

\begin{proof}
The first of these bounds can be proved by applying the generalized H\"{o}lder Inequality with $p=2,\, q=\infty, \, r=2$. The second bound follows from the generalized H\"{o}lder Inequality with $p=2,\, q=4, \, r=2$, the Ladyzhenskaya Inequality together with the Poincar{\'{e}}-Friedrichs' Inequality, see \cite{Lay08}. 
\end{proof}
\noindent We frequently call the Poincar{\'{e}}-Friedrich Inequality; there exists a constant $C_{PF}:= C_{PF}(\Omega)$ such that for all $\varphi \in W $, (and $\bm \varphi \in \bX$) 
$$ \|\varphi\|\leq C_{PF}\|\nabla \varphi \|, $$
and Young's Inequality; for any $\varepsilon>0$
\begin{align*}
a\,b\leq \frac{\varepsilon}{p}a^{p} +\frac{\varepsilon^{-q/p}}{q}b^{q},\,\, a,b\geq 0
\end{align*}
where $\frac{1}{p}+ \frac{1}{q}=1$ with $p,q\in [1,\infty).$\\

\noindent We assume a regular, conforming mesh $\tau_h$, with maximum element diameter $h$, and associated velocity-pressure-temperature finite element (FE) spaces $\bX_h\subset \bX$, $Q_h\subset Q$, $Y_h\subset Y$ and $W_h\subset W$ satisfying approximation properties of piecewise polynomials of local degree $k, k-1$, $k$ and $k$ respectively, \cite{GR86}:
\begin{eqnarray*}
    \inf_{\bv_h\in \bX_{h}}\left(\| \bu - \bv_h \| + h \|\nabla( \bu - \bv_h )\| \right) & \le & C h^{k+1} \| \bu \|_{k+1},\;\; \bu \in
    \bH^{k+1}(\Omega),\\
    \inf_{q_h \in Q_{h}} \| p - q_h \| & \le & C h^{k} \| p \|_{k},\;\;  \quad \quad p \in
    H^{k}(\Omega),\\
    \inf_{\omega_h\in Y_{h}}\left(\| T - \Psi_h \| +  h \|\nabla( T - \Psi_h )\| \right)& \le & C h^{k+1} \| T \|_{k+1},\;\; T \in
    H^{k+1}(\Omega),\\
    \inf_{\chi_h\in W_{h}}\left(\| S - \chi_h \| +  h \|\nabla( S - \chi_h )\| \right)& \le & C h^{k+1} \| S \|_{k+1},\;\; S \in
    H^{k+1}(\Omega).
\end{eqnarray*}
The finite element spaces for velocity-pressure are assumed to satisfy the discrete inf-sup condition
for the stability of pressure, i.e., there is a constant $\beta$ independent of the mesh size h such that
$$\inf\limits_{q_h\in Q_h}\sup\limits_{\bv_h\in \bX_h}\frac{(q_h, \nabla\cdot\bv_h)}{\|\nabla\bv_h\|}\geq \beta>0.$$ 
The discretely divergence-free subspace of $\bX_h$ will be denoted by
\begin{eqnarray*}
\bV_h= \{ \bv_h \in \bX_h:\,\, (q_h, \nabla \cdot \bv_h)=0 \ \ \forall q_h \in Q_h \}\, .
\end{eqnarray*}
We also need a regular conforming mesh $\tau_H$, and function spaces denoted by $\bX_H$, $Y_H$ and $W_H$ on this mesh. These spaces are necessary for measurement data interpolation. Denoting the coarse mesh interpolation operator $I_H$, we assume that the following bounds are satisfied:
\begin{align}
\|I_{H}(\varphi)-\varphi\|& \leq C H \|\nabla \varphi\|,\label{interp1}\\
\|I_{H}(\varphi)\|& \leq C \|\varphi\|,\label{interp2}
\end{align}
Requiring these bound allows us mathematical theory for the finite element analysis. We note here that we use the same notation of this interpolation operator for the vector and scaler valued functions.\\

\noindent We also introduce the notation $t^{n+1}:= (n+1)\,\Delta t$, where $\Delta t$ is a chosen time-step, and the following discrete time norms:
\begin{align*}
\||v|\|_{\infty, k}:=\max\limits_{0\leq n \leq N}\|v(t^n,\cdot)\|_{k}, \hspace{3mm} \text{and} \hspace{3mm}\||v|\|_{m, k}:=\bigg(\Delta t\, \sum\limits_{n=0}^{N-1} \|v(t, \cdot)\|_{k}^m  \bigg)^{1/m}.
\end{align*}

\section{Stability and Convergence Analysis}
We devote this section to the stability and convergence analysis of Algorithm~\ref{algbe}. We first show that  solutions of the proposed algorithm are stable at all time levels without time step restriction. Then we prove the convergence of discrete solutions to true solutions of \eqref{bous}.
\begin{alg}\label{algbe}
Let initial conditions $\widetilde{\bu}_h^{0}, \widetilde{T}_h^{0},\widetilde{S}_h^{0}$, and forcing terms be given. Select a time step $\Delta t>0$. For each $n=0,1,2,...$, find $\left(\widetilde{\bu}_h^{n+1}, p_h^{n+1}, \widetilde{T}_h^{n+1}, \widetilde{S}_h^{n+1}\right)\in (\bX_h, Q_h, Y_h, Y_h) \,$ such that it holds: $\forall \, \left(\bv_h, q_h, \Psi_h, \chi_h\right)\in (\bX_h, Q_h, Y_h, Y_h)$
\begin{gather}
\frac{1}{\Delta t}\big(\widetilde{\bu}_h^{n+1}- \widetilde{\bu}_h^{n}, \bv_h\big) + b_1 \,(\widetilde{\bu}_h^{n},\, \widetilde{\bu}_h^{\,n+1}, \,\bv_h)  - (p_h^{n+1}, \nabla\cdot \bv_h) + \nu\big(\nabla\widetilde{\bu}_h^{n+1}, \nabla \bv_h\big)+ D_a^{-1}(\widetilde{\bu}_h^{n+1}, \bv_h) \nonumber\\
\qquad \qquad  \qquad  \quad \quad \quad \quad \mu_1(I_H(\widetilde{\bu}_h^{n+1}- \bu^{n+1}), I_H({\bv_h})) = ((\beta_T \widetilde{T}_h^n + \beta_c \widetilde{S}_h^n)\bm g, \bv_h) + (\bF^{n+1}, \bv_h),\label{disc1}
\\
\qquad \qquad  \qquad \qquad \qquad \qquad \qquad \qquad \qquad\qquad \qquad \qquad \qquad \qquad \qquad \qquad ( \nabla\cdot\widetilde{\bu}_h^{n+1},\, q_h)  = 0,\\
\frac{1}{\Delta t}\big(\widetilde{T}_h^{n+1} - \widetilde{T}_h^{n}, \Psi_h\big) +  b_2\,(\widetilde{\bu}_h^{n+1}, \widetilde{T}_h^{n+1}, \, \Psi_h)+ \kappa\big(\nabla\widetilde{T}_h^{n+1}, \nabla \Psi_h\big) + \mu_2(I_H(\widetilde{T}_h^{n+1}- T^{n+1}), I_H({\Psi_h}))\nonumber\\
\qquad \qquad \qquad \qquad \qquad \qquad \qquad \qquad \qquad \quad \quad \qquad \qquad \qquad \qquad \qquad \qquad \qquad = (G^{n+1}, \Psi_h),\label{disc2}
\\
\frac{1}{\Delta t}\big(\widetilde{S}_h^{n+1} - \widetilde{S}_h^{n}, \chi_h\big) +  b_3\,(\widetilde{\bu}_h^{n+1}, \widetilde{S}_h^{n+1}, \, \chi_h) + D_c\big(\nabla\widetilde{S}_h^{n+1}, \nabla \chi_h\big) +\mu_3(I_H(\widetilde{S}_h^{n+1}- S^{n+1}), I_H({\chi_h}))\nonumber\\
\qquad \qquad \qquad \qquad \qquad \qquad \qquad \qquad \qquad \quad \qquad \qquad \quad \qquad \qquad \quad \qquad \qquad \quad = (\Phi^{n+1}, \chi_h)\label{disc3}.
\end{gather}
\end{alg}
\begin{remark}
We emphasize here that the proposed algorithm herein is consistent with the requirement of \cite{OT08} since the right hand side of the algorithm is evaluated only per time step. This is due to explicit-implicit treatment of the non-linear terms, i.e., no requirement of the multi step methods in the discretization of these terms such as Newton method. In addition we notice that $I_H(\bu^{n+1})$, $I_H(T^{n+1})$ and $I_H(S^{n+1})$ are taken the most recent measurable data, not future data which is unmeasured.
\end{remark}
\subsection{Long Time $L^2$-Stability}
\begin{lemma}
Assume that $\bu\in L^{\infty}(0,\infty; {\bm L^2})$, $T, S\in L^{\infty}(0,\infty; {L^2})$, and $\bF \in L^{\infty}(0,\infty; {\bm H^{-1}})$,  $G, \Phi \in L^{\infty}(0,\infty; {H^{-1}})$. Let initial conditions $\widetilde{\bu}_h^{0}, \widetilde{T}_h^{0},\widetilde{S}_h^{0}$ be given. Then, solutions $\widetilde{\bu}_h^{n+1}, \widetilde{T}_h^{n+1},\widetilde{S}_h^{n+1}$ of Algorithm~\eqref{algbe} are stable at all time levels, and satisfy the bounds: for any $\Delta t>0$,
\begin{align*}
&\|\widetilde{\bu}_h^{n+1}\|^2 + \frac{\nu\Delta t}{4}\|\nabla \widetilde{\bu}_h^{n+1}\|^2  + \frac{D_a^{-1}\Delta t}{4}\|\widetilde{\bu}_h^{n+1}\|^2\nonumber\\
&\leq {(1+\alpha_u)^{-(n+1)}} \left(\|\widetilde{\bu}_h^{0}\|^2 +\frac{\nu\Delta t}{4}\|\nabla \widetilde{\bu}_h^{0}\|^2 +\frac{D_a^{-1}\Delta t}{4}\|\widetilde{\bu}_h^{0}\|^2  \right)\nonumber\\
 &\,\,\,\,\, + \max\left\{\frac{4C_{PF}^2 D_a}{\nu + C_{PF}^2 D_a^{-1}}, \,  D_a \Delta t \right \} \left(\beta_T^2\max\left\{ \|\widetilde{T}_h^{0}\|^2, K_T\right\} + \beta_c^2\max\left\{ \|\widetilde{S}_h^{0}\|^2, K_S\right\}  \right)\|g\|_{L^{\infty}}^2 \nonumber\\
&\,\,\,\,\, + \max\left\{\frac{4C_{PF}^2 \nu^{-1}}{\nu + C_{PF}^2 D_a^{-1}},  \,\nu^{-1}\Delta t \right \} \|\bF\|_{L^{\infty}(0,\infty; {\bm H^{-1}})}^2 + \max\left\{\frac{4C_{PF}^2 \mu_1}{\nu + C_{PF}^2 D_a^{-1}}, \, \mu_1\Delta t \right \} \|\bu\|^2_{L^{\infty}(0,\infty;{\bm L^2})},
\end{align*}
and
\begin{multline*}
\|\widetilde{T}_h^{n+1}\|^2  + \frac{\kappa \Delta t}{4}\|\nabla \widetilde{T}_h^{n+1}\|^2\\
\leq (1+ \lambda_T)^{-(n+1)}\left( \|\widetilde{T}_h^{0}\|^2 +  \kappa \Delta t\|\nabla \widetilde{T}_h^{0}\|^2\right)
+\max \left \{{4 \mu_2 C_{PF}^2 \kappa^{-1}}, \,\mu_2 \Delta t \right \}\|T\|^2_{L^{\infty}(0,\infty; L^2)} \\
+ 
\max \left \{4 C_{PF}^2\kappa^{-2}, \, \kappa^{-1}\Delta t \right \}\|G\|_{L^{\infty}(0,\infty; H^{-1}(\Omega))}^2 =:K_T,
\end{multline*}
and
\begin{multline*}
\|\widetilde{S}_h^{n+1}\|^2  + \frac{D_c\Delta t}{4}\|\nabla \widetilde{S}_h^{n+1}\|^2\\
\leq (1+ \lambda_S)^{-(n+1)}\left( \|\widetilde{S}_h^{0}\|^2 
+ \frac{D_c\Delta t}{4}\|\nabla \widetilde{S}_h^{0}\|^2\right)
+\max \left \{{4 \mu_3 C_{PF}^2 D_c^{-1}}, \,\mu_3 \Delta t \right \}\|S\|^2_{L^{\infty}(0,\infty; L^2)} \\
+ 
\max \left \{4 C_{PF}^2 D_c^{-2}, \, D_c^{-1}\Delta t \right \}\|\Phi\|_{L^{\infty}(0,\infty; H^{-1}(\Omega))}^2=:K_S,
\end{multline*}
where $ \lambda_u:=\min \left \{\frac{\nu \Delta t}{4 C_{PF}^2} + \frac{D_a^{-1} \Delta t}{4}, 1 \right \}$, $ \lambda_T:=\min \left \{\frac{\kappa \Delta t}{4 C_{PF}^2}, 1 \right \}$ and  $ \lambda_S:=\min \left \{\frac{D_c \Delta t}{4 C_{PF}^2}, 1 \right \}$.
\end{lemma}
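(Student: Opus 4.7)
The strategy is to decouple the energy estimates: the temperature and concentration equations decouple from the velocity once one notes that the advecting field $\widetilde{\bu}_h^{n+1}$ drops out of the skew-symmetric trilinear form upon testing \eqref{disc2}--\eqref{disc3} against $\widetilde T_h^{n+1}$ (resp.\ $\widetilde S_h^{n+1}$). I would therefore first establish the $\widetilde T_h$ and $\widetilde S_h$ bounds in isolation, and only then feed them into the buoyancy term of the velocity estimate. All three estimates rely on the same three tools: polarization of the discrete time derivative, Cauchy--Schwarz/Young combined with the interpolation bound \eqref{interp2} to dominate the nudging cross-term, and a Poincaré-absorption step that inflates a coercive $\|\cdot\|^2$ on the left into $(1+\lambda)\|\cdot\|^2$, thereby extracting the geometric decay.

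For the temperature, test \eqref{disc2} with $\Psi_h=\widetilde T_h^{n+1}$. Skew-symmetry kills $b_2$, polarization produces $\tfrac{1}{2\Delta t}(\|\widetilde T_h^{n+1}\|^2-\|\widetilde T_h^n\|^2)$ plus a nonnegative jump term I discard, and the nudging term is rearranged as
\[
\mu_2\bigl(I_H(\widetilde T_h^{n+1}-T^{n+1}),I_H(\widetilde T_h^{n+1})\bigr)=\mu_2\|I_H(\widetilde T_h^{n+1})\|^2-\mu_2\bigl(I_H(T^{n+1}),I_H(\widetilde T_h^{n+1})\bigr),
\]
whose cross term I would dominate by Cauchy--Schwarz, Young and \eqref{interp2}, leaving a nonnegative multiple of $\|I_H(\widetilde T_h^{n+1})\|^2$ on the left and a $\mu_2\|T^{n+1}\|^2$ contribution on the right. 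The forcing is controlled in the standard way against $\tfrac{\kappa}{4}\|\nabla \widetilde T_h^{n+1}\|^2$. After splitting the surviving dissipation, retaining $\tfrac{\kappa\Delta t}{4}\|\nabla \widetilde T_h^{n+1}\|^2$ as part of the step-$(n{+}1)$ energy and using the remainder together with Poincaré to convert $\|\widetilde T_h^{n+1}\|^2$ into a multiplicative factor $1+\lambda_T$, I arrive at the recurrence
\[
(1+\lambda_T)\Bigl(\|\widetilde T_h^{n+1}\|^2+\tfrac{\kappa\Delta t}{4}\|\nabla\widetilde T_h^{n+1}\|^2\Bigr)\leq \|\widetilde T_h^n\|^2+\tfrac{\kappa\Delta t}{4}\|\nabla \widetilde T_h^n\|^2+R_T,
\]
with $R_T$ independent of $n$. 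Iterating this geometric recurrence produces the exponentially decaying initial-data contribution $(1+\lambda_T)^{-(n+1)}(\cdots)$ plus a uniformly bounded tail $R_T/\lambda_T$; the $\max\{\cdot,\cdot\}$-form constants stated in the lemma come from $\lambda_T^{-1}=\max\{4C_{PF}^2/(\kappa\Delta t),1\}$ split across the two cases in the definition of $\lambda_T$. The concentration bound is the identical argument on \eqref{disc3} under the substitution $(\kappa,\mu_2,G,T)\leftarrow(D_c,\mu_3,\Phi,S)$.

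For the velocity I would test \eqref{disc1} with $\bv_h=\widetilde{\bu}_h^{n+1}$: $b_1$, the pressure and the divergence constraint all drop, polarization is applied, and the nudging/forcing terms are treated exactly as above, producing contributions $\mu_1\|\bu\|^2_{L^{\infty}(0,\infty;\bm L^2)}$ and $\nu^{-1}\|\bF\|^2_{L^{\infty}(0,\infty;\bm H^{-1})}$ on the right. The only genuinely new piece is the buoyancy
\[
\bigl((\beta_T\widetilde T_h^n+\beta_c\widetilde S_h^n)\bm g,\widetilde{\bu}_h^{n+1}\bigr)\leq \|\bm g\|_\infty\bigl(\beta_T\|\widetilde T_h^n\|+\beta_c\|\widetilde S_h^n\|\bigr)\|\widetilde{\bu}_h^{n+1}\|,
\]
which I would bound using the already-proved stability of $\widetilde T_h^n,\widetilde S_h^n$; the uniform-in-$n$ envelope $\max\{\|\widetilde T_h^0\|^2,K_T\}$ covers the case $n=0$ by the initial condition and $n\geq 1$ by $K_T$. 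A Young inequality then absorbs $\|\widetilde{\bu}_h^{n+1}\|^2$ into the coercive combination $\nu\|\nabla\widetilde{\bu}_h^{n+1}\|^2+D_a^{-1}\|\widetilde{\bu}_h^{n+1}\|^2$, whose effective Poincaré-combined coercivity $\nu+C_{PF}^2 D_a^{-1}$ is precisely the denominator appearing in the lemma's constants, and the same Poincaré-absorption/geometric-iteration argument yields the stated velocity bound with $\lambda_u=\min\{\nu\Delta t/(4C_{PF}^2)+D_a^{-1}\Delta t/4,\,1\}$ (I expect the symbol $\alpha_u$ in the statement to be a typographic slip for this $\lambda_u$).

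The only real obstacle I anticipate is the bookkeeping: each coercive contribution must be partitioned precisely among (i) the quantity retained as the step-$(n{+}1)$ energy on the left, (ii) the part devoted to Young's bounds for forcing, nudging and buoyancy, and (iii) the part spent on the Poincaré-absorption that produces the $(1+\lambda)$ factor, so that the resulting constants match those in the statement. There is no dynamical subtlety --- only careful accounting and case analysis on which branch of $\min/\max$ is active in each $\lambda$.
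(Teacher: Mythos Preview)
Your proposal is correct and follows essentially the same route as the paper: test each equation with its own unknown so the skew-symmetric trilinear forms vanish, control the nudging cross-term via Cauchy--Schwarz/Young with \eqref{interp2}, add $\tfrac{\kappa\Delta t}{4}\|\nabla\widetilde T_h^{n}\|^2$ (and the analogous quantities) to both sides to build the composite energy, use Poincar\'e to extract the $(1+\lambda)$ factor, and iterate geometrically---first for $T$ and $S$, then feeding those uniform bounds into the buoyancy term of the velocity estimate. Your reading of $\alpha_u$ as a typo for $\lambda_u$ is also correct.
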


\begin{proof}
We first obtain bounds on temperature and concentration solution of Algorithm~\ref{algbe}. Setting $\Psi_h= \widetilde{T}_h^{n+1}$ in \eqref{disc2} and $\chi_h= \widetilde{S}_h^{n+1}$ in \eqref{disc3} vanishes non-linear terms. Using the identity $$2(a-b,a)= a^2 -b^2 +(a-b)^2$$ produces
\begin{multline*}
\frac{1}{2 \Delta t}\big(\|\widetilde{T}_h^{n+1}\|^2 - \|\widetilde{T}_h^{n}\|^2 + \|\widetilde{T}_h^{n+1}-\widetilde{T}_h^{n}\|^2\big) +\kappa\|\nabla \widetilde{T}_h^{n+1}\|^2 + \mu_2\|I_H(\widetilde{T}_h^{n+1})\|^2 \\
= \mu_2\left(I_H(T^{n+1}), I_H({\widetilde{T}_h^{n+1}})\right) + \left(G^{n+1}, \widetilde{T}_h^{n+1}\right),
\end{multline*}
and
\begin{multline*}
\frac{1}{2 \Delta t}\big(\|\widetilde{S}_h^{n+1}\|^2 - \|\widetilde{S}_h^{n}\|^2 + \|\widetilde{S}_h^{n+1}-\widetilde{S}_h^{n}\|^2\big) + D_c\|\nabla \widetilde{S}_h^{n+1}\|^2 + \mu_3\|I_H(\widetilde{S}_h^{n+1})\|^2 \\
= \mu_3\left(I_H(S^{n+1}), I_H({\widetilde{S}_h^{n+1}})\right) + \left(\Phi^{n+1}, \widetilde{S}_h^{n+1}\right).
\end{multline*}
Using Cauchy-Schwarz, the interpolation property \eqref{interp2} and Young's inequality on the right hand side yields
\begin{align*}
\frac{1}{2 \Delta t}\big(\|\widetilde{T}_h^{n+1}\|^2 - \|\widetilde{T}_h^{n}\|^2 & + \|\widetilde{T}_h^{n+1}-\widetilde{T}_h^{n}\|^2\big) +\kappa\|\nabla \widetilde{T}_h^{n+1}\|^2 + \mu_2\|I_H(\widetilde{T}_h^{n+1})\|^2\\
&\leq \mu_2\|I_H(T^{n+1})\| \| I_H({\widetilde{T}_h^{n+1}})\| + \|G^{n+1}\|_{-1} \|\nabla\widetilde{T}_h^{n+1}\|\\
&\leq \mu_2\|T^{n+1}\| \| I_H({\widetilde{T}_h^{n+1}})\| + \|G^{n+1}\|_{-1} \|\nabla\widetilde{T}_h^{n+1}\|\\
&\leq \frac{\mu_2}{2}\|T^{n+1}\|^2 + \frac{\mu_2}{2}\| I_H({\widetilde{T}_h^{n+1}})\|^2 + 
\frac{\kappa^{-1}}{2}\|G^{n+1}\|_{-1}^2 + \frac{\kappa}{2}\| \nabla{\widetilde{T}_h^{n+1}}\|^2,
\end{align*}
and
\begin{align*}
\frac{1}{2 \Delta t}\big(\|\widetilde{S}_h^{n+1}\|^2 - \|\widetilde{S}_h^{n}\|^2 & + \|\widetilde{S}_h^{n+1}-\widetilde{S}_h^{n}\|^2\big) + D_c\|\nabla \widetilde{S}_h^{n+1}\|^2 + \mu_3\|I_H(\widetilde{S}_h^{n+1})\|^2\\
&\leq \mu_3\|I_H(S^{n+1})\| \| I_H({\widetilde{S}_h^{n+1}})\| + \|\Phi^{n+1}\|_{-1} \|\nabla\widetilde{S}_h^{n+1}\|\\
&\leq \mu_3\|S^{n+1}\| \| I_H({\widetilde{S}_h^{n+1}})\| + \|\Phi^{n+1}\|_{-1} \|\nabla\widetilde{S}_h^{n+1}\|\\
&\leq \frac{\mu_3}{2}\|S^{n+1}\|^2 + \frac{\mu_3}{2}\| I_H({\widetilde{S}_h^{n+1}})\|^2 + 
\frac{D_c^{-1}}{2}\|\Phi^{n+1}\|_{-1}^2 + \frac{D_c}{2}\| \nabla{\widetilde{S}_h^{n+1}}\|^2.
\end{align*}
Rearranging terms, multiplying by $2\Delta t$ and dropping the non-negative third and fifth terms produces
%\begin{align*}
%\frac{1}{2 \Delta t}\big(\|\widetilde{T}_h^{n+1}\|^2 - \|\widetilde{T}_h^{n}\|^2  + \|\widetilde{T}_h^{n+1}-\widetilde{T}_h^{n}\|^2\big) + \frac{\kappa}{2}\|\nabla \widetilde{T}_h^{n+1}\|^2 & + \frac{\mu_2}{2}\|I_H(\widetilde{T}_h^{n+1})\|^2\\
%&\leq \frac{\mu_2}{2}\|T^{n+1}\|^2 + 
%\frac{\kappa^{-1}}{2}\|G^{n+1}\|_{-1}^2,
%\end{align*}
%and
%\begin{align*}
%\frac{1}{2 \Delta t}\big(\|\widetilde{C}_h^{n+1}\|^2 - \|\widetilde{C}_h^{n}\|^2  + \|\widetilde{C}_h^{n+1}-\widetilde{C}_h^{n}\|^2\big) + \frac{D_c}{2}\|\nabla \widetilde{C}_h^{n+1}\|^2 & + \mu_3\|I_H(\widetilde{C}_h^{n+1})\|^2\\
%&\leq \frac{\mu_3}{2}\|C^{n+1}\|^2
%+ 
%\frac{D_c^{-1}}{2}\|\Phi^{n+1}\|_{-1}^2 .
%\end{align*}
\begin{align}
\|\widetilde{T}_h^{n+1}\|^2 - \|\widetilde{T}_h^{n}\|^2  + \kappa \Delta t\|\nabla \widetilde{T}_h^{n+1}\|^2 \leq \mu_2 \Delta t\|T^{n+1}\|^2 + 
\kappa^{-1} \Delta t\|G^{n+1}\|_{-1}^2\label{stabtemp1},
\end{align}
and
\begin{align}
\|\widetilde{S}_h^{n+1}\|^2 - \|\widetilde{S}_h^{n}\|^2  + {D_c}\Delta t\|\nabla \widetilde{S}_h^{n+1}\|^2 \leq \mu_3 \Delta t\|S^{n+1}\|^2
+ D_c^{-1}\Delta t\|\Phi^{n+1}\|_{-1}^2\label{stabcons1}.
\end{align}
Now add $\frac{\kappa\Delta t}{4} \|\nabla\widetilde{T}_h^{n}\|^2$ to both side of \eqref{stabtemp1} and $\frac{\kappa\Delta t}{4} \|\nabla\widetilde{S}_h^{n}\|^2$ to \eqref{stabcons1}. Rearranging terms produces:
\begin{align}
\|\widetilde{T}_h^{n+1}\|^2 + \frac{\kappa \Delta t}{4}\|\nabla \widetilde{T}_h^{n+1}\|^2 + \frac{\kappa \Delta t}{4}\left(\|\nabla \widetilde{T}_h^{n+1}\|^2 + \|\nabla \widetilde{T}_h^{n}\|^2\right) + \frac{\kappa \Delta t}{2}\|\nabla \widetilde{T}_h^{n+1}\|^2\nonumber \\
\leq \|\widetilde{T}_h^{n}\|^2 + \frac{\kappa \Delta t}{4}\|\nabla \widetilde{T}_h^{n}\|^2 + \mu_2 \Delta t\|T^{n+1}\|^2 + 
\kappa^{-1} \Delta t\|G^{n+1}\|_{-1}^2\label{stabtemp2},
\end{align}
and
\begin{align}
\|\widetilde{S}_h^{n+1}\|^2 + \frac{D_c \Delta t}{4}\|\nabla \widetilde{S}_h^{n+1}\|^2 + \frac{D_c \Delta t}{4}\left(\|\nabla \widetilde{S}_h^{n+1}\|^2 + \|\nabla \widetilde{S}_h^{n}\|^2\right) + \frac{D_c \Delta t}{2}\|\nabla \widetilde{S}_h^{n+1}\|^2 \nonumber\\
\leq \|\widetilde{S}_h^{n}\|^2 + \frac{D_c \Delta t}{4}\|\nabla \widetilde{S}_h^{n}\|^2 + \mu_3 \Delta t\|S^{n+1}\|^2 + 
D_c^{-1} \Delta t\|\Phi^{n+1}\|_{-1}^2\label{stabcons2}.
\end{align}
The terms on the left hand side of \eqref{stabtemp2} is estimated below by using the Poincar{\'{e}}-Friedrich inequality as follows:
\begin{align}
\frac{\kappa \Delta t}{4}(\|\nabla \widetilde{T}_h^{n+1}\|^2 & + \|\nabla \widetilde{T}_h^{n}\|^2) + \frac{\kappa \Delta t}{2}\|\nabla \widetilde{T}_h^{n+1}\|^2 \nonumber\\
&
\geq \frac{\kappa \Delta t}{4 C_{PF}^2}(\| \widetilde{T}_h^{n+1}\|^2 + \|\widetilde{T}_h^{n}\|^2) + \frac{\kappa \Delta t}{2}\|\nabla \widetilde{T}_h^{n+1}\|^2\nonumber\\
&
\geq \frac{\kappa \Delta t}{4 C_{PF}^2}\| \widetilde{T}_h^{n+1}\|^2 + \frac{\kappa \Delta t}{4}\|\nabla \widetilde{T}_h^{n+1}\|^2\nonumber\\
&
\geq \min \left \{\frac{\kappa \Delta t}{4 C_{PF}^2}, 1 \right \} \left(\| \widetilde{T}_h^{n+1}\|^2 + \frac{\kappa \Delta t}{4}\|\nabla \widetilde{T}_h^{n+1}\|^2\right)\label{stabtemp3}.
\end{align}
Plugging this estimate into \eqref{stabtemp2} yields
\begin{multline}
(1+ \lambda_T)\left(\|\widetilde{T}_h^{n+1}\|^2  + \frac{\kappa \Delta t}{4}\|\nabla \widetilde{T}_h^{n+1}\|^2\right) \\
\leq \|\widetilde{T}_h^{n}\|^2 + \frac{\kappa \Delta t}{4}\|\nabla \widetilde{T}_h^{n}\|^2+ \mu_2 \Delta t\|T\|^2_{L^{\infty}(0,\infty; L^2)} + 
\kappa^{-1} \Delta t\|G\|_{L^{\infty}(0,\infty; H^{-1}(\Omega))}^2 \label{stabtemp4},
\end{multline}
where $ \lambda_T:=\min \left \{\frac{\kappa \Delta t}{4 C_{PF}^2}, 1 \right \}$. Applying similar arguments for the terms in \eqref{stabcons2} leads to
\begin{align}
\frac{D_c \Delta t}{4}(\|\nabla \widetilde{S}_h^{n+1}\|^2 + \|\nabla \widetilde{S}_h^{n}\|^2) & + \frac{D_c \Delta t}{2}\|\nabla \widetilde{S}_h^{n+1}\|^2 \\
&\geq \min \left \{\frac{D_c \Delta t}{4 C_{PF}^2}, 1 \right \} \left(\| \widetilde{S}_h^{n+1}\|^2 + \frac{D_c \Delta t}{4}\|\nabla \widetilde{S}_h^{n+1}\|^2\right)\nonumber,
\end{align}
and inserting this in \eqref{stabcons2} produces
\begin{multline}
(1+ \lambda_S)\left(\|\widetilde{S}_h^{n+1}\|^2  + \frac{D_c \Delta t}{4}\|\nabla \widetilde{S}_h^{n+1}\|^2\right)\\
 \leq \|\widetilde{S}_h^{n}\|^2 + \frac{D_c \Delta t}{4}\|\nabla \widetilde{S}_h^{n}\|^2 + \mu_3 \Delta t\|S\|^2_{L^{\infty}(0, \infty; L^2)} + 
D_c^{-1} \Delta t\|\Phi\|_{L^{\infty}(0,\infty; H^{-1}(\Omega))}^2\label{stabcons3},
\end{multline}
where $\lambda_S:=\min \left \{\frac{D_c \Delta t}{4 C_{PF}^2}, 1 \right \}$. 
Using induction on \eqref{stabtemp4} and \eqref{stabcons3} produces stability estimates on the discrete temperature and concentration solutions of Algorithm~\ref{algbe}. \\

We now obtain the desired stability estimate on discrete velocity solution. Letting $\bv_h=\widetilde{\bu}_h^{n+1}$ in \eqref{disc1}, and using $$2(a-b,a)= a^2 -b^2 +(a-b)^2,$$
and applying Cauchy-Schwarz, \eqref{interp2} and Young's inequality to the right hand side yields
\begin{multline}
\frac{1}{2 \Delta t}\big(\|\widetilde{\bu}_h^{n+1}\|^2 - \|\widetilde{\bu}_h^{n}\|^2  + \|\widetilde{\bu}_h^{n+1}-\widetilde{\bu}_h^{n}\|^2\big) + \nu\|\nabla \widetilde{\bu}_h^{n+1}\|^2 + \mu_1\|I_H(\widetilde{\bu}_h^{n+1})\|^2 + D_a^{-1}\|\widetilde{\bu}_h^{n+1}\|^2 \\
\leq \frac{D_a}{2}\left(\beta_T^2 \|\widetilde{T}_h^{n}\|^2 + \beta_c^2\|\widetilde{S}_h^{n}\|^2  \right)\|g\|_{L^{\infty}}^2 + \frac{D_a^{-1}}{2}\|\widetilde{\bu}_h^{n+1}\|^2
+ 
\frac{\nu^{-1}}{2}\|\bF^{n+1}\|_{-1}^2 + \frac{\nu}{2}\| \nabla{\widetilde{\bu}_h^{n+1}}\|^2\\
 + \frac{\mu_1}{2}\|\bu^{n+1}\|^2 + \frac{\mu_1}{2}\| I_H({\widetilde{\bu}_h^{n+1}})\|^2
\label{stabvel1}.
\end{multline}
Rearranging terms, multiplying by $2\Delta t$ and dropping the non-negative third and fifth terms on the left hand side yields
\begin{align*}
\|\widetilde{\bu}_h^{n+1}\|^2 & + \nu\Delta t\|\nabla \widetilde{\bu}_h^{n+1}\|^2  + D_a^{-1}\Delta t\|\widetilde{\bu}_h^{n+1}\|^2 \\
&\leq \|\widetilde{\bu}_h^{n}\|^2 + {D_a}\Delta t\left(\beta_T^2 \|\widetilde{T}_h^{n}\|^2 + \beta_c^2\|\widetilde{S}_h^{n}\|^2  \right)\|g\|_{L^{\infty}}^2 + 
{\nu^{-1}}\Delta t\|\bF^{n+1}\|_{-1}^2
+ {\mu_1}\Delta t\|\bu^{n+1}\|^2 .
\end{align*}
Now add $\frac{\nu\Delta t}{4}\|\nabla \widetilde{\bu}_h^{n}\|^2 + \frac{D_a^{-1}\Delta t}{4}\|\widetilde{\bu}_h^{n}\|^2 $ to both sides and rewriting produces
\begin{align*}
&\|\widetilde{\bu}_h^{n+1}\|^2 + \frac{\nu\Delta t}{4}\|\nabla \widetilde{\bu}_h^{n+1}\|^2  + \frac{D_a^{-1}\Delta t}{4}\|\widetilde{\bu}_h^{n+1}\|^2 \\
& + \frac{\nu\Delta t}{4}\left( \|\nabla \widetilde{\bu}_h^{n+1}\|^2 + \|\nabla \widetilde{\bu}_h^{n}\|^2\right) 
+ \frac{\nu\Delta t}{2}\|\nabla \widetilde{\bu}_h^{n+1}\|^2
+ \frac{D_a^{-1}\Delta t}{4}\left(\|\widetilde{\bu}_h^{n+1}\|^2 + \|\widetilde{\bu}_h^{n}\|^2\right) 
+ \frac{D_a^{-1}\Delta t}{2}\|\widetilde{\bu}_h^{n+1}\|^2  \\
&\leq \|\widetilde{\bu}_h^{n}\|^2 +\frac{\nu\Delta t}{4}\|\nabla \widetilde{\bu}_h^{n}\|^2 + \frac{D_a^{-1}\Delta t}{4}\|\widetilde{\bu}_h^{n}\|^2  + {D_a}\Delta t\left(\beta_T^2 \|\widetilde{T}_h^{n}\|^2 + \beta_c^2\|\widetilde{S}_h^{n}\|^2  \right)\|g\|_{L^{\infty}}^2\\
&\,\,\,\,\, + 
{\nu^{-1}}\Delta t\|\bF^{n+1}\|_{-1}^2
+ {\mu_1}\Delta t\|\bu^{n+1}\|^2 .
\end{align*}
Repeating the arguments used in obtaining temperature (and concentration) stability estimate, we get 
\begin{align*}
&\frac{\nu\Delta t}{4}\left( \|\nabla \widetilde{\bu}_h^{n+1}\|^2 + \|\nabla \widetilde{\bu}_h^{n}\|^2\right) 
 + \frac{\nu\Delta t}{2}\|\nabla \widetilde{\bu}_h^{n+1}\|^2
+ \frac{D_a^{-1}\Delta t}{4}\left(\|\widetilde{\bu}_h^{n+1}\|^2 + \|\widetilde{\bu}_h^{n}\|^2\right) 
+ \frac{D_a^{-1}\Delta t}{2}\|\widetilde{\bu}_h^{n+1}\|^2  \\
&
\geq \frac{\nu\Delta t}{4 C_{PF}^2}(\| \widetilde{\bu}_h^{n+1}\|^2 + \|\widetilde{\bu}_h^{n}\|^2) + \frac{\nu \Delta t}{2}\|\nabla \widetilde{\bu}_h^{n+1}\|^2
+\frac{D_a^{-1}\Delta t}{4}\left(\|\widetilde{\bu}_h^{n+1}\|^2 + \|\widetilde{\bu}_h^{n}\|^2\right) 
+ \frac{D_a^{-1}\Delta t}{2}\|\widetilde{\bu}_h^{n+1}\|^2  
\\
&
\geq \left(\frac{\nu \Delta t}{4 C_{PF}^2} + \frac{D_a^{-1}\Delta t}{4}\right)\left(\|\widetilde{\bu}_h^{n+1}\|^2 + \|\widetilde{\bu}_h^{n}\|^2\right) 
 + \frac{\nu\Delta t}{4}\|\nabla \widetilde{\bu}_h^{n+1}\|^2 
 + \frac{D_a^{-1}\Delta t}{4}\|\widetilde{\bu}_h^{n+1}\|^2 
\\
&
\geq \min \left \{\frac{\nu \Delta t}{4 C_{PF}^2} + \frac{D_a^{-1}\Delta t}{4}, 1 \right \} \left(\|\widetilde{\bu}_h^{n+1}\|^2 + \frac{\nu\Delta t}{4}\|\nabla \widetilde{\bu}_h^{n+1}\|^2 
 + \frac{D_a^{-1}\Delta t}{4}\|\widetilde{\bu}_h^{n+1}\|^2 \right),
\end{align*}
and inserting this into \eqref{stabvel1}
\begin{align*}
&\|\widetilde{\bu}_h^{n+1}\|^2 + \frac{\nu\Delta t}{4}\|\nabla \widetilde{\bu}_h^{n+1}\|^2  + \frac{D_a^{-1}\Delta t}{4}\|\widetilde{\bu}_h^{n+1}\|^2\\
&\leq \frac{1}{(1+\alpha_u)} \left(\|\widetilde{\bu}_h^{n}\|^2 +\frac{\nu\Delta t}{4}\|\nabla \widetilde{\bu}_h^{n}\|^2 +\frac{D_a^{-1}\Delta t}{4}\|\widetilde{\bu}_h^{n}\|^2  \right)\\
& \,\,\,\,\, + \frac{1}{(1+\alpha_u)} \left({D_a}\Delta t\left(\beta_T^2 \|\widetilde{T}_h^{n}\|^2 + \beta_c^2\|\widetilde{S}_h^{n}\|^2  \right)\|g\|_{L^{\infty}}^2 + 
{\nu^{-1}}\Delta t\|\bF^{n+1}\|_{-1}^2
+ {\mu_1}\Delta t\|\bu^{n+1}\|^2 
\right).
\end{align*}
Multiplying by $2\Delta t$, using induction together with the stability estimates on the velocity and temperature gives the desired bound on the velocity.
%\begin{align*}
%\|\widetilde{T}_h^{n+1}\|^2 - \|\widetilde{T}_h^{n}\|^2  + \|\widetilde{T}_h^{n+1}-\widetilde{T}_h^{n}\|^2 + \kappa \Delta t\|\nabla \widetilde{T}_h^{n+1}\|^2 & + \mu_2 \Delta t\|I_H(\widetilde{T}_h^{n+1})\|^2\\
%&\leq \mu_2 \Delta t\|T^{n+1}\|^2 + 
%\kappa^{-1} \Delta t\|G^{n+1}\|_{-1}^2,
%\end{align*}
%and
%\begin{align*}
%\|\widetilde{C}_h^{n+1}\|^2 - \|\widetilde{C}_h^{n}\|^2  + \|\widetilde{C}_h^{n+1}-\widetilde{C}_h^{n}\|^2 + {D_c}\Delta t\|\nabla \widetilde{C}_h^{n+1}\|^2 & + \mu_3\Delta t\|I_H(\widetilde{C}_h^{n+1})\|^2\\
%&\leq \mu_3 \Delta t\|C^{n+1}\|^2
%+ D_c^{-1}\Delta t\|\Phi^{n+1}\|_{-1}^2 .
%\end{align*}
\end{proof}
\subsection{Long Time $L^2 $Accuracy}
In this section, we provide two long time accuracy results of Algorithm~\ref{algbe}: the first for the nudging parameters $\mu_1, \mu_2, \mu_3>0$, and the second for $\mu_1>0,\,\, \mu_2, \mu_3=0.$ In the convergence theory given below, we use the Scott-Vogelius finite elements. This is due to the fact that it simplifies our analysis. In the use of the non-divergence-free element, the optimal convergence results can be still obtained. However, the analysis requires the estimation of some additional terms, and becomes more technical. \\
We note here that our convergence analysis needs a restriction on nudging parameters, and course mesh size.  
\begin{theorem}\label{teo1}[Long time $L^2$-accuracy with $\mu_1>0, \mu_2>0$ and $\mu_3>0$]
Let $\bu\in L^{\infty}(0, \infty; H^{k+1}(\Omega))$, $p\in L^{\infty}(0, \infty; H^{k}(\Omega))$ and $T, S \in L^{\infty}(0, \infty; H^{k+1}(\Omega))$ be true solutions of (\ref{bous}). Assume that $\Delta t$ is sufficiently small such that it holds
\begin{align*}
 \min\left\{{C\nu^{-1}\left( \|\bta_{\bu}^{n+1}\|^2_{L^{\infty}} + \|\bu^{n+1}\|^2_{L^{\infty}}\right)},\,\, {D_a\beta_T^2 \| g\|_{L^{\infty}}^2}, \,\, {D_a \beta_c^2 \| g\|_{L^{\infty}}^2} \right \}< \Delta t.
\end{align*}
In addition, we assume that nudging parameters satisfy the following
\begin{align*}
\max\{1, \, \,\nu^{-1}\big( \|\bta_{\bu}^{n+1}\|^2_{L^{\infty}} & + \|\bu^{n+1}\|^2_{L^{\infty}}\big), \, \,\kappa^{-1}\left( \|\bta_{T}^{n+1}\|^2_{L^{\infty}} + \|T^{n+1}\|^2_{L^{\infty}}\right),\, \, D_S^{-1}\left( \|\bta_{S}^{n+1}\|^2_{L^{\infty}} + \|S^{n+1}\|^2_{L^{\infty}}\right)   \}\\
&\quad\quad\quad \quad\quad\quad\quad\quad\quad\quad\quad\quad\quad\quad\quad\quad\quad\quad\quad\quad\quad\quad\quad\quad\quad\quad\quad\leq \mu_1\leq \frac{\nu}{CH^2},\\
\max \{1,\,\, 4D_a\beta_T^2 \|g\|_{L^{\infty}}^2 \}&\leq \mu_2\leq \frac{\kappa}{CH^2}, \hspace{4mm}
\max \{1,\,\, 4 D_c \beta_c^2 \|g\|_{L^{\infty}}^2 \}\leq \mu_3\leq \frac{\kappa}{CH^2}.
\end{align*}
Then, for any time level $t^{n+1}, \,\, n=0,1,...$, the errors between the true solutions and solutions of Algorithm~\ref{algbe} satisfies the bound
\begin{align*}
\|E_{\bu}^{n+1}\|^2 + \|E_{T}^{n+1}\|^2 +\|E_{h,S}^{n+1}\|^2
\leq C \lambda^{-1} K + \left(1+\lambda\Delta t\right)^{-(n+1)}\left(\|\bu^{0}\|^2 + \|T^{0}\|^2 + \|S^{0}\|^2 \right).
\end{align*}
where
\begin{align*}
K:= \left( D_a+ 
\mu_1^{-1}(\beta_T^2 + \beta_c^2) + C_{PF}^2\kappa^{-1} + C_{PF}^2 D_c^{-1}\right) h^{2k+2}  + (\nu^{-1} + \kappa^{-1}+ D_c^{-1})h^{2k+2} +\nu^{-1}h^{4k+4}\\
+(D_a^{-1} + \mu_1 + \mu_2\ +\mu_3 )h^{2k+2}
+ \Delta t^2 (D_a + C_{PF}^2\kappa^{-1} + C_{PF}^2 D_c^{-1} + D_a(\beta_T^2 +\beta_c^2)\| g\|_{L^{\infty}}^2 ),
\end{align*}
$ \lambda:=\min\{ \lambda_1, \lambda_2, \lambda_3\}$, and 
\begin{align*}
\lambda_1:={D_a^{-1}} +\frac{\mu_1}{2} +\frac{\nu C_{PF}^{-2}}{2},\, \,  \lambda_2:=\frac{\mu_2}{2} +\frac{\kappa \,  C_{PF}^{-2}}{2},\,\,
\lambda_3:=\frac{\mu_3}{2} +\frac{D_c\, C_{PF}^{-2}}{2}.
\end{align*}
\end{theorem}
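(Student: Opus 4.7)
The plan is to derive error equations by subtracting Algorithm~\ref{algbe} from the continuous system \eqref{bous} tested against $\bX_h\times Y_h\times W_h$ functions at $t^{n+1}$. Writing $\bu^{n+1}-\widetilde{\bu}_h^{n+1}=\bta_{\bu}^{n+1}-\euh^{n+1}$, where $\bta_{\bu}^{n+1}$ is the distance from $\bu^{n+1}$ to an appropriate Scott--Vogelius interpolant and $\euh^{n+1}\in\bV_h$, and doing the analogous split for $T$ and $S$, I then test the three error equations with $\euh^{n+1}$, $\phi_{T,h}^{n+1}$, and $\phi_{S,h}^{n+1}$. Because the Scott--Vogelius pair is pointwise divergence-free the pressure error drops out with no inf--sup bookkeeping, which is precisely the simplification alluded to just before the theorem.

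Applying $2(a-b,a)=a^2-b^2+(a-b)^2$ to the time-difference term places on the left-hand side of each of the three equations a telescope $\|\phi^{n+1}\|^2-\|\phi^{n}\|^2$, a nonnegative square, the diffusion $\nu\|\nabla\euh^{n+1}\|^2$ (respectively $\kappa$ and $D_c$), the Darcy damping $D_a^{-1}\|\euh^{n+1}\|^2$, and the nudging coercivity $\mu_i\|I_H(\cdot)\|^2$. The right-hand side collects the backward-Euler consistency error (giving the $\Delta t^2$ contribution to $K$), the interpolation residuals involving $\bta_{\bu}^{n+1}-\bta_{\bu}^{n}$ and its $T,S$ analogues, the lagged buoyancy perturbation from $((\beta_T\widetilde{T}_h^n+\beta_c\widetilde{S}_h^n)-(\beta_T T^{n+1}+\beta_c S^{n+1}))g$, and the cross nudging term $\mu_1(I_H\bta_{\bu}^{n+1},I_H\euh^{n+1})$. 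The trilinear differences are split in the standard way, for instance
\begin{equation*}
b_1(\widetilde{\bu}_h^{n},\widetilde{\bu}_h^{n+1},\euh^{n+1})-b_1(\bu^{n+1},\bu^{n+1},\euh^{n+1})=b_1(\widetilde{\bu}_h^{n}-\bu^{n+1},\bu^{n+1},\euh^{n+1})+b_1(\widetilde{\bu}_h^{n},\bta_{\bu}^{n+1},\euh^{n+1}),
\end{equation*}
after skew-symmetry annihilates $b_1(\widetilde{\bu}_h^{n},\euh^{n+1},\euh^{n+1})$; bounding the surviving terms via \eqref{tribound1}--\eqref{tribound2} and Young's inequality is exactly what forces the $\mu_1$ lower bound containing $\nu^{-1}(\|\bta_{\bu}^{n+1}\|_{\infty}^2+\|\bu^{n+1}\|_{\infty}^2)$, since those $L^{\infty}$ coefficients must be dominated by the $L^2$ coercivity that nudging provides (with the temperature and concentration equations contributing the analogous $\kappa^{-1}$ and $D_c^{-1}$ pieces).

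The technical heart of the argument is the absorption of the nudging coercivity into an $L^2$ bonus. Using $\|\phi\|^2\le 2\|\phi-I_H\phi\|^2+2\|I_H\phi\|^2$ together with \eqref{interp1} gives
\begin{equation*}
\mu_1\|I_H(\euh^{n+1})\|^2\ \ge\ \tfrac12\mu_1\|\euh^{n+1}\|^2-\mu_1 C H^2\|\nabla\euh^{n+1}\|^2,
\end{equation*}
and the upper bound $\mu_1\le\nu/(CH^2)$ ensures the negative gradient piece is absorbed by half of the diffusion already on the left. The analogous manipulation with $\mu_2\le\kappa/(CH^2)$ and $\mu_3\le D_c/(CH^2)$ closes the temperature and concentration equations, so that after adding the three rearranged inequalities one has the diffusion, the Darcy damping, and the $L^2$ bonuses $\tfrac12\mu_i\|\phi\|^2$ all on the left. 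A Poincar\'e--Friedrichs conversion of the leftover gradient pieces into $L^2$ pieces, exactly as in the proof of Lemma~3.1, then packages everything into the coercive prefactor $(1+\lambda\Delta t)$ multiplying $\|\euh^{n+1}\|^2+\|\phi_{T,h}^{n+1}\|^2+\|\phi_{S,h}^{n+1}\|^2$, with $\lambda=\min\{\lambda_1,\lambda_2,\lambda_3\}$ as stated.

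What remains is a discrete Gronwall--induction: the recursion
\begin{equation*}
(1+\lambda\Delta t)\,Y^{n+1}\ \le\ Y^n + K\Delta t,\qquad Y^n:=\|\euh^n\|^2+\|\phi_{T,h}^n\|^2+\|\phi_{S,h}^n\|^2,
\end{equation*}
iterates to $Y^{n+1}\le(1+\lambda\Delta t)^{-(n+1)}Y^0+C\lambda^{-1}K$, and a final triangle inequality $\|E^{n+1}\|\le\|\bta^{n+1}\|+\|\phi^{n+1}\|$ together with the Scott--Vogelius approximation properties converts this into the stated bound on $E_{\bu},E_T,E_S$. The principal obstacle I anticipate is the careful bookkeeping of the coupled buoyancy cross terms: the $\bta_T$ and $\bta_S$ contributions generated by the decomposition have to be Young-split so that their $L^2$ parts are absorbed by $\tfrac12\mu_2$ and $\tfrac12\mu_3$ (this is precisely what forces $\mu_2\ge 4D_a\beta_T^2\|g\|_{\infty}^2$ and $\mu_3\ge 4D_c\beta_c^2\|g\|_{\infty}^2$), while the time lag between $\widetilde{T}_h^n,\widetilde{S}_h^n$ and $T^{n+1},S^{n+1}$ is folded into the consistency residual through the $\Delta t$ smallness hypothesis.
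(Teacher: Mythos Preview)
Your proposal is correct and follows essentially the same route as the paper: derive the error equations, split $E=\phi_h-\eta$ with a Scott--Vogelius interpolant so the pressure drops, test with $\phi_h$, convert the nudging coercivity $\mu_i\|I_H\phi_h\|^2$ into $\tfrac12\mu_i\|\phi_h\|^2$ minus a gradient remainder absorbed by diffusion under $\mu_i\le\cdot/(CH^2)$, sum the three inequalities, and iterate the contractive recursion $(1+\lambda\Delta t)Y^{n+1}\le Y^n+K\Delta t$. One small slip in your last paragraph: in the buoyancy cross term it is the $\phi_{h,T}^{n+1}$ and $\phi_{h,S}^{n+1}$ pieces (not $\eta_T,\eta_S$) that must be absorbed by $\tfrac{\mu_2}{4}\|\phi_{h,T}^{n+1}\|^2$ and $\tfrac{\mu_3}{4}\|\phi_{h,S}^{n+1}\|^2$ from the temperature and concentration equations---this is what actually forces the lower bounds on $\mu_2,\mu_3$---while the $\eta$-pieces simply enter $K$ via the approximation properties; similarly, the time-lagged $\|\phi_{h,T}^{n+1}-\phi_{h,T}^{n}\|^2$ and $\|\phi_{h,S}^{n+1}-\phi_{h,S}^{n}\|^2$ terms are absorbed by the $\tfrac{1}{2\Delta t}$ squares from the polarization identity (not the consistency residual), which is where the $\Delta t$ smallness hypothesis enters.
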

\begin{proof}
True solutions of (\ref{bous}) at time level $n+1$ satisfies the equations:
\begin{gather}
\frac{1}{\Delta t}\big(\bu^{n+1}- \bu^{n}, \bv_h\big) + b_1\,(\bu^{n}, \bu^{n+1}, \,\bv_h) +  b_1\,(\bu^{n+1} -\bu^{n}, \bu^{n+1}, \,\bv_h) - (p^{n+1}, \nabla\cdot \bv_h)
+ \nu\big(\nabla\bu^{n+1}, \nabla \bv_h\big)\nonumber\\
\qquad \qquad \,  + \, D_a^{-1}(\bu^{n+1}, \bv_h)
 = \left(\frac{\bu^{n+1}- \bu^{n}}{\Delta t} -\bu_t^{n+1}, \bv_h\right) 
% \qquad \qquad \qquad \qquad \qquad \qquad\qquad \qquad \qquad \qquad \qquad \qquad \qquad \,\, \, \, \,  + \, \, \, 
+ \, ((\beta_T (T^{n+1} - T^n) \, 
+ \, \beta_c (S^{n+1} - S^n))\bm g, \bv_h) \nonumber\\
\qquad \qquad  \qquad \qquad \qquad \qquad \qquad  \qquad \qquad \qquad\qquad \qquad + ((\beta_T T^n + \beta_c S^n)\bm g, \bv_h) 
\, 
+ \, (\bF^{n+1}, \bv_h), \label{true1}\\
%\qquad \qquad  \qquad \qquad \qquad \qquad \qquad \qquad \qquad\qquad \qquad \qquad \qquad \qquad \qquad \qquad  (\nabla\cdot \bu^{n+1},\, q_h)  = 0, \\
\frac{1}{\Delta t}\big(T^{n+1}- T^{n}, \Psi_h\big) + b_2\,(\bu^{n+1}, T^{n+1}, \, \Psi_h) \, 
+ \, \kappa\big(\nabla T^{n+1}, \nabla \Psi_h\big) = \left(\frac{T^{n+1}-T^{n}}{\Delta t} -T_t^{n+1}, \Psi_h\right) \nonumber \\
\qquad \qquad  \qquad \qquad \qquad \qquad \qquad \qquad \qquad\qquad \qquad \qquad \qquad  \qquad \qquad \qquad \qquad \,  + \, (G^{n+1}, \Psi_h), \label{true2}\\
\frac{1}{\Delta t}\big(S^{n+1}- S^{n}, \chi_h\big) + b_3\,(\bu^{n+1}, S^{n+1}, \, \chi_h) + D_c\big(\nabla S^{n+1}, \nabla \chi_h\big) =
\big(\frac{S^{n+1}-S^{n}}{\Delta t} - S_t^{n+1}, \chi_h\big)
\nonumber \\
\qquad \qquad  \qquad \qquad \qquad \qquad \qquad \qquad \qquad\qquad \qquad \qquad \qquad  \qquad \qquad \qquad \qquad \,  + \, (\Phi^{n+1}, \chi_h)
\label{true3}. 
\end{gather}
Subtracting \eqref{disc1}-\eqref{disc3} from \eqref{true1}-\eqref{true3} yields the following error equations:
\begin{gather*}
\frac{1}{\Delta t}\big(\bE_{\bu}^{n+1}- \bE_{\bu}^{n}, \bv_h\big) + b_1\,(\bu^{n}, \bE_{\bu}^{n+1}, \,\bv_h)
 - b_1\,(\bE_{\bu}^{n}- \bE_{\bu}^{n+1}, \bE_{\bu}^{n+1}, \,\bv_h)
 + b_1\,(\bE_{\bu}^{n}- \bE_{\bu}^{n+1}, {\bu}^{n+1}, \,\bv_h) \nonumber\\
- b_1\,(\bE_{\bu}^{n+1}, \bE_{\bu}^{n+1}, \,\bv_h)
+ b_1\,(\bE_{\bu}^{n+1}, \bu^{n+1}, \,\bv_h)
+ b_1\,({\bu}^{n+1}- {\bu}^{n}, \bu^{n+1}, \,\bv_h)
+ \nu\big(\nabla \bE_{\bu}^{n+1}, \nabla \bv_h\big)\nonumber\\
+ D_a^{-1}(\bE_{\bu}^{n+1}, \bv_h)
+\mu_1(I_H(\bE_{\bu}^{n+1}), I_H({\bv_h}))\nonumber\\
= \left(\frac{\bu^{n+1}- \bu^{n}}{\Delta t} -\bu_t^{n+1}, \bv_h\right)
+ \left((\beta_T (T^{n+1} - T^n) + \beta_c(S^{n+1} - S^n)\right)\bm g, \bv_h)  
+ \left((\beta_T E_T^n + \beta_c E_S^n \right)\bm g, \bv_h),
\end{gather*}
and
\begin{gather*}
\frac{1}{\Delta t}\big(E_{T}^{n+1}- E_{T}^{n}, \Psi_h\big)
+ b_2\,(\bu^{n+1}, E_{T}^{n+1}, \, \Psi_h)
- b_2\,(\bE_{\bu}^{n+1}, E_{T}^{n+1}, \, \Psi_h)
+ b_2\,(\bE_{\bu}^{n+1}, {T}^{n+1}, \, \Psi_h)\\
+\kappa(\nabla E_T^{n+1}, \nabla \Psi_h)
+\mu_2(I_H(E_{T}^{n+1}), I_H({\Psi_h}))
= \left(\frac{T^{n+1}- T^{n}}{\Delta t} -T_t^{n+1}, \Psi_h\right),
\end{gather*}
and
\begin{gather*}
\frac{1}{\Delta t}\big(E_{S}^{n+1}- E_{S}^{n}, \chi_h\big)
+ b_3\,(\bu^{n+1},E_{S}^{n+1}, \, \chi_h)
- b_3\,(\bE_{\bu}^{n+1}, E_{S}^{n+1}, \, \chi_h)
+ b_3\,(\bE_{\bu}^{n+1}, {S}^{n+1}, \, \chi_h)\\
+ D_c(\nabla E_S^{n+1}, \nabla \chi_h)
+ \mu_3(I_H(E_{S}^{n+1}), I_H({\chi_h}))
= \left(\frac{S^{n+1}- S^{n}}{\Delta t} - S_t^{n+1}, \chi_h\right).
\end{gather*}
Decompose the errors into a term that lies in the discrete space $\bV_h$ and one outside 
\begin{align*}
\bE_{\bu}^{n+1}:=\phi_{h,\bu}^{n+1} - \bta_{\bu}^{n+1}, \,\, E_{T}^{n+1}:=\phi_{h,T}^{n+1} - \bta_{T}^{n+1},\,\, E_{S}^{n+1}:=\phi_{h,S}^{n+1} - \bta_{S}^{n+1}.
\end{align*}
Then, letting $\bv_h=\phi_{h,\bu}^{n+1}$, $\Psi_h=\phi_{h,T}^{n+1}$, $\chi_h=\phi_{h,S}^{n+1}$ in error equations results in
\begin{align}
\frac{1}{2\Delta t}&\big( \|\phi_{h,\bu}^{n+1}\|^2 - \|\phi_{h,\bu}^{n}\|^2  + \|\phi_{h,\bu}^{n+1}- \phi_{h,\bu}^{n}\|^2 \big) + \nu  \|\nabla\phi_{h,\bu}^{n+1}\|^2 + Da^{-1} \|\phi_{h,\bu}^{n+1}\|^2 + \mu_1 \|\phi_{h,\bu}^{n+1}\|^2 \nonumber\\
&=\frac{1}{\Delta t}\left(\bta_{\bu}^{n+1} - \bta_{\bu}^{n}, \phi_{h,\bu}^{n+1}\right) 
+ b_1\,(\bu^{n}, \bta_{\bu}^{n+1}, \,\phi_{h,\bu}^{n+1})
- b_1\,(\bE_{\bu}^{n}- \bE_{\bu}^{n+1}, \bta_{\bu}^{n+1}, \,\phi_{h,\bu}^{n+1})\nonumber\\
&
\, \, \, \, \, \,+ b_1\,(\bE_{\bu}^{n}- \bE_{\bu}^{n+1}, {\bu}^{n+1}, \,\phi_{h,\bu}^{n+1}) 
- b_1\,(\bE_{\bu}^{n+1}, \bta_{\bu}^{n+1}, \,\phi_{h,\bu}^{n+1})
+ b_1\,(\bE_{\bu}^{n+1}, \bu^{n+1}, \,\phi_{h,\bu}^{n+1})\nonumber\\
&
\, \, \, \, \, \, + b_1\,({\bu}^{n+1}- {\bu}^{n}, \bu^{n+1}, \,\phi_{h,\bu}^{n+1})
+ 2 \mu_1(I_H(\phi_{h,\bu}^{n+1})-\phi_{h,\bu}^{n+1}, \phi_{h,\bu}^{n+1}))+ \mu_1\|I_H(\phi_{h,\bu}^{n+1})-\phi_{h,\bu}^{n+1}\|^2 \nonumber\\
&
\, \, \,\, \, \,
+ \mu_1( I_H(\bta_{\bu}^{n+1}), I_H(\phi_{h,\bu}^{n+1}) )
+ D_a^{-1}(\bta_{\bu}^{n+1}, \phi_{h,\bu}^{n+1})
+ \left(\bu_t^{n+1} - \frac{\bu^{n+1}- \bu^{n}}{\Delta t} , \bv_h\right)\nonumber\\
&
\, \, \, \, \, \,+ \left((\beta_T (T^n - T^{n+1}) + \beta_c (S^n- S^{n+1})\right)\bm g, \bv_h)  
 - \left((\beta_T E_T^n + \beta_c E_S^n \right)\bm g, \bv_h)\label{vel1},
\end{align}
and 
\begin{align}
\frac{1}{2\Delta t}&\big( \|\phi_{h,T}^{n+1}\|^2 - \|\phi_{h,T}^{n}\|^2  + \|\phi_{h,T}^{n+1}- \phi_{h,T}^{n}\|^2 \big) + \kappa \|\nabla\phi_{h,T}^{n+1}\|^2 + \mu_2 \|\phi_{h,T}^{n+1}\|^2 \nonumber\\
&=\frac{1}{\Delta t}\left(\bta_{T}^{n+1} - \bta_{T}^{n}, \phi_{h,T}^{n+1}\right)
+ b_2\,(\bu^{n+1}, \eta_{T}^{n+1}, \, \phi_{h,T}^{n+1})
+ b_2\,(\bE_{\bu}^{n+1}, \eta_{T}^{n+1}, \, \phi_{h,T}^{n+1})\nonumber\\
& 
\, \, \, \, \, \,+ b_2\,(\bE_{\bu}^{n+1}, {T}^{n+1}, \, \phi_{h,T}^{n+1})
+ 2\mu_2(I_H(\phi_{h,T}^{n+1})-\phi_{h,T}^{n+1}, I_H(\phi_{h,T}^{n+1}))
+\mu_2 \|I_H(\phi_{h,T}^{n+1}) - \phi_{h,T}^{n+1}\|^2 \nonumber\\
& 
\, \, \, \, \, \,+ \mu_2(I_H(\bta_{T}^{n+1}), I_H(\phi_{h,T}^{n+1}))
+ \frac{\Delta t}{2}(T_{tt}(s^*), \phi_{h,T}^{n+1})\label{temp1},
\end{align}
and
\begin{align}
\frac{1}{2\Delta t}&\big( \|\phi_{h,S}^{n+1}\|^2 - \|\phi_{h, S}^{n}\|^2  + \|\phi_{h,S}^{n+1}- \phi_{h,S}^{n}\|^2 \big) + D_c  \|\nabla\phi_{h, S}^{n+1}\|^2 + \mu_3 \|\phi_{h, S}^{n+1}\|^2 \nonumber\\
&=\frac{1}{\Delta t}\left(\bta_{S}^{n+1} - \bta_{S}^{n}, \phi_{h,S}^{n+1}\right)
+ b_3\,(\bu^{n+1}, \eta_{S}^{n+1}, \, \phi_{h,S}^{n+1})
+ b_3\,(\be_{\bu}^{n+1}, \eta_{S}^{n+1}, \, \phi_{h,S}^{n+1})\nonumber\\
& 
\, \, \, \, \, \,+ b_3\,(\bE_{\bu}^{n+1}, {S}^{n+1}, \, \phi_{h,S}^{n+1})
+ 2\mu_3(I_H(\phi_{h,S}^{n+1})-\phi_{h,S}^{n+1}, I_H(\phi_{h,S}^{n+1}))
+\mu_3 \|I_H(\phi_{h,S}^{n+1}) - \phi_{h,S}^{n+1}\|^2 \nonumber\\
& 
\, \, \, \, \, \,+ \mu_3(I_H(\bta_{S}^{n+1}), I_H(\phi_{h,S}^{n+1}))
+ \frac{\Delta t}{2}(S_{tt}(s^*), \phi_{h,S}^{n+1})\label{con1}.
\end{align}
We first bound the right hand side terms of \eqref{vel1}. The first term is bounded using Taylor expansion followed by Cauchy-Schwarz and Young's Inequalities
\begin{align*}
\frac{1}{\Delta t}\left(\bta_{\bu}^{n+1} - \bta_{\bu}^{n}, \phi_{h, \bu}^{n+1}\right)& \, \leq  \,\|\bta_{t,\bu}(s^{**})\| \|\phi_{h, \bu}^{n+1}\|\\
\, &\leq \,  C D_a\|\bta_{t,\bu}(s^{**})\|^2 + \frac{D_a^{-1}}{12}\|\phi_{h,\bu}^{n+1}\|^2.
\end{align*}
For nudging terms, we will use Cauchy-Schwarz, Estimate \eqref{interp1} and Young's Inequality to get
\begin{align*}
	2\mu_1(I_H(\phi_{h,\bu}^{n+1})-\phi_{h,\bu}^{n+1}, I_H(\phi_{h,\bu}^{n+1}))& \leq 2 \mu_1\|I_H(\phi_{h, \bu}^{n+1})-\phi_{h, \bu}^{n+1}\|\|\phi_{h,\bu}^{n+1}\|\\
	&\leq C\mu_1 H^2 \|\nabla \phi_{h,\bu}^{n+1}\|^2 + \frac{\mu_1}{8}\|\phi_{h,\bu}^{n+1}\|^2,\\
	\mu_1 \|I_H(\phi_{h,\bu}^{n+1}) - \phi_{h,\bu}^{n+1}\|^2& \leq C\mu_1 H^2 \|\nabla \phi_{h, \bu}^{n+1}\|^2,\\
\mu_1(I_H(\bta_{\bu}^{n+1}), I_H(\phi_{h,\bu}^{n+1}))& \leq C\mu_1\|\bta_{\bu}^{n+1} \| \|\phi_{h,\bu}^{n+1}\|\\
&\leq C\mu_1\|\bta_{\bu}^{n+1} \| + \frac{\mu_1}{8}\|\phi_{h,\bu}^{n+1}\|^2.
\end{align*}
The last four terms are bounded as follows:
\begin{align*}
 D_a^{-1}(\bta_{\bu}^{n+1}, \phi_{h,\bu}^{n+1}) & \leq D_a^{-1}\|\bta_{\bu}^{n+1}\|\|\phi_{h,\bu}^{n+1}\|\\
 & \leq C  D_a^{-1}\|\bta_{\bu}^{n+1}\|^2 + \frac{D_a^{-1}}{12}\|\phi_{h,\bu}^{n+1}\|^2\\
\frac{\Delta t}{2}(\bu_{tt}(s^*), \phi_{h,\bu}^{n+1})
&\leq C\Delta t\|\bu_{tt}(s^*)\| \|\phi_{h,\bu}^{n+1}\|\\
& \leq C  D_a \Delta t^2\|\bu_{tt}(s^{*})\|^2 + \frac{D_a^{-1}}{12}\|\phi_{h,\bu}^{n+1}\|^2,
\end{align*}
and
\begin{align*}
((\beta_T (T^n - T^{n+1}) & + \beta_c (S^n- S^{n+1}))\bm g, \phi_{h,\bu}^{n+1}) \\
& \leq\Delta t ( \beta_T T_t(s^*) + \beta_c S_t(s^*))\bm g, \, \phi_{h,\bu}^{n+1}) \\
& \leq \Delta t \left(\beta_T\| T_t(s^*)\| + \beta_c\| S_t(s^*)\| )\|\bm g\|_{L^{\infty}}\right)\|\phi_{h,\bu}^{n+1}\|\\
& \leq D_a \Delta t^2\left(\beta_T^2\| T_t\|^2_{L^{\infty}(0,\infty; L^2)} + \beta_c^2\| S_t\|^2_{L^{\infty}(0,\infty; L^2)} )\|\bm g\|_{L^{\infty}}^2\right) +\frac{D_a^{-1}}{12}\|\phi_{h,\bu}^{n+1}\|^2,
\end{align*}
The last term is first rewritten as follows
\begin{gather*}
-\left((\beta_T E_T^n + \beta_c E_S^n \right)\bm g, \phi_{h,\bu}^{n+1})\\
= \left((\beta_T( E_T^{n+1} - E_T^{n}) + \beta_c (E_S^{n+1}-E_S^n)\right)\bm g, \phi_{h,\bu}^{n+1}) + \left((\beta_T E_T^{n+1} + \beta_c E_S^{n+1} \right)\bm g, \phi_{h,\bu}^{n+1}).
%\left((\beta_T \eta_T^n + \beta_c \eta_S^n \right)\bm g, \phi_{h,\bu}^{n+1})
%-\left((\beta_T \phi_{h,T}+ \beta_c \phi_{h, S} \right)\bm g, \phi_{h,\bu}^{n+1})\\
%\leq 
\end{gather*}
After expanding error terms, we apply Cauchy-Schwarz and Young's Inequalities to the first term of the right hand side to get  
\begin{gather*}
\left((\beta_T( E_T^{n+1}-E_T^{n}) + \beta_c (E_S^{n+1}-E_S^n)\right)\bm g, \phi_{h,\bu}^{n+1})\\
 \leq \left((\beta_T (\eta_T^{n+1}- \eta_T^n) + \beta_c (\eta_S^{n+1}-\eta_S^n )\right)\bm g, \phi_{h,\bu}^{n+1})
-\left((\beta_T (\phi_{h,T}^{n+1} - \phi_{h,T}^n) + \beta_c(\phi_{h, S}^{n+1} -\phi_{h, S}^{n} )\right)\bm g, \phi_{h,\bu}^{n+1})\\
\leq \left( \beta_T \|\eta_T^{n+1}-\eta_T^n \| + \beta_c \|\eta_S^{n+1}-\eta_S^n \|\right)\|\bm g\|_{L^{\infty}}\|\phi_{h,\bu}^{n+1}\|
+
\left( \beta_T \|\phi_{h,T}^{n+1} - \phi_{h,T}^n\| + \beta_c \|\phi_{h,S}^{n+1} - \phi_{h,S}^n \|\right)\|\bm g\|_{L^{\infty}}\|\phi_{h,\bu}^{n+1}\|\\
\leq
C \mu_1^{-1}\left( \beta_T^2 \|\eta_T^{n+1}-\eta_T^n \|^2 + \beta_c^2 \|\eta_S^{n+1}-\eta_S^n \|^2\right)\|\bm g\|_{L^{\infty}}^2 + \frac{\mu_1}{8}\|\phi_{h,\bu}^{n+1}\|^2 \\
\, \, \, \, \, \, \, \, \, \, \, \,\, \, \, \, \, \,+
D_a\left( \beta_T^2 \|\phi_{h,T}^{n+1} - \phi_{h,T}^n\|^2 + \beta_c^2\|\phi_{h,S}^{n+1} - \phi_{h,S}^n \|^2\right)\|\bm g\|_{L^{\infty}}^2 + \frac{D_a^{-1}}{12}\|\phi_{h,\bu}^{n+1}\|.
\end{gather*}
In a similar manner, we have the bound for the second term
\begin{align}
\big((\beta_T E_T^{n+1} & + \beta_c E_S^{n+1} \big)\bm g, \phi_{h,\bu}^{n+1})
 = \left(\left(\beta_T \eta_T^{n+1} + \beta_c \eta_S^{n+1} \right)\bm g, \phi_{h,\bu}^{n+1}\right) - \left((\beta_T \phi_{h,T}^{n+1} + \beta_c \phi_{h,S}^{n+1} \right)\bm g, \phi_{h,\bu}^{n+1})\nonumber\\
&\leq \left(\beta_T \|\eta_T^{n+1}\| + \beta_c \|\eta_S^{n+1}\| \right)\|\bm g\|_{L^{\infty}}\| \phi_{h,\bu}^{n+1}\| + \left(\beta_T \|\phi_{h,T}^{n+1}\| + \beta_c \|\phi_{h,S}^{n+1}\| \right)\|\bm g\|_{L^{\infty}}\| \phi_{h,\bu}^{n+1}\|\nonumber\\
& 
\leq C \mu_1^{-1}\left(\beta_T^2 \|\eta_T^{n+1}\|^2 + \beta_c^2 \|\eta_S^{n+1}\|^2 \right)\|\bm g\|^2_{L^{\infty}} +\frac{\mu_1}{8}\| \phi_{h,\bu}^{n+1}\|^2\nonumber\\
&\, \, \, \, \, \, +\left(\beta_T^2 \|\phi_{h,T}^{n+1}\|^2 + \beta_c^2 \|\phi_{h,S}^{n+1}\|^2 \right)\|\bm g\|_{L^{\infty}}^2 + \frac{D_a^{-1}}{12}\| \phi_{h,\bu}^{n+1}\|^2.\label{farkterim}
\end{align}
To estimate the first non-linear term, we use H{\"{o}}lder and Young's Inequalities 
\begin{align*}
	b_1\,(\bu^{n}, \eta_{\bu}^{n+1}, \, \phi_{h,\bu}^{n+1})
& =
(\bu^{n}\cdot\nabla\eta_{\bu}^{n+1}, \, \phi_{h,\bu}^{n+1} ) = - (\bu^{n}\cdot\nabla\phi_{h,\bu}^{n+1}, \eta_{\bu}^{n+1})\\
&
\leq
\|\bu^{n}\|_{L^{\infty}}\|\nabla\phi_{h,\bu}^{n+1}\| \|\eta_{\bu}^{n+1}\|\\
&
\leq C\nu^{-1}\|\bu^{n}\|_{L^{\infty}}^2 \|\eta_{\bu}^{n+1}\|^2 + \frac{\nu}{20} \|\nabla\phi_{h,\bu}^{n+1}\|^2.
\end{align*}
To bound the remaining four terms, we first expand error terms, then apply H{\"{o}}lder and Young's Inequalities to get
\begin{align*}
 b_1\,(\bE_{\bu}^{n+1} - \bE_{\bu}^{n}, \bta_{\bu}^{n+1}, \, \phi_{h,\bu}^{n+1})&=-((\bta_{\bu}^{n} - \bta_{\bu}^{n+1})\cdot\nabla \phi_{h,\bu}^{n+1}, \, \eta_{\bu}^{n+1}) + ((\phi_{h,\bu}^{n}-\phi_{h,\bu}^{n+1})\cdot\nabla \phi_{h, \bu}^{n+1},\, \eta_{\bu}^{n+1})\\
&
\leq \| \bta_{\bu}^{n} - \bta_{\bu}^{n+1}\| \|\nabla \phi_{h,T}^{n+1}\| \|\eta_{\bu}^{n+1}\|_{L^{\infty}} + \|\phi_{h,\bu}^{n}-\phi_{h,\bu}^{n+1}\| \|\nabla \phi_{h, \bu}^{n+1}\| \|\eta_{\bu}^{n+1}\|_{L^{\infty}}\\
&
\leq C\nu^{-1}\|\bta_{\bu}^{n} - \bta_{\bu}^{n+1}\| ^2 \|\eta_{\bu}^{n+1}\|^2_{L^{\infty}} + \frac{\nu}{20} \|\nabla \phi_{h,\bu}^{n+1}\|^2 \\
& \, \, \, \, \,+  C\nu^{-1}\|\phi_{h,\bu}^{n}-\phi_{h,\bu}^{n+1}\|^2 \|\eta_{\bu}^{n+1}\|^2_{L^{\infty}} + \frac{\nu}{20}\|\nabla \phi_{h,\bu}^{n+1}\|^2, 
 \end{align*}
 and 
\begin{align*}
 b_1\,(\bE_{\bu}^{n+1} -  \bE_{\bu}^{n}, {\bu}^{n+1}, \, \phi_{h,\bu}^{n+1}) &=-((\bta_{\bu}^{n} - \bta_{\bu}^{n+1})\cdot\nabla \phi_{h,\bu}^{n+1}, \, {\bu}^{n+1}) + ((\phi_{h,\bu}^{n}-\phi_{h,\bu}^{n+1})\cdot\nabla \phi_{h, \bu}^{n+1},\, {\bu}^{n+1})\\
& 
 \leq
 \|\bta_{\bu}^{n} - \bta_{\bu}^{n+1}\| \|\nabla \phi_{h,\bu}^{n+1}\| \| {\bu}^{n+1}\|_{L^{\infty}} 
 +
 \|\phi_{h,\bu}^{n}-\phi_{h,\bu}^{n+1}\| \|\nabla \phi_{h, \bu}^{n+1}\| \| {\bu}^{n+1}\|_{L^{\infty}}\\
 &
 \leq 
 C\nu^{-1}\|\bta_{\bu}^{n} - \bta_{\bu}^{n+1}\|^2 \| {\bu}^{n+1}\|_{L^{\infty}}^2 +\frac{\nu}{20}\|\nabla \phi_{h,\bu}^{n+1}\|^2 \\
&
 \, \, \, \, \,+ C\nu^{-1} \|\phi_{h,\bu}^{n}-\phi_{h,\bu}^{n+1}\|^2\| {\bu}^{n+1}\|_{L^{\infty}}^2 + \frac{\nu}{20} \|\nabla \phi_{h, \bu}^{n+1}\|^2,
 \end{align*} 
 and
\begin{align*}
 b_1\,(\bE_{\bu}^{n+1}, \bta_{\bu}^{n+1}, \, \phi_{h,\bu}^{n+1})&=-(\bta_{\bu}^{n+1}\cdot\nabla \phi_{h,\bu}^{n+1}, \, \eta_{\bu}^{n+1}) + (\phi_{h,\bu}^{n+1}\cdot\nabla \phi_{h, \bu}^{n+1},\, \eta_{\bu}^{n+1})\\
&
\leq \|\bta_{\bu}^{n+1}\| \|\nabla \phi_{h, \bu}^{n+1}\| \|\eta_{\bu}^{n+1}\|_{L^{\infty}} + \|\phi_{h,\bu}^{n+1}\| \|\nabla \phi_{h, \bu}^{n+1}\| \|\eta_{\bu}^{n+1}\|_{L^{\infty}}\\
&
\leq C\nu^{-1}\|\bta_{\bu}^{n+1}\| ^2 \|\eta_{\bu}^{n+1}\|^2_{L^{\infty}} + \frac{\nu}{20} \|\nabla \phi_{h,\bu}^{n+1}\|^2 \\
& \, \, \, \, \,+  C\nu^{-1}\|\phi_{h,\bu}^{n+1}\|^2 \|\eta_{\bu}^{n+1}\|^2_{L^{\infty}} + \frac{\nu}{20}\|\nabla \phi_{h,\bu}^{n+1}\|^2,
 \end{align*}
and 
\begin{align*}
 b_1\,(\bE_{\bu}^{n+1}, {\bu}^{n+1}, \, \phi_{h,\bu}^{n+1})&=-(\bta_{\bu}^{n+1}\cdot\nabla \phi_{h,\bu}^{n+1}, \, {\bu}^{n+1}) + (\phi_{h,\bu}^{n+1}\cdot\nabla \phi_{h, \bu}^{n+1},\, {\bu}^{n+1})\\
&
\leq \|\bta_{\bu}^{n+1}\| \|\nabla \phi_{h, \bu}^{n+1}\| \|{\bu}^{n+1}\|_{L^{\infty}} + \|\phi_{h,\bu}^{n+1}\| \|\nabla \phi_{h, \bu}^{n+1}\| \|{\bu}^{n+1}\|_{L^{\infty}}\\
&
\leq C\nu^{-1}\|\bta_{\bu}^{n+1}\| ^2 \|{\bu}^{n+1}\|^2_{L^{\infty}} + \frac{\nu}{20} \|\nabla \phi_{h,\bu}^{n+1}\|^2 \\
& \, \, \, \, \,+  C\nu^{-1}\|\phi_{h,\bu}^{n+1}\|^2 \|{\bu}^{n+1}\|^2_{L^{\infty}} + \frac{\nu}{20}\|\nabla \phi_{h,\bu}^{n+1}\|^2. 
\end{align*}
We estimate the last term by using Taylor expansion in the first term, and then apply standard inequalities
\begin{align*}
 b_1\,({\bu}^{n+1} -{\bu}^{n}, {\bu}^{n+1}, \, \phi_{h,\bu}^{n+1})&=-(({\bu}^{n+1} -{\bu}^{n})\cdot\nabla \phi_{h,\bu}^{n+1}, \, {\bu}^{n+1})
 = -\Delta t \, ({\bu}_{t}(t^*)\cdot\nabla \phi_{h,\bu}^{n+1}, \, {\bu}^{n+1}) \\
&
\leq \Delta t\|{\bu}_{t}(t^*)\| \|\nabla \phi_{h, \bu}^{n+1}\| \|{\bu}^{n+1}\|_{L^{\infty}}\\
&
\leq C\nu^{-1}\Delta t^2\|{\bu}_{t}(t^*)\|^2 \|{\bu}^{n+1}\|^2_{L^{\infty}} + \frac{\nu}{20} \|\nabla \phi_{h,\bu}^{n+1}\|^2 .
 \end{align*} 
Plugging all these estimates to the right hand side of \eqref{vel1}, and rearranging terms yields
\begin{align}
&\frac{1}{2\Delta t}\big( \|\phi_{h,\bu}^{n+1}\|^2 - \|\phi_{h,\bu}^{n}\|^2  \big)
+ \left( \frac{1}{2\Delta t} -C\nu^{-1}(\|\eta_{\bu}^{n+1}\|^2_{L^{\infty}}+\|{\bu}^{n+1}\|^2_{L^{\infty}})\right)\|\phi_{h,\bu}^{n+1}- \phi_{h,\bu}^{n}\|^2 \nonumber\\
& + \frac{\nu}{4} \|\nabla\phi_{h,\bu}^{n+1}\|^2  + \left( \frac{\nu}{4} -C\mu_1 H^2\right) \|\nabla\phi_{h,\bu}^{n+1}\|^2
 +\frac{Da^{-1} }{2}\|\phi_{h,\bu}^{n+1}\|^2 +\frac{\mu_1 }{2} \|\phi_{h,\bu}^{n+1}\|^2 \nonumber\\
 &
\leq C D_a\|\bta_{t,\bu}\|^2_{L^{\infty}(0,\infty; L^2)}\nonumber\\
&
+
C\nu^{-1}\left( \|\bu^{n}\|_{L^{\infty}}^2 \|\bta_{\bu}^{n+1}\|^2 +  \|\bta_{\bu}^{n+1}\|^2_{L^{\infty}} \|\bta_{\bu}^{n}-\bta_{\bu}^{n+1}\|^2 + \|{\bu}^{n+1}\|^2_{L^{\infty}} \|\bta_{\bu}^{n}-\bta_{\bu}^{n+1}\|^2
+\|\bta_{\bu}^{n+1}\|^2_{L^{\infty}} \|\bta_{\bu}^{n+1}\|^2\right)\nonumber\\
&
+ C\nu^{-1}\left( \|\bta_{\bu}^{n+1}\|_{L^{\infty}}^2  +  \|{\bu}^{n+1}\|_{L^{\infty}}^2  \right)\|\phi_{h,\bu}^{n+1}\|^2
+ C D_a^{-1}\|\bta_{\bu}^{n+1}\|^2 + C\mu_1\|\bta_{\bu}^{n+1}\|^2 + C D_a \Delta t^2\|\bu_{tt}(t^*)\|^2 \nonumber\\
& 
+C D_a \Delta t^2 \left(\beta_T^2 \|T_t\|^2_{L^{\infty}(0,\infty;L^2)} + \beta_c^2\|S_t\|^2_{L^{\infty}(0,\infty;L^2)} \right)\|\bm g\|^2_{L^{\infty}} \nonumber\\
&
+C \mu_1^{-1}\left( \beta_T^2 \|\bta_{T}^n - \bta_{T}^{n+1}\|^2 + \beta_c^2  \|\bta_{S}^n - \bta_{S}^{n+1}\|^2\right)\|\bm g\|^2_{L^{\infty}}\nonumber\\
&
+C D_a\left( \beta_T^2 \|\phi_{h,T}^n - \phi_{h,T}^{n+1}\|^2 + \beta_c^2  \|\phi_{h,T}^n - \phi_{h,T}^{n+1}\|^2\right)\|\bm g\|^2_{L^{\infty}}\nonumber\\
&
+C \mu_1^{-1}\left( \beta_T^2 \|\bta_{T}^{n+1}\|^2 + \beta_c^2  \|\bta_{S}^{n+1}\|^2\right)\|\bm g\|^2_{L^{\infty}} +C D_a\left( \beta_T^2 \|\phi_{h,T}^{n+1}\|^2 + \beta_c^2  \|\phi_{h,S}^{n+1}\|^2\right)\|\bm g\|^2_{L^{\infty}}\label{vel2}.
\end{align}
We bound the right hand side terms of \eqref{temp1} in a similar manner. Using Taylor expansion, Cauchy-Schwarz and Young's inequalities along with the interpolation property \ref{interp1} and \ref{interp2} yields
\begin{align*}
\frac{1}{\Delta t}\left(\bta_{T}^{n+1} - \bta_{T}^{n}, \phi_{T}^{n+1}\right)& \, \leq  \,\|\bta_{t,T}(s^{**})\| C_{PF}\|\nabla\phi_{T}^{n+1}\|\\
&
\, \leq \,  C C_{PF}^2 \kappa^{-1}\|\bta_{t,T}(s^{**})\|^2 + \frac{\kappa}{14}\|\nabla\phi_{h, T}^{n+1}\|^2,\\
	2\mu_2(I_H(\phi_{h,T}^{n+1})-\phi_{h,T}^{n+1}, I_H(\phi_{h,T}^{n+1}))
	& \leq 2 \mu_2\|I_H(\phi_{h,T}^{n+1})-\phi_{h,T}^{n+1}\|\|\phi_{h,T}^{n+1}\|\\
	&\leq C\mu_2 H^2 \|\nabla \phi_{h,T}^{n+1}\|^2 + \frac{\mu_2}{4}\|\phi_{h,T}^{n+1}\|^2,\\
	\mu_2 \|I_H(\phi_{h,T}^{n+1}) - \phi_{h,T}^{n+1}\|^2& \leq C\mu_2 H^2 \|\nabla \phi_{h,T}^{n+1}\|^2,\\
\mu_2(I_H(\bta_{T}^{n+1}), I_H(\phi_{h,T}^{n+1}))
	& \leq C\mu_2\|\bta_{T}^{n+1} \| \|\phi_{h,T}^{n+1}\|\nonumber\\
	& \leq C\mu_2\|\bta_{T}^{n+1} \|^2 + \frac{\mu_2}{4}\|\phi_{h,T}^{n+1}\|^2,\\	
	\frac{\Delta t}{2}(T_{tt}(s^*), \phi_{h,T}^{n+1})
&\leq C\Delta t\|T_{tt}(s^*)\| C_{PF}\|\nabla\phi_{h,T}^{n+1}\|\nonumber\\
&\leq C C_{PF}^2 \kappa^{-1} \Delta t^2\|T_{tt}(s^{*})\|^2 + \frac{\kappa}{14}\|\nabla\phi_{h, T}^{n+1}\|^2.
\end{align*}
The non-linear terms are estimated by applying H{\"{o}}lder and Young's inequalities
\begin{align*}
	b_2\,(\bu^{n+1}, \eta_{T}^{n+1}, \, \phi_{h,T}^{n+1})
&=
(\bu^{n+1}\cdot\nabla\eta_{T}^{n+1}, \, \phi_{h,T}^{n+1} ) = - (\bu^{n+1}\cdot\nabla\phi_{h,T}^{n+1}, \eta_{T}^{n+1})\\
&
\leq C \|\bu^{n+1}\|_{L^{\infty}}\|\nabla\phi_{h,T}^{n+1}\|\|\eta_{T}^{n+1}\|\\
&
\leq C \kappa ^{-1}\|\bu^{n+1}\|^2_{L^{\infty}}\|\eta_{T}^{n+1}\|^2 + \frac{\kappa}{14}\|\nabla\phi_{h,T}^{n+1}\|^2,\\
	 b_2\,(\bE_{\bu}^{n+1}, \eta_{T}^{n+1}, \, \phi_{h,T}^{n+1})&=-(\bta_{\bu}^{n+1}\cdot\nabla \phi_{h,T}^{n+1}, \eta_{T}^{n+1}) + (\phi_{h,\bu}^{n+1}\cdot\nabla \phi_{h,T}^{n+1}, \eta_{T}^{n+1})\\
& \leq C\|\bta_{\bu}^{n+1}\|\|\nabla \phi_{h,T}^{n+1}\| \|\eta_{T}^{n+1}\|_{L^{\infty}} 
+  C\|\phi_{h,\bu}^{n+1}\|\|\nabla \phi_{h,T}^{n+1}\| \|\eta_{T}^{n+1}\|_{L^{\infty}}\\
& \leq C\kappa^{-1}\|\bta_{\bu}^{n+1}\|^2 \|\eta_{T}^{n+1}\|_{L^{\infty}}^2 +\frac{\kappa}{14}\|\nabla \phi_{h,T}^{n+1}\|^2\\
& \, \, \, \, \, \,+ C \kappa ^{-1}\|\phi_{h,\bu}^{n+1}\|^2\|\eta_{T}^{n+1}\|_{L^{\infty}}^2+\frac{\kappa}{14}\|\nabla \phi_{h,T}^{n+1}\|^2,\\
 	b_2\,(\bE_{\bu}^{n+1}, {T}^{n+1}, \, \phi_{h,T}^{n+1}) & = (\bE_{\bu}^{n+1}\cdot\nabla {T}^{n+1}, \, \phi_{h,T}^{n+1})\\
 & =-(\bta_{\bu}^{n+1}\cdot\nabla \phi_{h,T}^{n+1}, {T}^{n+1}) + (\phi_{h, \bu}^{n+1}\cdot\nabla\phi_{h,T}^{n+1}, {T}^{n+1})\\
 & \leq C\kappa^{-1}\|\bta_{\bu}^{n+1}\|\|{T}^{n+1}\|_{L^{\infty}}\|\nabla \phi_{h,T}^{n+1}\| + C\kappa^{-1}\|\phi_{h, \bu}^{n+1}\|\|\nabla\phi_{h,T}^{n+1}\|\|{T}^{n+1}\|_{L^{\infty}}\\
 &\leq C\kappa^{-1}\|\bta_{\bu}^{n+1}\|^2_{L^{\infty}} \|T^{n+1}\|^ 2 +\frac{\kappa}{14}\|\nabla \phi_{h,T}^{n+1}\|^2\\
& \, \, \, \, \, \,+ C\kappa^{-1}
\|\phi_{h,\bu}^{n+1}\|^2\|{T}^{n+1}\|_{L^{\infty}}^2 +\frac{\kappa}{14}\|\nabla \phi_{h,T}^{n+1}\|^2.
\end{align*}
Plugging all these estimates into right hand side of \eqref{temp1} and reducing leads to
\begin{align}
&\frac{1}{2\Delta t}\big( \|\phi_{h,T}^{n+1}\|^2 - \|\phi_{h,T}^{n}\|^2  + \|\phi_{h,T}^{n+1}- \phi_{h,T}^{n}\|^2 \big) 
+ \frac{\kappa}{4}\|\nabla\phi_{h,T}^{n+1}\|^2
+ \left(\frac{\kappa}{4} -C\mu_2H^2\right)\|\nabla\phi_{h,T}^{n+1}\|^2 
+ \frac{\mu_2}{2} \|\phi_{h,T}^{n+1}\|^2 \nonumber\\
&\leq C C_{PF}^2\kappa^{-1} \|\eta_{t,T}\|^2_{L^{\infty}\left(0,\infty,; L^2\right)} 
+  \kappa^{-1}\big(\|\bu^{n+1}\|_{L^{\infty}}^2\|\eta_T^{n+1}\|^2 + \|\eta_{\bu}^{n+1}\|^2\|\eta_T^{n+1}\|^2_{L^{\infty}}  +\|T^{n+1}\|^2_{L^{\infty}} \|\eta_{\bu}^{n+1}\|^2 \big)\nonumber\\
&\, \, \, \, \, \, + C \kappa^{-1}\left(\|T^{n+1}\|^2_{L^{\infty}} +   \|\eta_T^{n+1}\|^2_{L^{\infty}}\right) \|\phi_{h,\bu}^{n+1}\|^2 + C\mu_2 \|\eta_T^{n+1}\|^2 + C C_{PF}^2 \kappa^{-1}\Delta t^2\|T_{tt}\|_{L^{\infty}\left(0,\infty,; L^2\right)}^2\label{temp2}.
\end{align}
Similarly bounding the right hand side terms of \eqref{con1}, we obtain
\begin{align}
&\frac{1}{2\Delta t}\big( \|\phi_{h,S}^{n+1}\|^2 - \|\phi_{h,S}^{n}\|^2  + \|\phi_{h, S}^{n+1}- \phi_{h,S}^{n}\|^2 \big) 
+ \frac{D_c}{4}\|\nabla\phi_{h, S}^{n+1}\|^2
+ \left(\frac{D_c}{4} -C\mu_3 H^2\right)\|\nabla\phi_{h,S}^{n+1}\|^2 
+ \frac{\mu_3}{2} \|\phi_{h,S}^{n+1}\|^2 \nonumber\\
&\leq C C_{PF}^2D_c^{-1} \|\eta_{t,S}\|^2_{L^{\infty}\left(0,\infty,; L^2\right)} 
+ C D_c^{-1}\big(\|\bu^{n+1}\|_{L^{\infty}}^2\|\eta_S^{n+1}\|^2 + \|\eta_{\bu}^{n+1}\|^2\|\eta_S^{n+1}\|^2_{L^{\infty}}  + \|S^{n+1}\|^2_{L^{\infty}} \|\eta_{\bu}^{n+1}\|^2 \big)\nonumber\\
&\, \, \, \, \, \, + CD_c^{-1}\left(\|S^{n+1}\|^2_{L^{\infty}} +   \|\eta_S^{n+1}\|^2_{L^{\infty}}\right) \|\phi_{h,\bu}^{n+1}\|^2 + C\mu_3 \|\eta_S^{n+1}\|^2 + C C_{PF}^2 D_c^{-1}\Delta t^2\|S_{tt}\|_{L^{\infty}\left(0,\infty,; L^2\right)}^2\label{con2}.
\end{align}
Summing \eqref{vel2}, \eqref{temp2} and \eqref{con2} produces:
\begin{gather*}
\frac{1}{2\Delta t}\left(\|\phi_{h,\bu}^{n+1}\|^2 + \|\phi_{h,T}^{n+1}\|^2 + \|\phi_{h,S}^{n+1}\|^2\right) - \frac{1}{2\Delta t}\left(\|\phi_{h,\bu}^{n}\|^2 + \|\phi_{h,T}^{n}\|^2 + \|\phi_{h,S}^{n}\|^2  \right) \\
+ \left( \frac{1}{2\Delta t} - C\nu^{-1}(\|\eta_{\bu}^{n+1}\|^2_{L^{\infty}}+\|{\bu}^{n+1}\|^2_{L^{\infty}})\right)\|\phi_{h,\bu}^{n+1}- \phi_{h,\bu}^{n}\|^2 
+\left( \frac{1}{2\Delta t} - D_a\beta_T^2\|\bm g\|_{L^{\infty}}^2\right)\|\phi_{h,T}^{n+1}- \phi_{h,T}^{n}\|^2\\
+\left( \frac{1}{2\Delta t} - D_a \beta_c^2\|\bm g\|_{L^{\infty}}^2\right)\|\phi_{h,S}^{n+1}- \phi_{h,S}^{n}\|^2 
+
\frac{\nu}{4}\|\nabla\phi_{h,\bu}^{n+1}\|^2 
+ 
\left(\frac{\nu}{4} - C\mu_1 H^2 \right)\|\nabla\phi_{h,\bu}^{n+1}\|^2\\
+\frac{\kappa}{4}\|\nabla\phi_{h,T}^{n+1}\|^2 + \left(\frac{\kappa}{4} -C\mu_2H^2 \right)\|\nabla\phi_{h,T}^{n+1}\|^2
+\frac{D_c}{4}\|\nabla\phi_{h,S}^{n+1}\|^2 + \left(\frac{D_c}{4} - C\mu_3 H^2 \right)\|\nabla\phi_{h, S}^{n+1}\|^2\\
+ \frac{D_a^{-1}}{2}\|\phi_{h,\bu}^{n+1}\|^2 +\frac{\mu_1}{4}\|\phi_{h,\bu}^{n+1}\|^2\\
+
\bigg( \frac{\mu_1}{4} - C \nu^{-1} \left(\|\bta_{\bu}^{n+1}\|_{L^{\infty}}^2 + \|{\bu}^{n+1}\|_{L^{\infty}}^2\right) - C\kappa^{-1} \left(\|\bta_{T}^{n+1}\|_{L^{\infty}}^2 + \|{T}^{n+1}\|_{L^{\infty}}^2 \right) \\
- C D_c^{-1} \left(\|\bta_{S}^{n+1}\|_{L^{\infty}}^2 + \|S^{n+1}\|_{L^{\infty}}^2 \right) \bigg) \|\phi_{h,\bu}^{n+1}\|^2\\
+
\frac{\mu_2}{4}\|\phi_{h,T}^{n+1}\|^2 
+ 
\left( \frac{\mu_2}{4} - D_a \beta_T^2\|\bm g\|_{L^{\infty}}^2\right)\|\phi_{h,T}^{n+1}\|^2 
+
\frac{\mu_3}{4}\|\phi_{h,S}^{n+1}\|^2 
+ 
\left( \frac{\mu_3}{4} - D_a \beta_c^2 \|\bm g\|_{L^{\infty}}^2\right)\|\phi_{h, S}^{n+1}\|^2 \\
\leq
C\bigg(
D_a\|\bta_{t,\bu}\|^2_{L^{\infty}(0,\infty; L^2)} + C_{PF}^2 \kappa^{-1} \|\eta_{t,T}\|^2_{L^{\infty}(0,\infty; L^2)} +C_{PF}^2 D_c^{-1} \|\bta_{t,S}\|^2_{L^{\infty}(0,\infty; L^2)}
\bigg)\\
+ C\nu^{-1}\bigg( ( \|\bu^{n}\|_{L^{\infty}}^2 + \|\bu^{n+1}\|_{L^{\infty}}^2 ) \|\bta_{\bu}^{n+1}\|^2 +  \|\bta_{\bu}^{n+1}\|^2_{L^{\infty}} \|\bta_{\bu}^{n}-\bta_{\bu}^{n+1}\|^2 + \|{\bu}^{n+1}\|^2_{L^{\infty}} \|\bta_{\bu}^{n}-\bta_{\bu}^{n+1}\|^2\\
+\|\bta_{\bu}^{n+1}\|^2_{L^{\infty}} \|\bta_{\bu}^{n+1}\|^2\bigg)\nonumber\\ 
+ C\kappa^{-1}\bigg(\|\bu^{n+1}\|_{L^{\infty}}^2\|\eta_T^{n+1}\|^2 + \|\eta_{\bu}^{n+1}\|^2\|\eta_T^{n+1}\|^2_{L^{\infty}}  +\|T^{n+1}\|^2_{L^{\infty}} \|\eta_{\bu}^{n+1}\|^2 \bigg)\nonumber\\
+ C D_c^{-1}\bigg(\|\bu^{n+1}\|_{L^{\infty}}^2\|\eta_S^{n+1}\|^2 + \|\eta_{\bu}^{n+1}\|^2\|\eta_S^{n+1}\|^2_{L^{\infty}}  + \|S^{n+1}\|^2_{L^{\infty}} \|\eta_{\bu}^{n+1}\|^2 \bigg)\nonumber\\
+ C \left( D_a^{-1}\|\bta_{\bu}^{n+1}\|^2 + \mu_1\|\bta_{\bu}^{n+1}\|^2 + \mu_2\|\eta_{T}^{n+1}\|^2 + \mu_3\|\eta_{S}^{n+1}\|^2  \right) \nonumber\\
+ C\Delta t^2 \bigg(
D_a \|\bu_{tt}\|_{L^{\infty}(0,\infty;L^2)}^2 + C_{PF}^2\kappa^{-1}\|T_{tt}\|_{L^{\infty}(0,\infty;L^2)}^2 	 + C_{PF}^2 D_c^{-1}\|S_{tt}\|_{L^{\infty}(0,\infty;L^2)^2} \bigg) \\	
+ D_a \Delta t^2 \left( \beta_T^2 \|T_t\|_{L^{\infty}(0,\infty;L^2)}^2 +  \beta_c^2 \|S_t\|_{L^{\infty}(0,\infty;L^2)}^2\right) \\
+\mu_1^{-1}\bigg(\beta_T^2\left( \|\eta_T^n -\eta_T^{n+1}\|^2 + \|\eta_T^{n+1}\|^2 \right)
+ \beta_c^2\left( \|\eta_S^n -\eta_S^{n+1}\|^2 + \|\eta_S^{n+1}\|^2 \right)   
\bigg)\|\bm g\|^2_{L^{\infty}}
\end{gather*}
Using assumptions on the true solutions yields

\begin{gather*}
\frac{1}{2\Delta t}\left(\|\phi_{h,\bu}^{n+1}\|^2 + \|\phi_{h,T}^{n+1}\|^2 + \|\phi_{h,S}^{n+1}\|^2 \right)
+ \frac{D_a^{-1}}{2}\|\phi_{h,\bu}^{n+1}\|^2 +\frac{\mu_1}{4}\|\phi_{h,\bu}^{n+1}\|^2
+
\frac{\mu_2}{4}\|\phi_{h,T}^{n+1}\|^2 
+ 
\frac{\mu_3}{4}\|\phi_{h,S}^{n+1}\|^2  \\
+
\frac{\nu}{4}\|\nabla\phi_{h,\bu}^{n+1}\|^2 
+\frac{\kappa}{4}\|\nabla\phi_{h,T}^{n+1}\|^2 
+\frac{D_c}{4}\|\nabla\phi_{h,S}^{n+1}\|^2
\\
\leq C\bigg[ D_a+ 
\mu_1^{-1}(\beta_T^2 + \beta_c^2) + C_{PF}^2(\kappa^{-1} +  D_c^{-1})
 \bigg] h^{2k+2}
+ C\bigg[
(\nu^{-1} + \kappa^{-1} + D_c^{-1} ) h^{2k+2}
+ \nu^{-1} h^{4k+4}
 \bigg]\\
+ C  \left[D_a^{-1} +\mu_1 +\mu_2 +\mu_3 \right]h^{2k+2}
+ C\Delta t^2\left[D_a + C_{PF}^2(\kappa^{-1} + D_c^{-1}) + D_a(\beta_T^2 +\beta_c^2) \right]\\
+ \frac{1}{2\Delta t}\left[\|\phi_{h,\bu}^{n}\|^2 + \|\phi_{h,T}^{n}\|^2 + \|\phi_{h,S}^{n}\|^2  \right].
\end{gather*}
First apply the Poincar{\'{e}}-Friedrich Inequality on the left hand side, and denote
\begin{align*}
\lambda_1:={D_a^{-1}} +\frac{\mu_1}{2} +\frac{\nu C_{PF}^{-2}}{2},\, \,  \lambda_2:=\frac{\mu_2}{2} +\frac{\kappa \,  C_{PF}^{-2}}{2},\,\,
\lambda_3:=\frac{\mu_3}{2} +\frac{D_c\, C_{PF}^{-2}}{2}.
\end{align*}
Then multiplying by $2\Delta t$ 
yields
\begin{gather*}
\left(1+\lambda_1\Delta t\right)\|\phi_{h,\bu}^{n+1}\|^2 
+
\left(1+\lambda_2\Delta t\right)\|\phi_{h,T}^{n+1}\|^2
+
\left(1+\lambda_3\Delta t\right)\|\phi_{h,S}^{n+1}\|^2
\\
\leq 
 C\Delta t\bigg[\left( D_a+ 
\mu_1^{-1}(\beta_T^2 + \beta_c^2) + C_{PF}^2(\kappa^{-1} + D_c^{-1})\right) h^{2k+2} + (\nu^{-1} + \kappa^{-1}+ D_c^{-1})h^{2k+2} +\nu^{-1}h^{4k+4}\\
+(D_a^{-1} + \mu_1 + \mu_2\ +\mu_3 )h^{2k+2}
+ \Delta t^2 \bigg[D_a + C_{PF}^2(\kappa^{-1} + D_c^{-1}) + D_a(\beta_T^2 +\beta_c^2)\bigg]\\
+ \|\phi_{h,\bu}^{n}\|^2 + \|\phi_{h,T}^{n}\|^2 + \|\phi_{h,S}^{n}\|^2 .
\end{gather*}
Denoting 
\begin{align*}
K:= \left( D_a+ 
\mu_1^{-1}(\beta_T^2 + \beta_c^2) + C_{PF}^2(\kappa^{-1} + D_c^{-1})\right) h^{2k+2}  + (\nu^{-1} + \kappa^{-1}+ D_c^{-1})h^{2k+2} +\nu^{-1}h^{4k+4}\\
+(D_a^{-1} + \mu_1 + \mu_2\ +\mu_3 )h^{2k+2}
+ \Delta t^2 (D_a + C_{PF}^2(\kappa^{-1} + D_c^{-1}) + D_a(\beta_T^2 +\beta_c^2)\| g\|_{L^{\infty}}^2),
\end{align*}
and taking $ \lambda:=\min\{ \lambda_1, \lambda_2, \lambda_3\}$ yields
\begin{align*}
\left(1+\lambda\Delta t\right)\bigg[\|\phi_{h,\bu}^{n+1}\|^2 + \|\phi_{h,T}^{n+1}\|^2 +\|\phi_{h,S}^{n+1}\|^2
\bigg] \leq C\Delta t K + \bigg[\|\phi_{h,\bu}^{n}\|^2 + \|\phi_{h,T}^{n}\|^2 + \|\phi_{h,S}^{n}\|^2 \bigg].
\end{align*}
Using induction together with the triangle inequality finishes the proof.
\end{proof}
%\subsection{}
\begin{theorem}[Long time $L^2$-accuracy with $\mu_1>$ and $ \mu_2 = \mu_3= 0$]
Let $\bu\in L^{\infty}(0, \infty; H^{k+1}(\Omega))$, $p\in L^{\infty}(0, \infty; H^{k}(\Omega))$ and $T, S \in L^{\infty}(0, \infty; H^{k+1}(\Omega))$ be true solutions of (\ref{bous}). Assume that $\Delta t$ is sufficiently small such that it holds
\begin{align*}
 \min\left\{{C\nu^{-1}\left( \|\bta_{\bu}^{n+1}\|^2_{L^{\infty}} + \|\bu^{n+1}\|^2_{L^{\infty}}\right)},\,\, {C D_a\beta_T^2 \| g\|_{L^{\infty}}^2}, \,\, {C D_a \beta_c^2 \| g\|_{L^{\infty}}^2} \right \}< \Delta t.
\end{align*}
%\begin{align*}
%\Delta t < \max \left\{\frac{1}{C\nu^{-1}\left( \|\bta_{\bu}^{n+1}\|^2_{L^{\infty}} + \|\bu^{n+1}\|^2_{L^{\infty}}\right)} , \frac{1}{D_a\beta_T^2 \| g\|_{L^{\infty}}^2}, \frac{1}{D_a\beta_S^2 \| g\|_{L^{\infty}}^2} \right \}.
%\end{align*}
In addition, we assume that nudging parameters satisfy the following
\begin{align*}
\max\{1, \, \,\nu^{-1}\left( \|\bta_{\bu}^{n+1}\|^2_{L^{\infty}} + \|\bu^{n+1}\|^2_{L^{\infty}}\right), \, \,\kappa^{-1}\left( \|\bta_{T}^{n+1}\|^2_{L^{\infty}} + \|T^{n+1}\|^2_{L^{\infty}}\right),\, \, D_c^{-1}\left( \|\bta_{S}^{n+1}\|^2_{L^{\infty}} + \|S^{n+1}\|^2_{L^{\infty}}\right)   \}\\
\leq \mu_1\leq \frac{\nu}{CH^2}.
\end{align*}
Denote 
\begin{align*}
K:=  ( D_a +\mu_1^{-1}(\beta_T^2 + \beta_c^2)+ C_{PF}^2(\kappa^{-1} + D_c^{-1}) ) h^{2k+2} + (\nu^{-1} + \kappa^{-1}+ D_c^{-1} + D_a^{-1} +\mu_1)\,h^{2k+2} +\nu^{-1}h^{4k+4}\\
+ \Delta t^2 (D_a (1 + \beta_T^2 +\beta_c^2) + C_{PF}^2(\kappa^{-1} + D_c^{-1})),
\end{align*}
$ \lambda:=\min\{ \lambda_1, \lambda_2, \lambda_3\}$ with
\begin{align*}
\lambda_1:={D_a^{-1}} +\frac{\mu_1}{2} +\frac{\nu C_{PF}^{-2}}{2},\, \,  \lambda_2:=\frac{\kappa \,  C_{PF}^{-2}}{2},\,\,
\lambda_3:=\frac{D_c\, C_{PF}^{-2}}{2}.
\end{align*}
Then, for any time level $t^{n+1}, \,\, n=0,1,...$, the errors between the true solutions and solutions of Algorithm~\ref{algbe} satisfies the bound
\begin{align*}
\|E_{\bu}^{n+1}\|^2 + \|E_{T}^{n+1}\|^2 +\|E_{h,S}^{n+1}\|^2
\leq C \lambda^{-1} K + \left(1+\lambda\Delta t\right)^{-(n+1)}\left(\|\phi_{h,\bu}^{0}\|^2 + \|\phi_{h,T}^{0}\|^2 + \|\phi_{h,S}^{0}\|^2 \right).
\end{align*}
\end{theorem}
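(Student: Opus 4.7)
The proof will follow the same energy-method template as Theorem~\ref{teo1}, but it must be adapted to the absence of nudging on the temperature and concentration equations. The plan is to write the true-solution equations at $t^{n+1}$, subtract the discrete equations \eqref{disc1}--\eqref{disc3} with the corresponding $\mu_2=\mu_3=0$ terms, split the errors via $\bE_{\bu}^{n+1}=\phi_{h,\bu}^{n+1}-\bta_{\bu}^{n+1}$ (and analogously for $T$ and $S$), and then test with $\bv_h=\phi_{h,\bu}^{n+1}$, $\Psi_h=\phi_{h,T}^{n+1}$, $\chi_h=\phi_{h,S}^{n+1}$. Because Scott--Vogelius elements are used, $\phi_{h,\bu}^{n+1}\in\bV_h$ and the pressure error term vanishes.

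The bounds for the individual terms on the right-hand sides of the three resulting identities are essentially those already carried out for \eqref{vel2}, \eqref{temp2}, \eqref{con2}, with the simplification that all $\mu_2$- and $\mu_3$-interpolation contributions in the $T$- and $S$-equations are zero and therefore disappear from both sides. In particular, I will reuse the Taylor-expansion bounds for the discrete time derivatives, the H\"older/Young estimates for the skew-symmetric trilinear forms (splitting $\bE_{\bu}^{n+1}$ into $\phi_{h,\bu}^{n+1}-\bta_{\bu}^{n+1}$ to transfer the gradient onto $\phi_{h,\bu}^{n+1}$), and the interpolation inequalities \eqref{interp1}--\eqref{interp2} to dispose of $\mu_1$-interpolation terms. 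The CFL-type smallness of $\Delta t$ then kills the jump terms $\|\phi_{h,\bu}^{n+1}-\phi_{h,\bu}^{n}\|^2$, $\|\phi_{h,T}^{n+1}-\phi_{h,T}^{n}\|^2$ and $\|\phi_{h,S}^{n+1}-\phi_{h,S}^{n}\|^2$ produced by the identity $2(a-b,a)=a^2-b^2+(a-b)^2$, and the upper bound $\mu_1\le \nu/(CH^2)$ absorbs the $\mu_1 H^2\|\nabla\phi_{h,\bu}^{n+1}\|^2$ nudging remainder into $\nu\|\nabla\phi_{h,\bu}^{n+1}\|^2/4$.

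The main obstacle, and the one genuinely different step from Theorem~\ref{teo1}, is how to close the Gr\"onwall-type estimate in the absence of coercive $\mu_2\|\phi_{h,T}^{n+1}\|^2$ and $\mu_3\|\phi_{h,S}^{n+1}\|^2$ terms. In Theorem~\ref{teo1} these were precisely the sinks used to absorb the buoyancy contribution in \eqref{farkterim} and to absorb $\kappa^{-1}(\|\bta_T^{n+1}\|_\infty^2+\|T^{n+1}\|_\infty^2)\|\phi_{h,\bu}^{n+1}\|^2$ and $D_c^{-1}(\|\bta_S^{n+1}\|_\infty^2+\|S^{n+1}\|_\infty^2)\|\phi_{h,\bu}^{n+1}\|^2$ into $\mu_1$. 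With $\mu_2=\mu_3=0$, I will instead retain the full diffusive sinks $\tfrac{\kappa}{2}\|\nabla\phi_{h,T}^{n+1}\|^2$ and $\tfrac{D_c}{2}\|\nabla\phi_{h,S}^{n+1}\|^2$ and split them into two halves via Poincar\'e--Friedrichs: one half, of the form $\tfrac{\kappa C_{PF}^{-2}}{2}\|\phi_{h,T}^{n+1}\|^2$ (resp.\ $\tfrac{D_c C_{PF}^{-2}}{2}\|\phi_{h,S}^{n+1}\|^2$), produces the decay constants $\lambda_2$ and $\lambda_3$, while the other half is used to absorb the buoyancy cross-term $D_a(\beta_T^2\|\phi_{h,T}^{n+1}\|^2+\beta_c^2\|\phi_{h,S}^{n+1}\|^2)\|\bm g\|_\infty^2$ (which is why the hypotheses require $\Delta t$ smaller than $D_a\beta_T^2\|g\|_\infty^2$ and $D_a\beta_c^2\|g\|_\infty^2$, and why the coarse-mesh condition $H^2\le \kappa/(C\mu_{\text{eff}})$ is not needed for $\mu_2,\mu_3$ since those terms vanish). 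Analogously, the $\kappa^{-1}$ and $D_c^{-1}$ contributions involving $\|\phi_{h,\bu}^{n+1}\|^2$ are still absorbed into $\mu_1/4$ thanks to the stated lower bound on $\mu_1$.

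Once every right-hand side term is either absorbed into the left-hand dissipation or converted into $h^{2k+2}$, $h^{4k+4}$ or $\Delta t^2$ data via the approximation properties of $\bX_h, Y_h, W_h$ (and into the bounded quantity $\mu_1^{-1}(\beta_T^2+\beta_c^2)h^{2k+2}$ for the surviving coupling term in \eqref{farkterim}), summing the three discrete energy inequalities and multiplying by $2\Delta t$ yields
\begin{equation*}
(1+\lambda_1\Delta t)\|\phi_{h,\bu}^{n+1}\|^2+(1+\lambda_2\Delta t)\|\phi_{h,T}^{n+1}\|^2+(1+\lambda_3\Delta t)\|\phi_{h,S}^{n+1}\|^2\le C\Delta t\,K+\|\phi_{h,\bu}^{n}\|^2+\|\phi_{h,T}^{n}\|^2+\|\phi_{h,S}^{n}\|^2,
\end{equation*}
with $\lambda_1,\lambda_2,\lambda_3$ as stated and $K$ as in the theorem. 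Taking $\lambda=\min\{\lambda_1,\lambda_2,\lambda_3\}$, iterating (a geometric sum for the data term and a power of $(1+\lambda\Delta t)^{-1}$ for the initial data), and concluding with the triangle inequality $\|\bE_\bullet^{n+1}\|\le\|\phi_{h,\bullet}^{n+1}\|+\|\bta_\bullet^{n+1}\|$ (whose $\bta$-part is controlled by the approximation estimates and thus absorbed into $\lambda^{-1}K$) finishes the argument.
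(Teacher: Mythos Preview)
Your proposal is correct and follows essentially the same route as the paper: reduce to the proof of Theorem~\ref{teo1} with $\mu_2=\mu_3=0$, and compensate for the missing $\mu_2\|\phi_{h,T}^{n+1}\|^2$, $\mu_3\|\phi_{h,S}^{n+1}\|^2$ sinks by leaning on the diffusive terms $\kappa\|\nabla\phi_{h,T}^{n+1}\|^2$, $D_c\|\nabla\phi_{h,S}^{n+1}\|^2$ together with Poincar\'e--Friedrichs.

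The only difference is cosmetic: the paper applies Poincar\'e inside the re-estimation of \eqref{farkterim} itself, replacing $\|\phi_{h,T}^{n+1}\|$ and $\|\phi_{h,S}^{n+1}\|$ by $C_{PF}\|\nabla\phi_{h,T}^{n+1}\|$ and $C_{PF}\|\nabla\phi_{h,S}^{n+1}\|$ on the right-hand side and then absorbing these into the diffusion, whereas you keep the old $L^2$-bound on the buoyancy term and instead convert a piece of the diffusion on the left into an $L^2$ sink. These are equivalent. One caveat: your parenthetical attributing the $\Delta t$ hypotheses to this absorption step is misplaced---those conditions are used (in both the paper and in your own plan) only to drop the jump terms $\|\phi_{h,\cdot}^{n+1}-\phi_{h,\cdot}^{n}\|^2$ coming from the polarization identity; the buoyancy absorption relies purely on the diffusive coercivity (implicitly requiring $\kappa$ and $D_c$ not too small relative to $C_{PF}^2\beta_T^2\|\bm g\|_{L^\infty}^2$ and $C_{PF}^2\beta_c^2\|\bm g\|_{L^\infty}^2$, a condition neither you nor the paper states explicitly).
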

\begin{proof}
Take $\mu_2 = \mu_3=0$ in Algorithm~\ref{algbe}, and  proceed as in the proof of Theorem 3.1. The error equations are the same with \eqref{vel1}, \eqref{temp1} and \eqref{con1}. Moreover,  all right hand side terms are bounded identically, except the term which is estimated below as follows
\begin{align*}
&\big((\beta_T E_T^{n+1}  + \beta_c E_S^{n+1} \big)\bm g, \phi_{h,\bu}^{n+1})
 = \left(\left(\beta_T \eta_T^{n+1} + \beta_c \eta_S^{n+1} \right)\bm g, \phi_{h,\bu}^{n+1}\right) - \left((\beta_T \phi_{h,T}^{n+1} + \beta_c \phi_{h,S}^{n+1} \right)\bm g, \phi_{h,\bu}^{n+1})\\
&\leq \left(\beta_T \|\eta_T^{n+1}\| + \beta_c \|\eta_S^{n+1}\| \right)\|\bm g\|_{L^{\infty}}\|  \phi_{h,\bu}^{n+1}\| + \left(\beta_T C_{PF} \|\nabla\phi_{h,T}^{n+1}\| + \beta_c C_{PF}\|\nabla\phi_{h,S}^{n+1}\| \right)\|\bm g\|_{L^{\infty}}\| \phi_{h,\bu}^{n+1}\|\\
& 
\leq C \mu_1^{-1}\left(\beta_T^2 \|\eta_T^{n+1}\|^2 + \beta_c^2 \|\eta_S^{n+1}\|^2 \right)\|\bm g\|^2_{L^{\infty}} +\frac{\mu_1}{8}\| \phi_{h,\bu}^{n+1}\|^2\\
&\, \, \, \, \, \, +C \, C_{PF}^2\left(\beta_T^2 \|\nabla\phi_{h,T}^{n+1}\|^2 + \beta_c^2 \|\nabla\phi_{h,S}^{n+1}\|^2 \right)\|\bm g\|_{L^{\infty}}^2 + \frac{D_a^{-1}}{12}\| \phi_{h,\bu}^{n+1}\|^2.
\end{align*} 
Replace this estimate with the right hand side of \eqref{farkterim}. Then following exactly same lines of the proof of Theorem 3.1 by denoting
$ \lambda:=\min\{ \lambda_1, \lambda_2, \lambda_3\},$ 
where
\begin{align*}
\lambda_1:={D_a^{-1}} +\frac{\mu_1}{2} +\frac{\nu C_{PF}^{-2}}{2},\, \,  \lambda_2:=\frac{\kappa \,  C_{PF}^{-2}}{2},\,\,
\lambda_3:=\frac{D_c\, C_{PF}^{-2}}{2},
\end{align*}
finishes the proof.

\end{proof}

\section{Numerical Experiments}
This section aims to present some numerical tests in order to see the effectiveness of the algorithm given with (\ref{disc1})-(\ref{disc3}). In the first test, a known analytical solution of the system is used to measure the error. Secondly, a more practical test, namely double-diffusive flow in a cavity is considered. We choose the Scott-Vogelious finite element pair along with a barycenter refined mesh for velocity-pressure couple. We also pick second order piecewise polynomials for temperature and concentration spaces. Furthermore, we select piecewise constants for coarse mesh spaces for all variables. The interpolation operators mentioned in the algorithm are considered to be $L^2$ projection operators onto coarse meshes which satisfy (\ref{interp1})-(\ref{interp2}). All simulations are carried out with public license finite element software FreeFem++ \cite{hec}. We would like to point out here that due to the virtue of the method proposed here, all the initial conditions for all variables are taken to be zero
\subsection{Rates of Convergence}
To verify the error rates predicted by the theory, we consider the particular analytical solution given by:
\begin{align}\label{truesol}
\textbf{u}=\left(%
\begin{array}{c}
\cos(y) \\
\sin(x)%
\end{array}%
\right)e^t,\quad
p=(x-y)(1+t),\quad
T=\sin(x+y)e^{1-t}, \quad
S=\cos(x+y)e^{1-t}.
\end{align}
with the parameters $\nu=D_c= \kappa=\beta_T=\beta_S=1$, $Da=\infty$ and the right hand side functions $F,G$ and $\Phi$ are chosen such that (\ref{truesol}) satisfies (\ref{bous}).
In general, we did the calculations for two different cases. We first run the code for all nudging parameters to be greater than zero. Then we have a rerun for the case $\mu_1>0, \mu_2=\mu_3=0$ in order to see whether there is any difference.
\subsubsection{CASE I: $\mu_1>0, \mu_2>0, \mu_3>0$}
In this case, we take $\mu_1=100, \mu_2=100, \mu_3=100$ so the nudging applies on all equations of the system. We first fix the time step to $\Delta t =10^{-3}$ for the time interval $[0,1]$. We measure the spatial error with $L^2$ norm. The results are given in Table \ref{table:tab1}.
\begin{table}[h!]
	\begin{center}
		
		\begin{tabular}{|c|c|c|c|c|c|c|}
			\hline
			$h$ & $\|\textbf{u}-\textbf{u}_h\|$ & Rate &$\|T-T_h\|$ & Rate& $\|S-S_h\| $ & Rate\\
			\hline
			1&0.0149 &--  &0.0023  &-- &0.0024&--  \\
			\hline
			1/2&0.0018 &3.05  &0.0012 &0.93&0.0019&0.33  \\
			\hline
			1/4&0.00021 &3.16  &0.00016  &2.90 &0.0002&3.20  \\
			\hline
			1/8&2.7e-5 &2.9  &2.15e-5  &2.92 &2.9e-5& 2.85 \\
			\hline
			1/16&3.3e-6 &3.07  &2.7e-6  &3.00&3.6e-6&3.00   \\
			\hline
			1/32&4.14e-7 &3.04 & 3.40e-7 & 2.99&4.62e-7&2.96   \\
			\hline
		\end{tabular}
		\caption{Spatial errors and rates of convergence for $\mu_1= \mu_2 =\mu_3 =100$.}
		\label{table:tab1}
	\end{center}
\end{table}
As could be seen from the Table \ref{table:tab1}, the rates of convergence are coherent with Theorem \ref{teo1}, since making use of the SV elements we expect an error of order $3$ for all variables. Thus we can conclude that the addition of the nudging term does not deteriorate the error rates and taking random initial values still results with correct numerical results thanks to the method we propose.

Next we fix the mesh width to $h=1/32$ and run the code for different nudging parameters $\mu_1= \mu_2 =\mu_3 =1,10,100,1000$. The results are given in Figure \ref{fig:expcon1}. One could easily discover the exponential convergence in time  from these figures for all variables. Also we can deduce from these figures that as the nudging parameters increase, we observe a faster convergence as expected.
\begin{figure}[H]
	\centerline{\hbox{
			\epsfig{figure=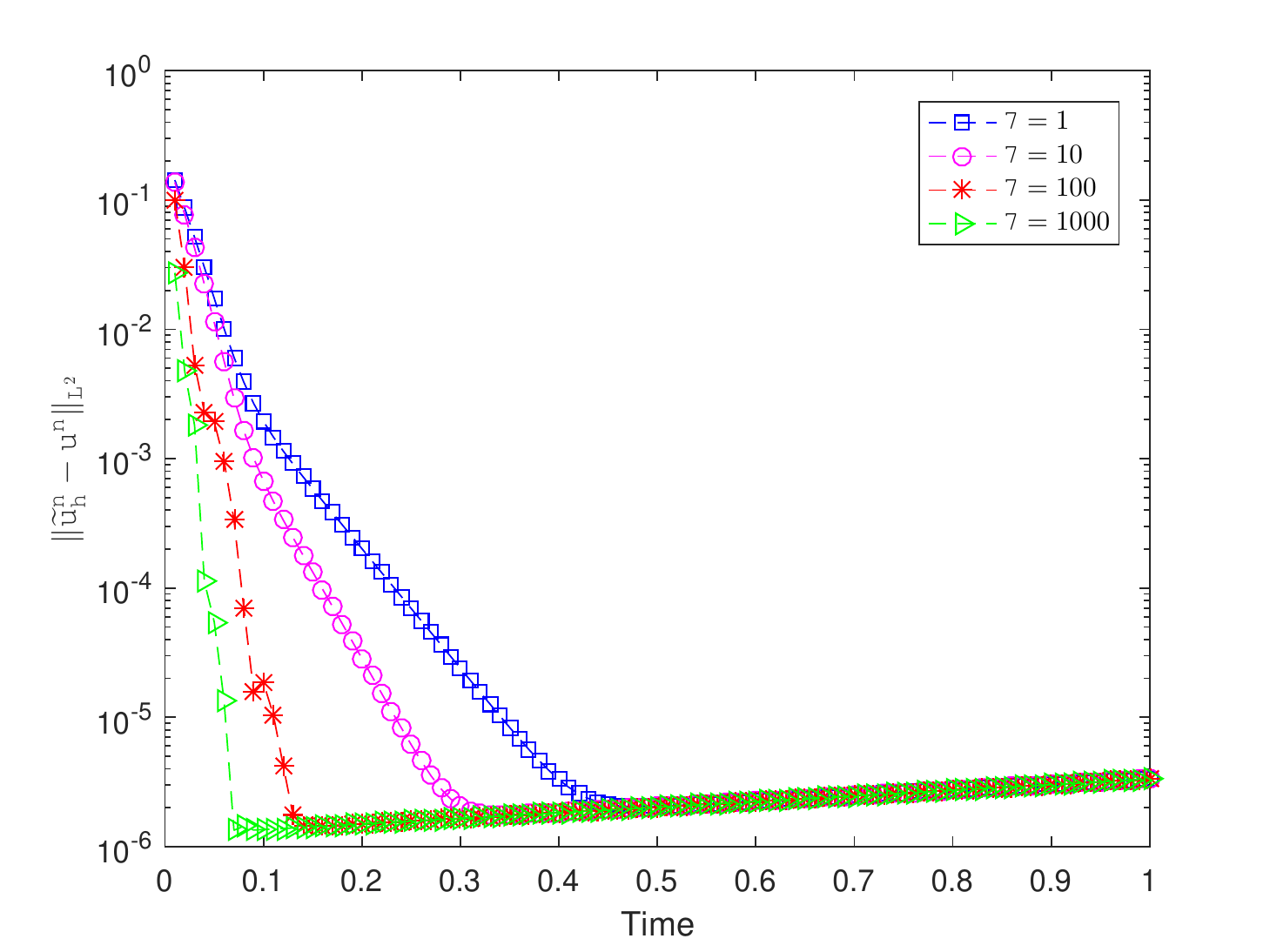,width=0.35\textwidth}
			\epsfig{figure=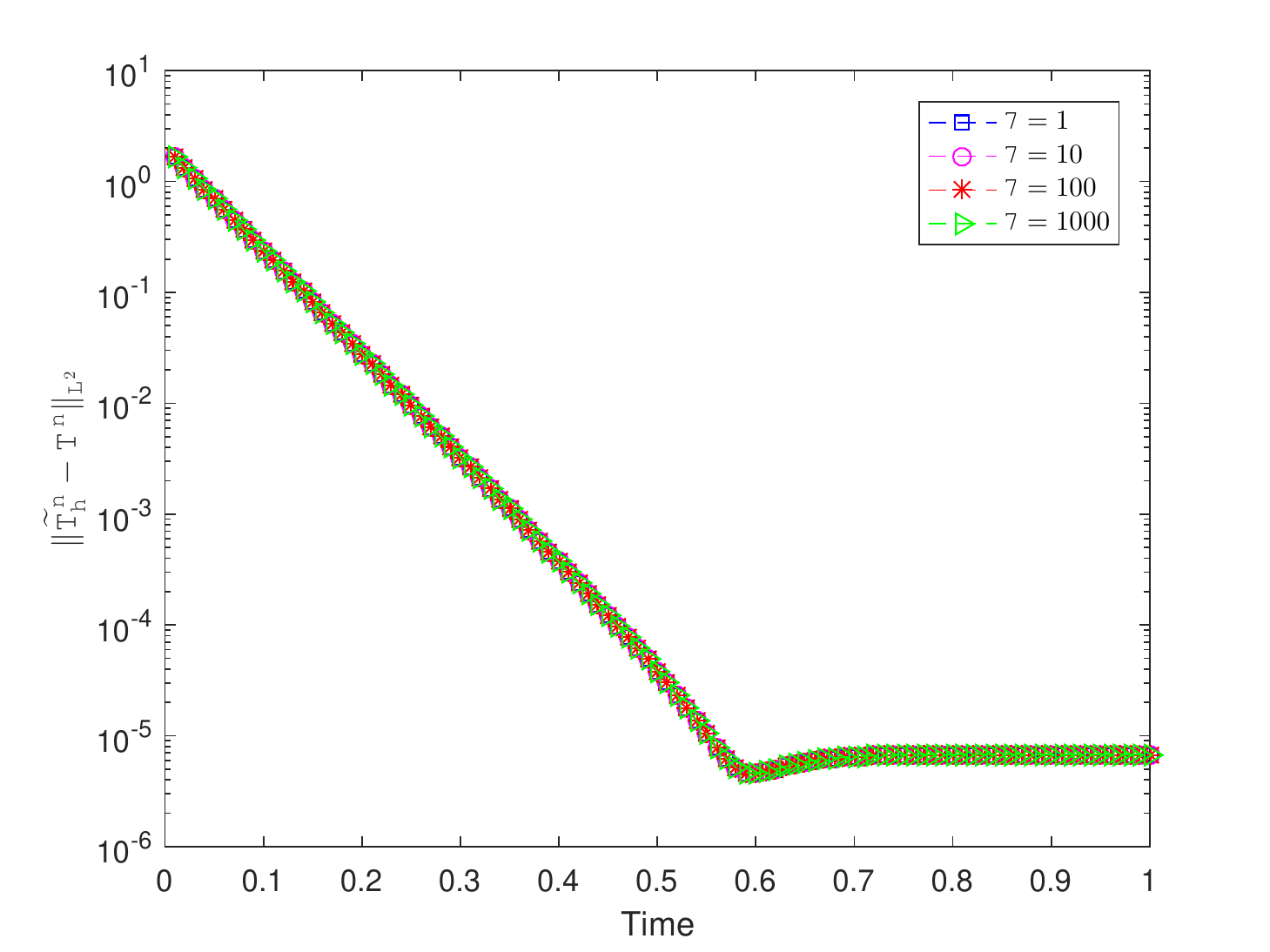,width=0.35\textwidth}		            \epsfig{figure=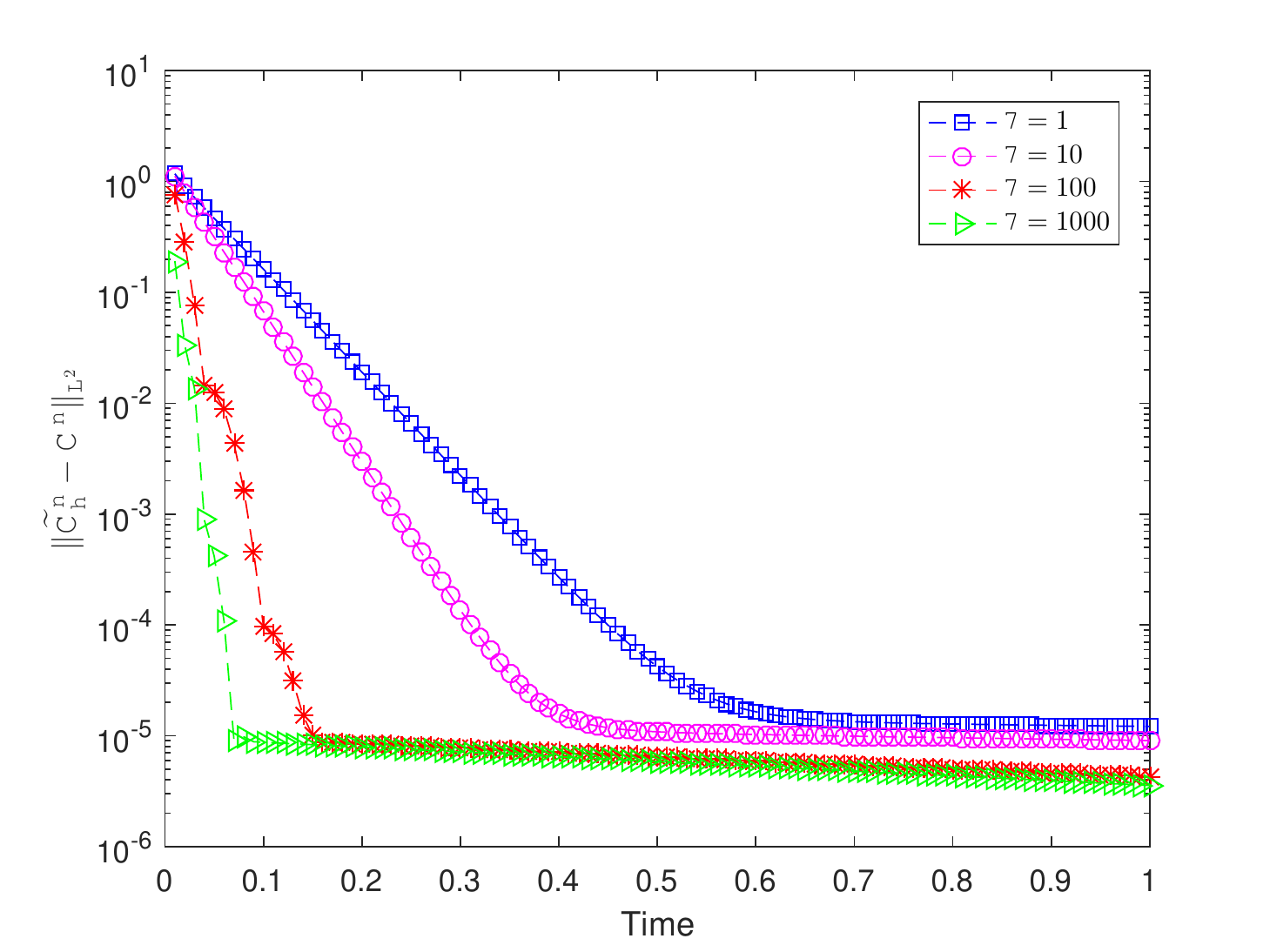,width=0.35\textwidth}
	}}
	\caption{Errors for different  $\mu_1, \mu_2, \mu_3$ for velocity, temperature and concentration respectively. }\label{fig:expcon1}
\end{figure}
%\begin{figure}[H]
%	\minipage{0.32\textwidth}
%	\includegraphics[scale=0.36]{L2VelErrors_h32_nb100}
%	\endminipage\hfill
%	\minipage{0.32\textwidth}
%	\includegraphics[scale=0.36]{L2TempErrors_h32_nb100}
%	\endminipage\hfill
%	\minipage{0.32\textwidth}%
%	\includegraphics[scale=0.36]{L2ConErrors_h32_nb100}
%	\endminipage
%	\caption{Errors for different  $\mu_1, \mu_2, \mu_3$ for velocity, temperature and concentration respectively. }\label{fig:expcon1}
%\end{figure}
\subsubsection{CASE II: $\mu_1>0, \mu_2=0, \mu_3=0$}
This time, we only nudge the velocity equation. This could be considered as, data might be collected for all variables in areal life system. We take $\mu_1=100, \mu_2=0, \mu_3=0$ so the nudging applies only on velocity equation . As in Case I,  the time step is fixed to $\Delta t =10^{-3}$ for the time interval $[0,1]$.. The results are given in Table~\ref{table:tab2}.
\begin{table}[h!]
	\begin{center}	
		\begin{tabular}{|c|c|c|c|c|c|c|}
			\hline
			$h$ & $\|\textbf{u}-\textbf{u}_h\|$ & Rate &$\|T-T_h\|$ & Rate& $\|S-S_h\| $ & Rate\\
			\hline
			1&0.0149 &--  &0.0023  &-- &0.0024&--  \\
			\hline
			1/2&0.0017 &3.13  &0.0012 &0.93&0.0019&0.33  \\
			\hline
			1/4&0.00021 &3.15  &0.00016  &2.90 &0.0002&3.20  \\
			\hline
			1/8&2.6e-5 &3.01  &2.15e-5  &2.92 &2.9e-5& 2.85 \\
			\hline
			1/16&3.3e-6 &2.97  &2.7e-6  &3.00&3.6e-6&3.00   \\
			\hline
			1/32&4.14e-7 &2.99 & 3.46e-7 & 2.99&4.79e-7&2.90   \\
			\hline
		\end{tabular}
		\caption{Spatial errors and rates of convergence for $\mu_1=100, \mu_2 =\mu_3 =0$.}
		\label{table:tab2}
	\end{center}
\end{table}
Again, we obtain the optimal rate of convergence in this case too. The results and errors are almost same with the previous case. Only there is a very slight difference in error rates as $h$ decreases. We should point out here that, taking $\mu_1=100, \mu_2 =\mu_3 =0$ results with a little longer CPU time to obtain these results. 

We now test the errors for fixed spatial step size, $h=1/32$ in this case too. The results are given in Figure~ \ref{fig:expcon2}. We compute the errors for $\mu_1=1,10,100,1000$ and the results still clearly indicate the exponential convergence. The results obtained in Figure \ref{fig:expcon2} are obtained for a little longer CPU time when compared with Case I. However, convergence to true solution is obtained faster for smaller $\mu_1$ values this time. Specifically, altering $\mu_1$ almost does not affect the convergence of temperature and concentration as illustrated in Figure \ref{fig:expcon2}.
\begin{figure}[H]
	\centerline{\hbox{
			\epsfig{figure=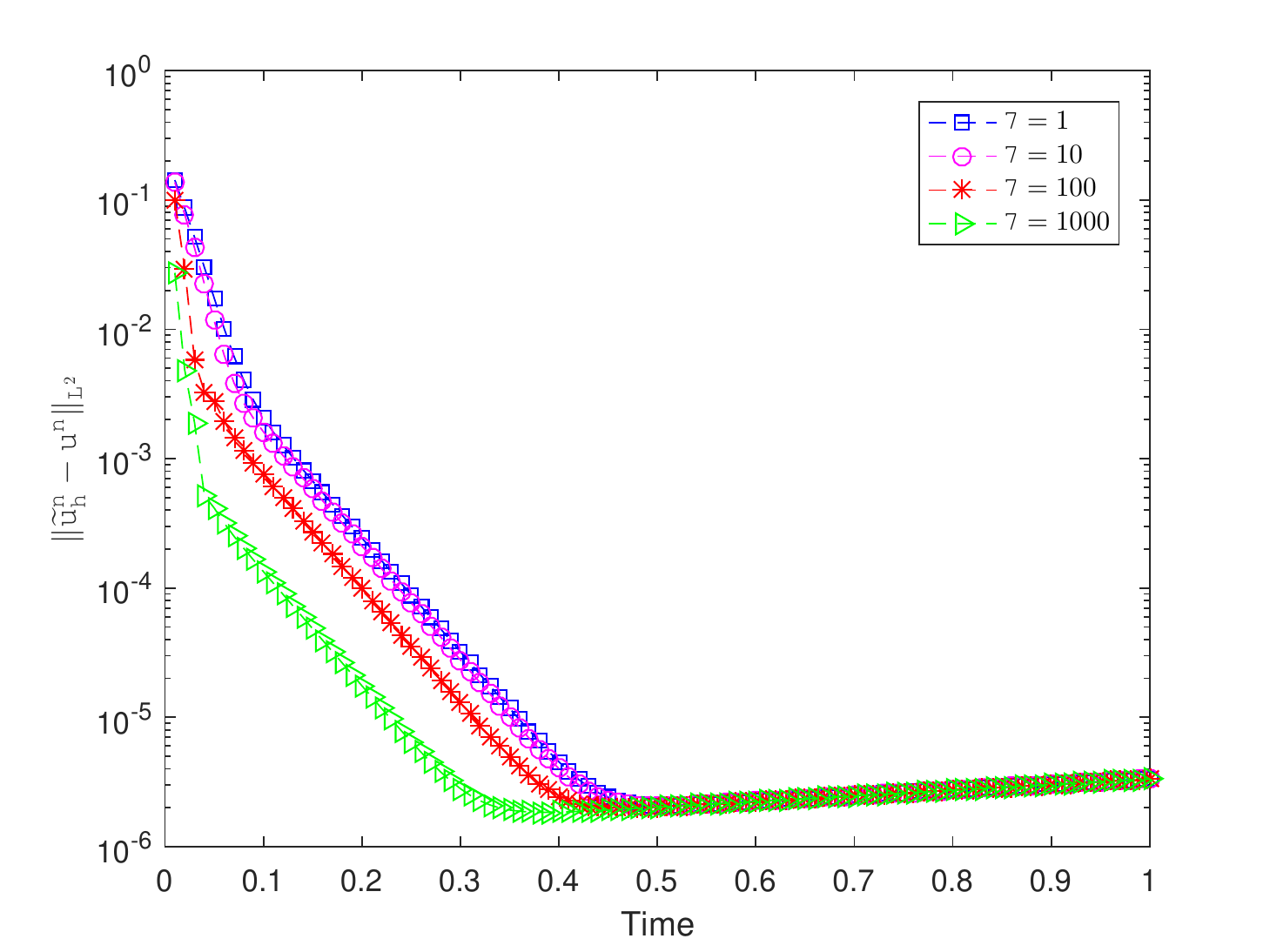,width=0.35\textwidth}
			\epsfig{figure=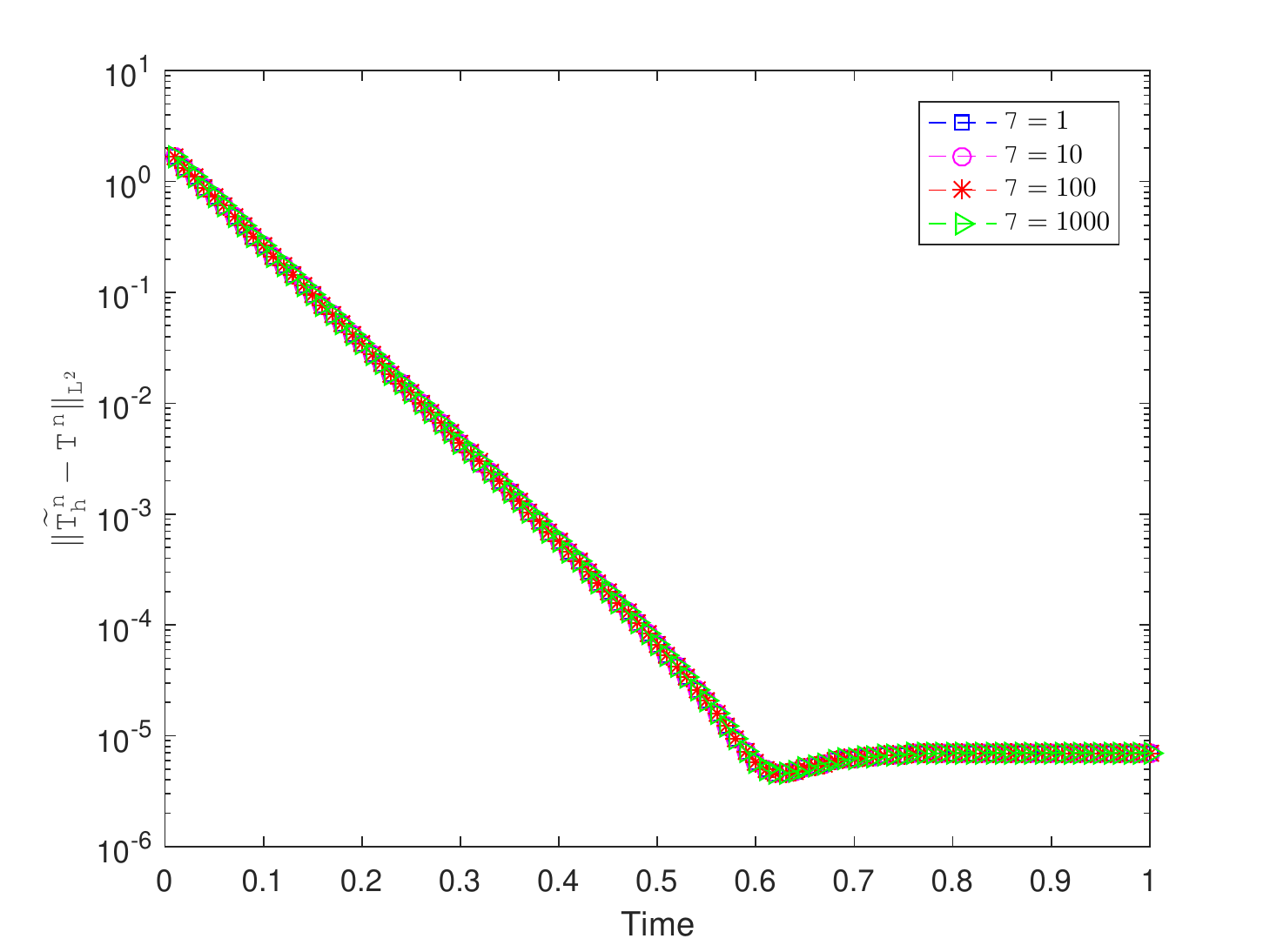,width=0.35\textwidth}
			\epsfig{figure=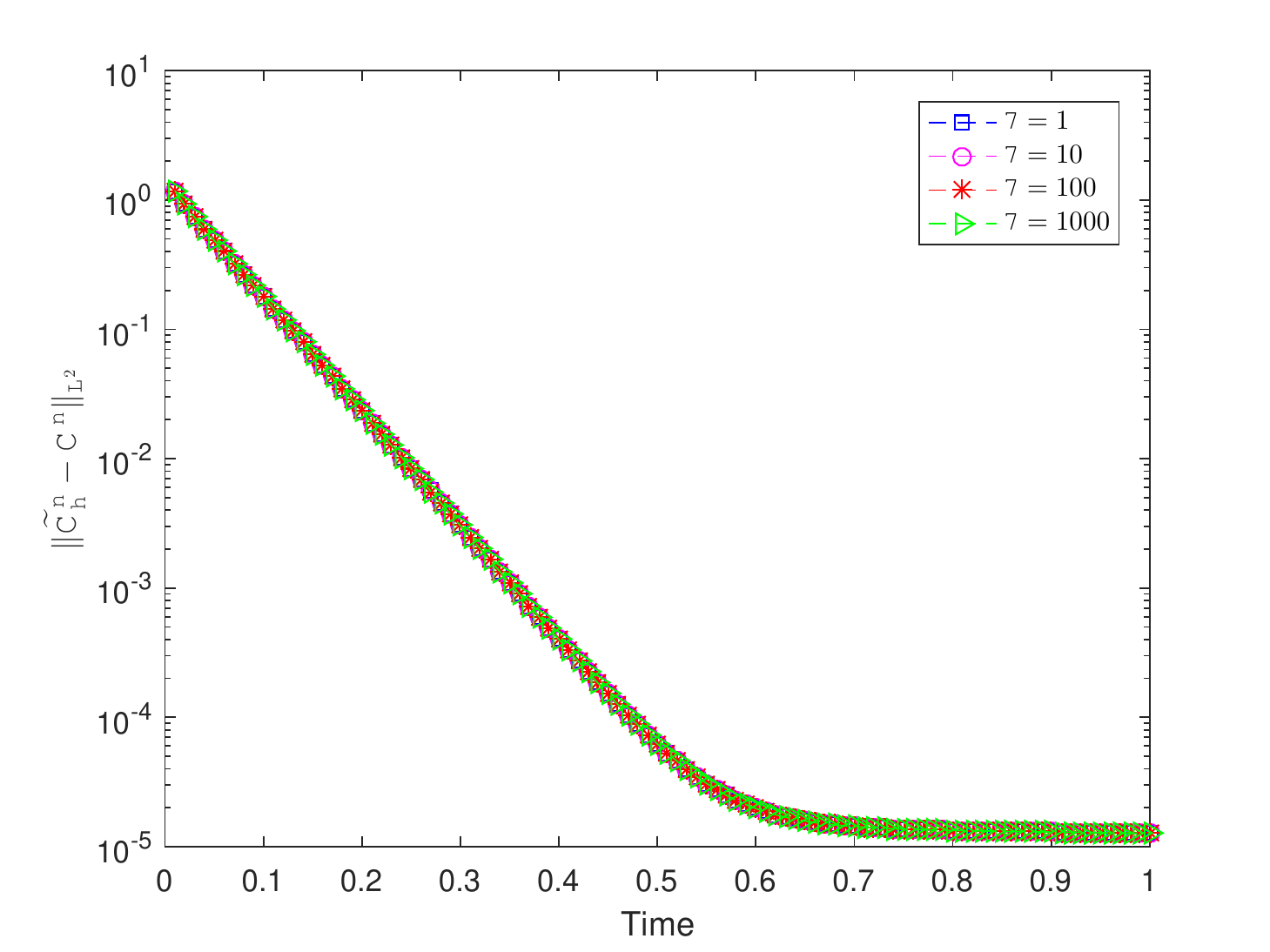,width=0.35\textwidth}
	}}
	\caption{Errors for different  $\mu_1, \mu_2, \mu_3$ for velocity, temperature and concentration respectively. }\label{fig:expcon2}
\end{figure}
\subsection{Flow in a rectangular cavity}
In this second test, we apply the scheme to a more practical test which is adapted from \cite{bizimdouble}. In this test, flow inside a porous cavity initiates by the effect of temperature and concentration differences along with the gravity. The domain is a rectangular cavity of height $2$ and width $1$. No-slip velocity boundary conditions are assumed at entire domain and vertical walls are kept at different temperature and concentration values. Horizontal walls are assumed to be adiabatic and impermeable. The computational domain is illustrated in Figure \ref{fig:newdomain}.

\begin{figure}[H]
	\centerline{\hbox{
			\epsfig{figure=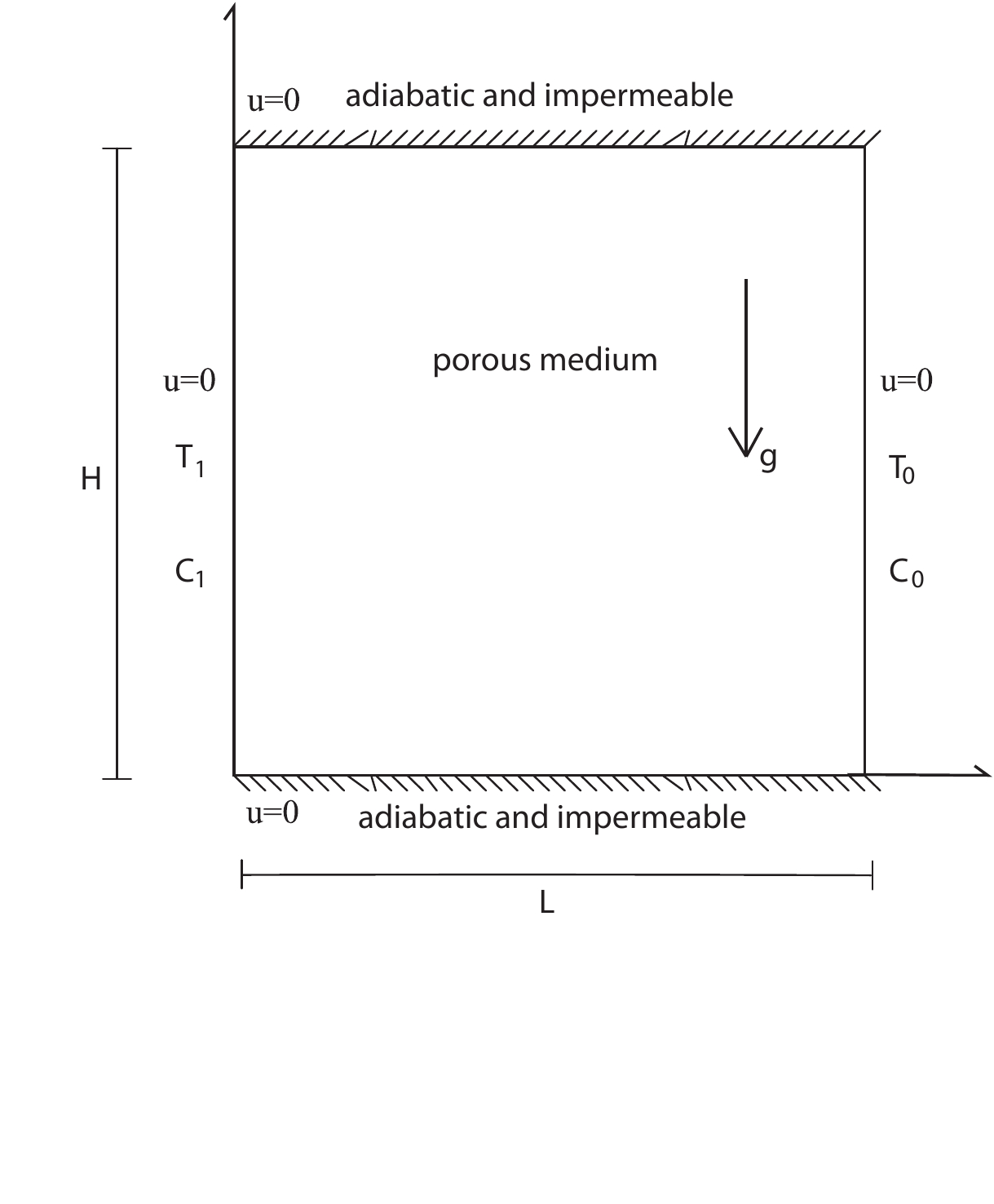,width=0.6\textwidth}
	}}
	\caption{\label{fig:newdomain} Computational domain with its boundary conditions}
\end{figure}
Here we pick $Pr=1, Ra=10^3, Le=2, N=0.8$. Time step size is taken as $\Delta t =0.02$. Inspired from \cite{leocda}, a DNS(Direct Numerical Simulation) first ran for $80$ time units (i.e. $4000$ time iterations) then the scheme takes part for another $80$ time units along with DNS simulation again. Our algorithm is sampled from DNS solutions at the beginning. We compute the convergence of our solution to DNS in time first. We again carried out the computations for  cases $\mu_1>0, \mu_2>0, \mu_3>0$ and $\mu_1>0, \mu_2=0, \mu_3=0$ distinctly.

We make our first run with $\mu_1=1\,\, \mbox{and}\,\, 10, \mu_2=1\,\,\mbox{and}\,\, 10,\,\, \mu_3=1\,\, \mbox{and}\,\, 10$. Errors measured in $L^2$ norm which consist of the difference between our numerical solutions and corresponding DNS solutions are depicted in Figure \ref{fig:dns1}.
\begin{figure}[H]
	\centerline{\hbox{
			\epsfig{figure=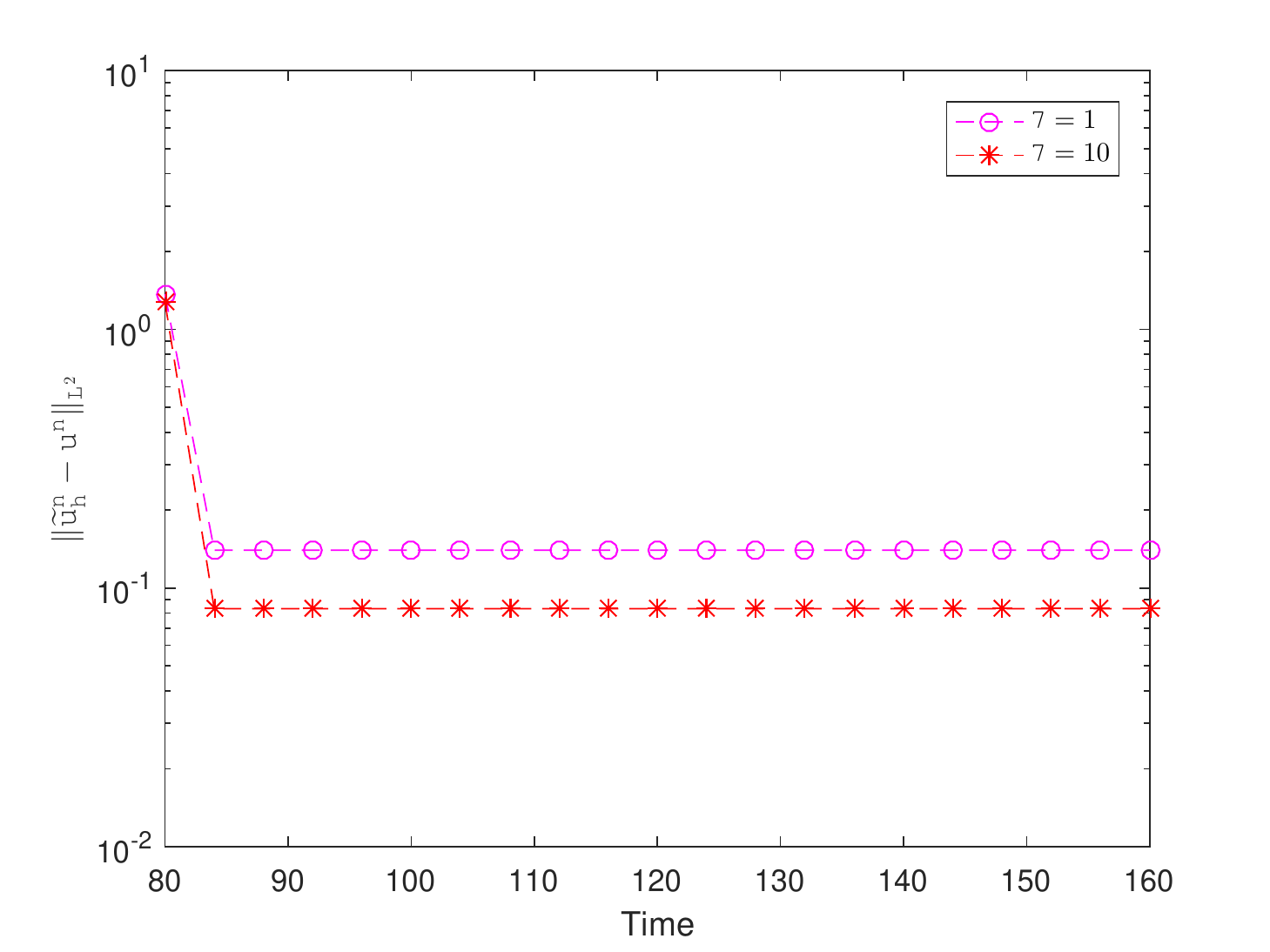,width=0.35\textwidth}
			\epsfig{figure=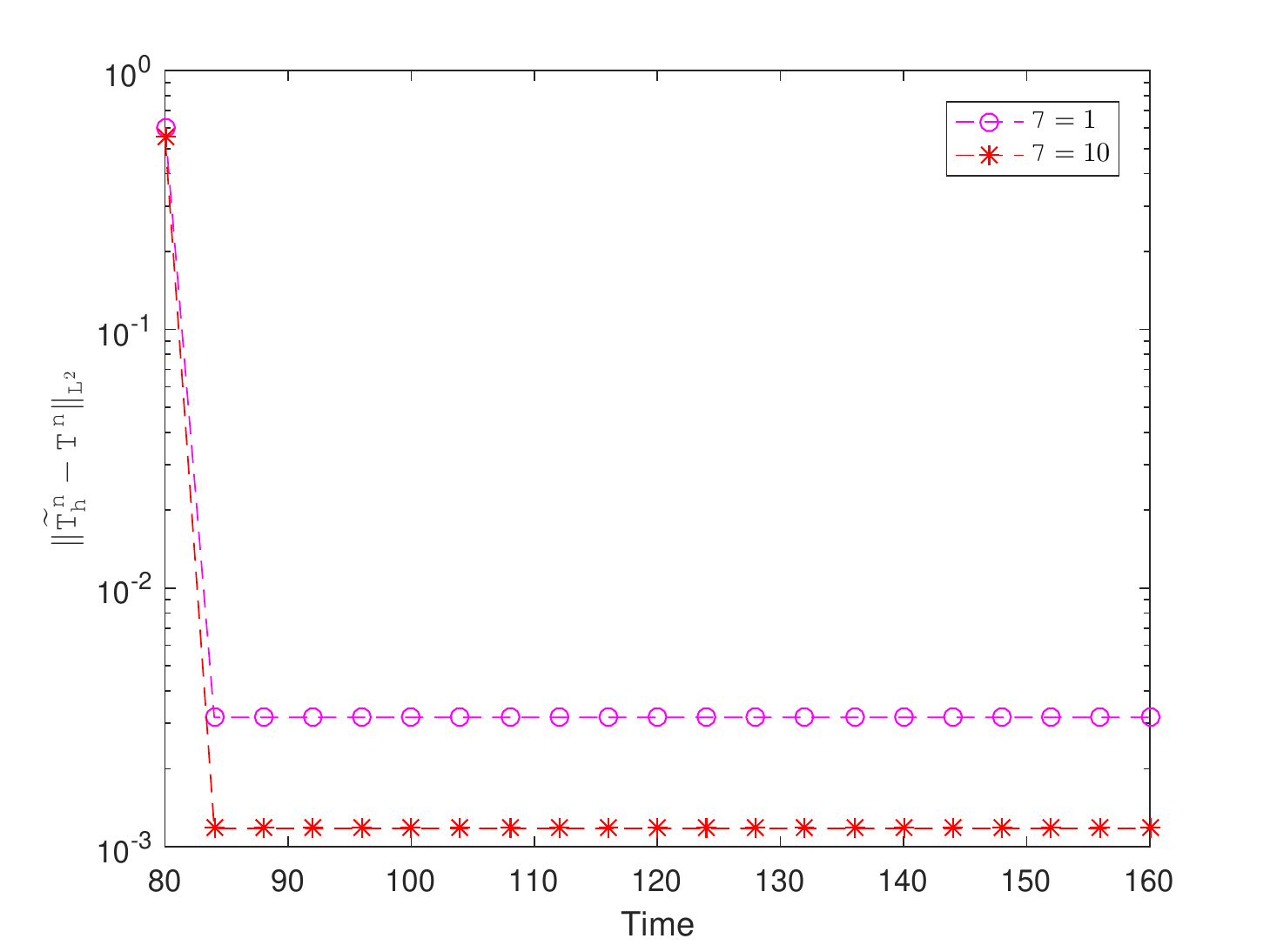,width=0.35\textwidth}
			\epsfig{figure=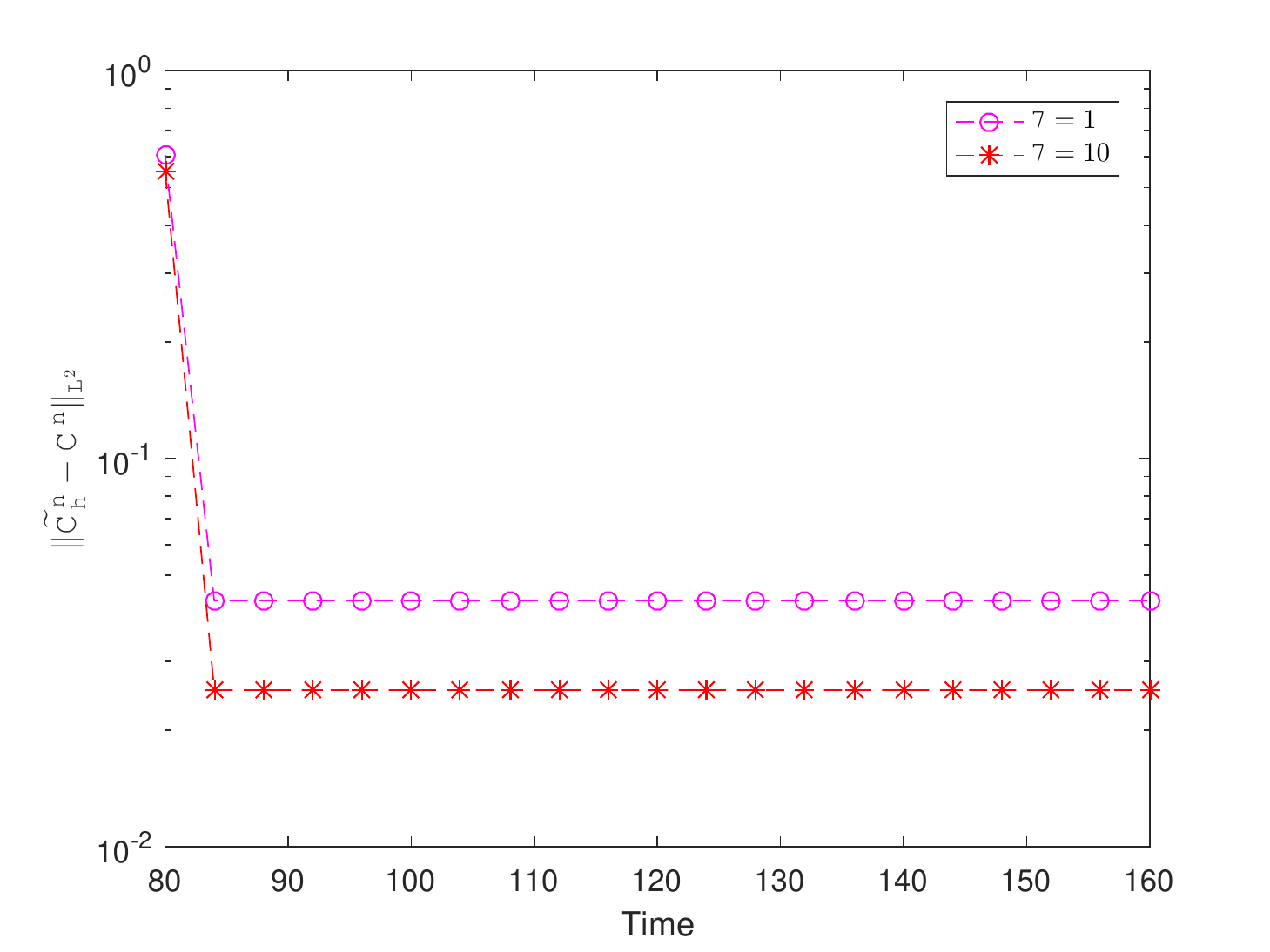,width=0.35\textwidth}
	}}
	\caption{\label{fig:dns1}Convergence of numerical solution to DNS for $\mu_1=1\,\, \mbox{and}\,\, 10, \mu_2=1\,\,\mbox{and}\,\, 10,\,\, \mu_3=1\,\, \mbox{and}\,\, 10$  }
\end{figure}

Then we make another run for the case $\mu_1=1\,\, \mbox{and}\,\, 10, \mu_2=0, \mu_3=0$. The results obtained from second case is given in Figure \ref{fig:dns2}.

\begin{figure}[H]
	\centerline{\hbox{
			\epsfig{figure=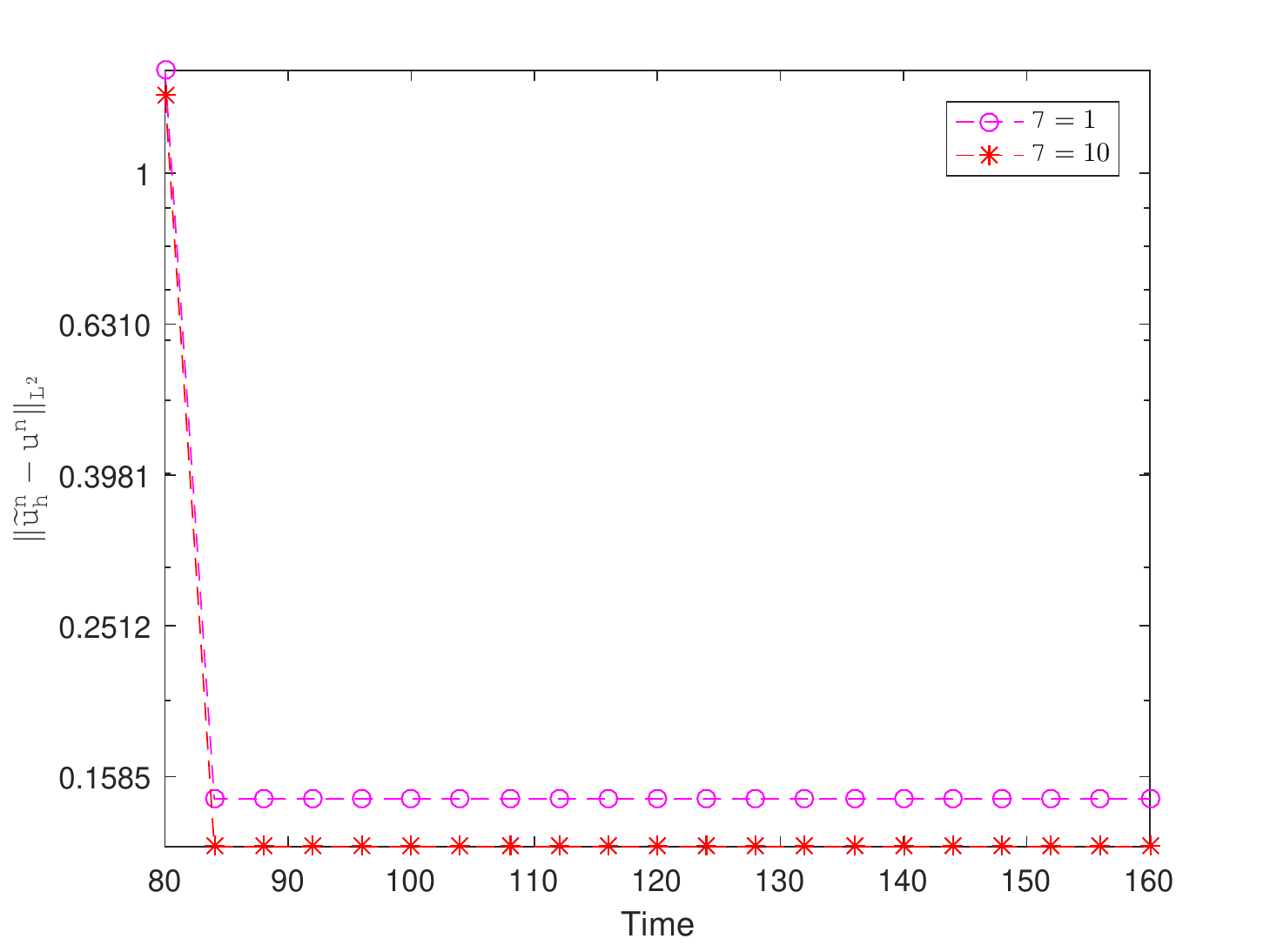,width=0.35\textwidth}
			\epsfig{figure=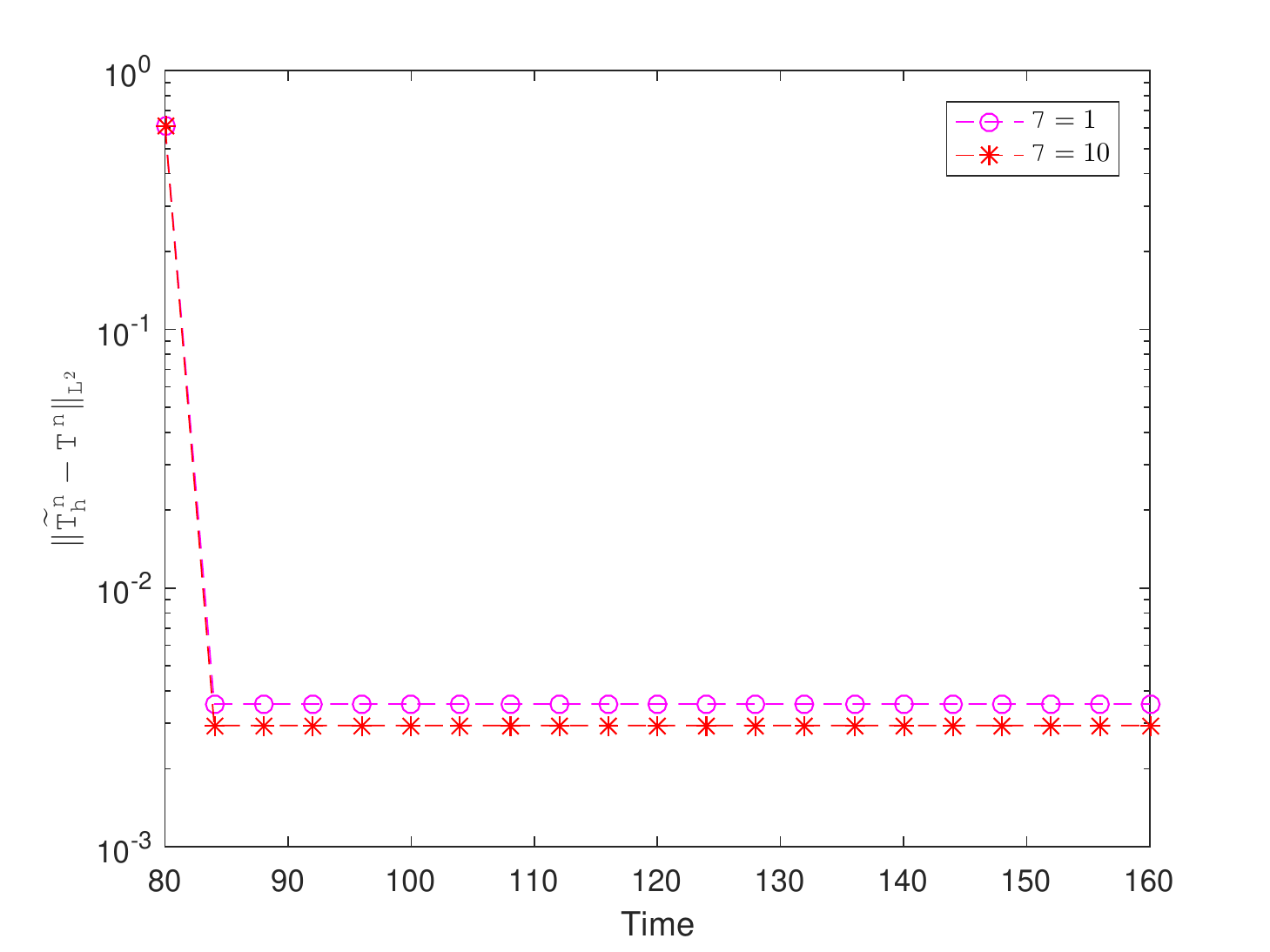,width=0.35\textwidth}
			\epsfig{figure=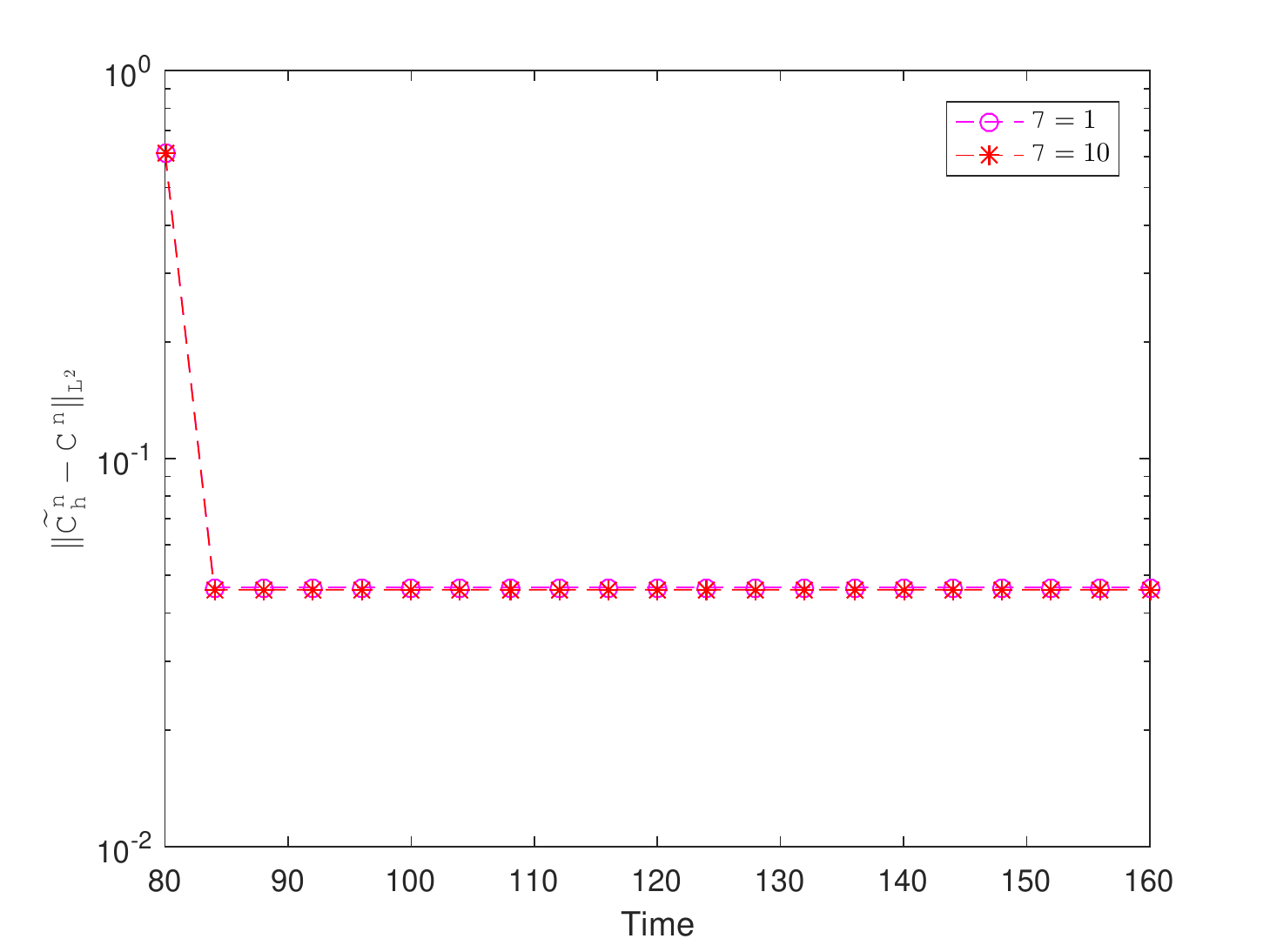,width=0.35\textwidth}
	}}
	\caption{\label{fig:dns2}Convergence of numerical solution to DNS for $\mu_1=1\,\, \mbox{and}\,\, 10, \mu_2=0, \mu_3=0$  }
\end{figure}
As one examines Figures \ref{fig:dns1} and \ref{fig:dns2}, it is easy to deduce that these results are almost identical. Convergence time and error magnitudes are almost same. The only difference to note might be, when $\mu_2=0, \mu_3=0$ the effect of $\mu_1$ is strictly decreased when compared with the case $\mu_2>0, \mu_3>0$ in which the magnitude of $\mu_1, \mu_2, \mu_3$ seems to effect convergence time and error magnitude.

Lastly, we draw some important flow patterns related to double-diffusive convection flow. We give these figures for both DNS and the scheme in order to see whether there is any difference. As in previous case we consider the cases $\mu_1>0, \mu_2>0, \mu_3>0$ and $\mu_1>0, \mu_2=0, \mu_3=0$. Figures \ref{fig:cav1} and \ref{fig:cav2} shows the results of the simulation for $\mu_1=\mu_2=\mu_3=1$ and $\mu_1=\mu_2=\mu_3=10$ respectively.

\begin{figure}[H]
	\centerline{\hbox{
			\epsfig{figure=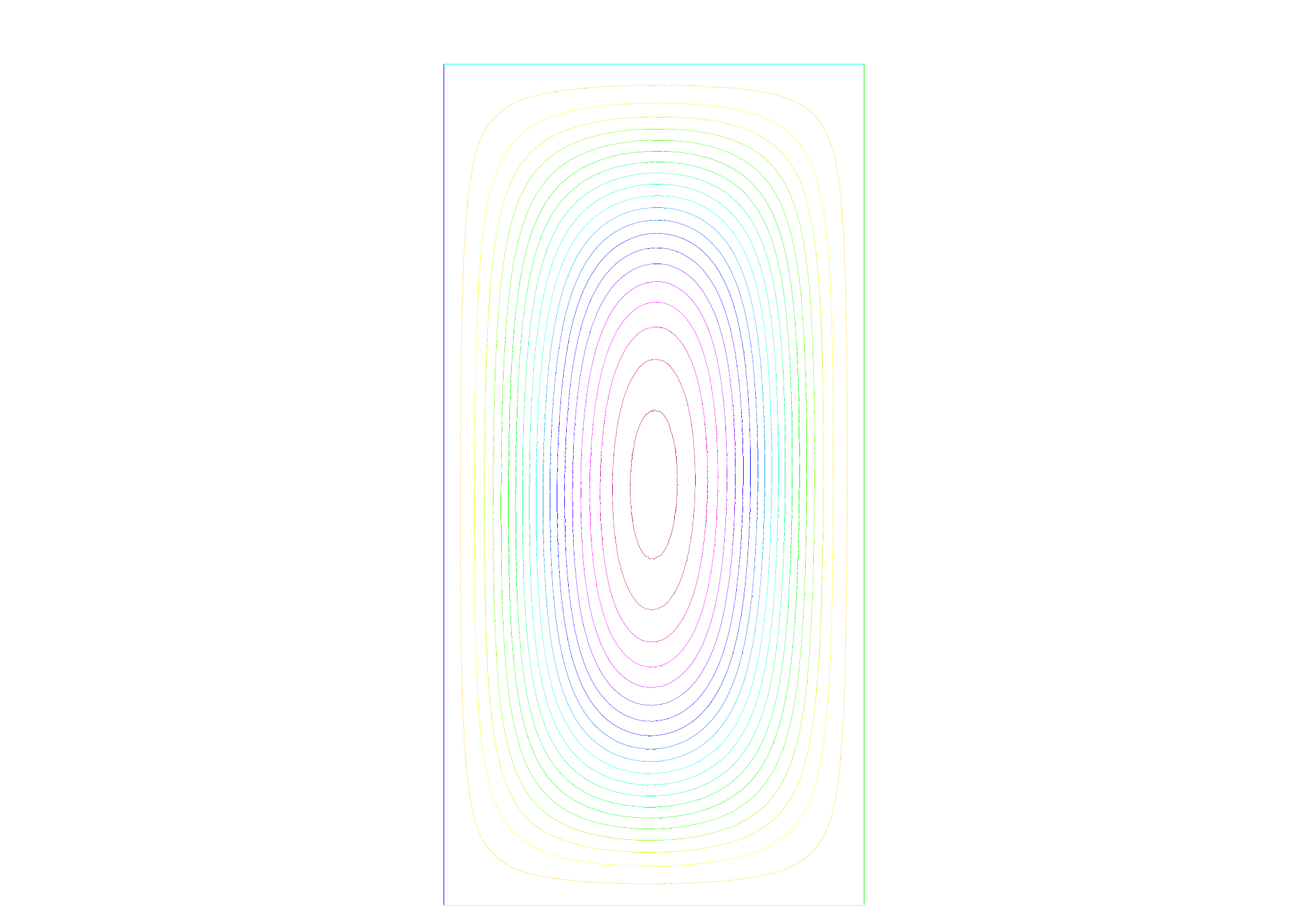,width=0.35\textwidth}
			\epsfig{figure=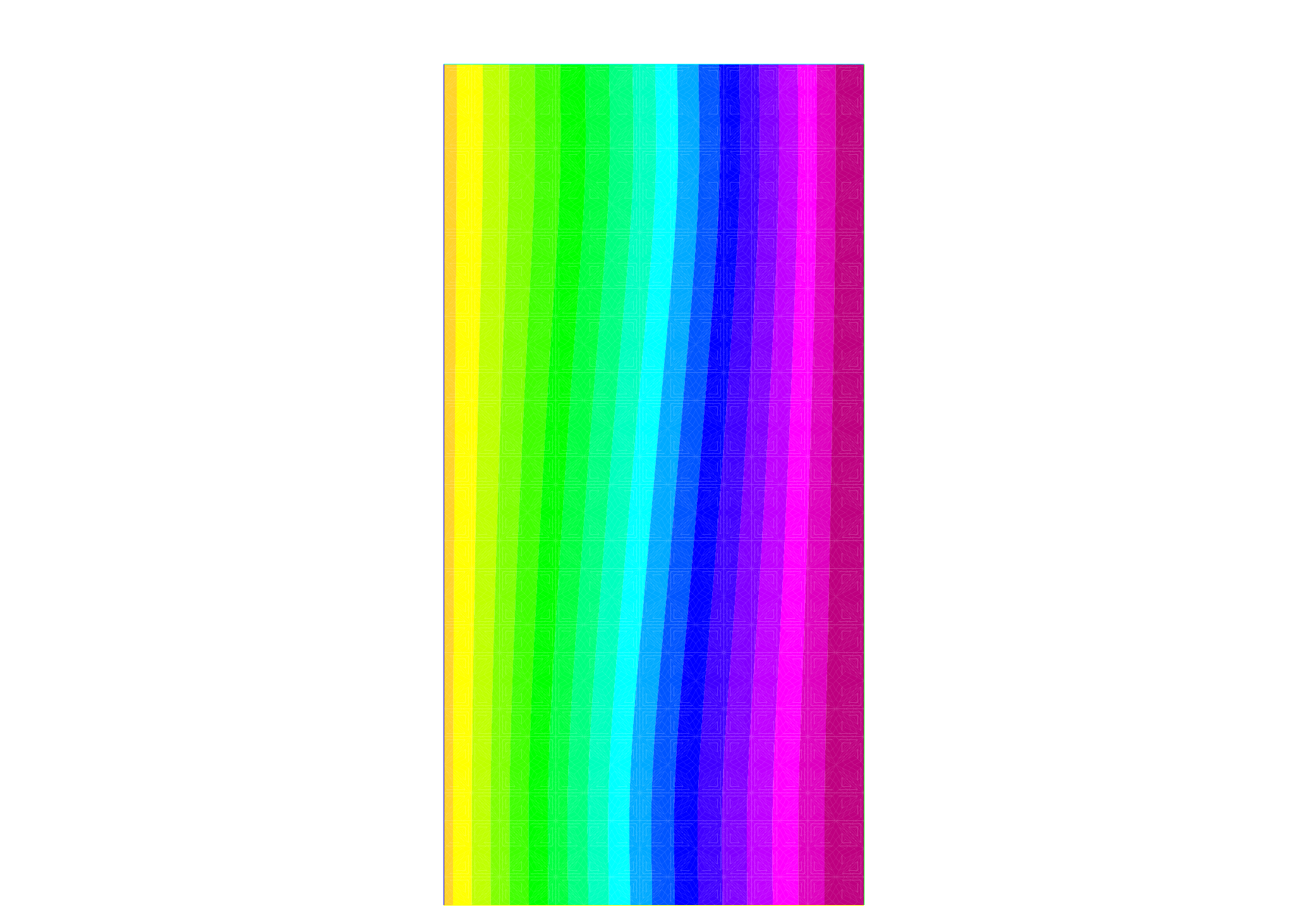,width=0.35\textwidth}
			\epsfig{figure=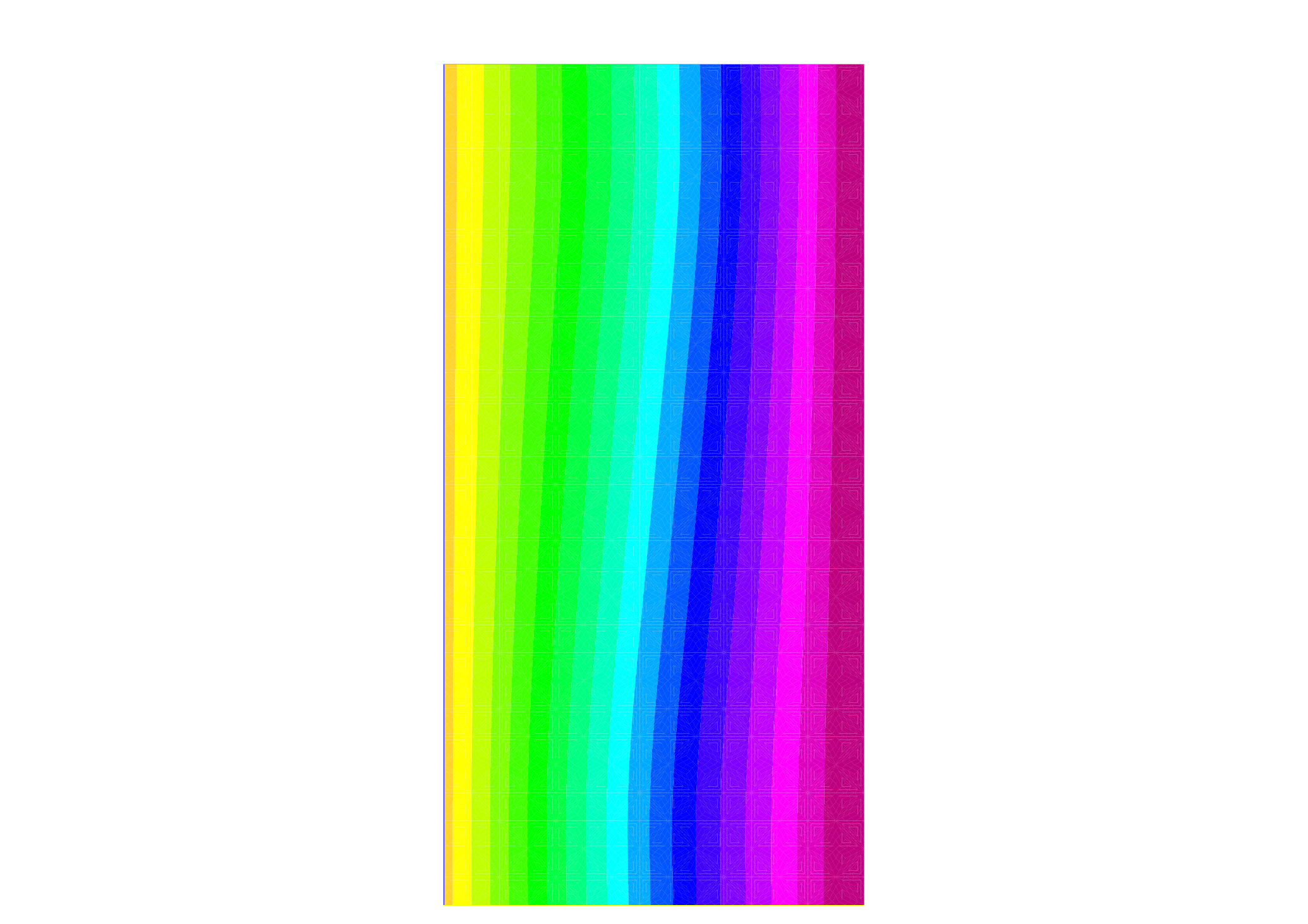,width=0.35\textwidth}
	}}
	\centerline{\hbox{
			\epsfig{figure=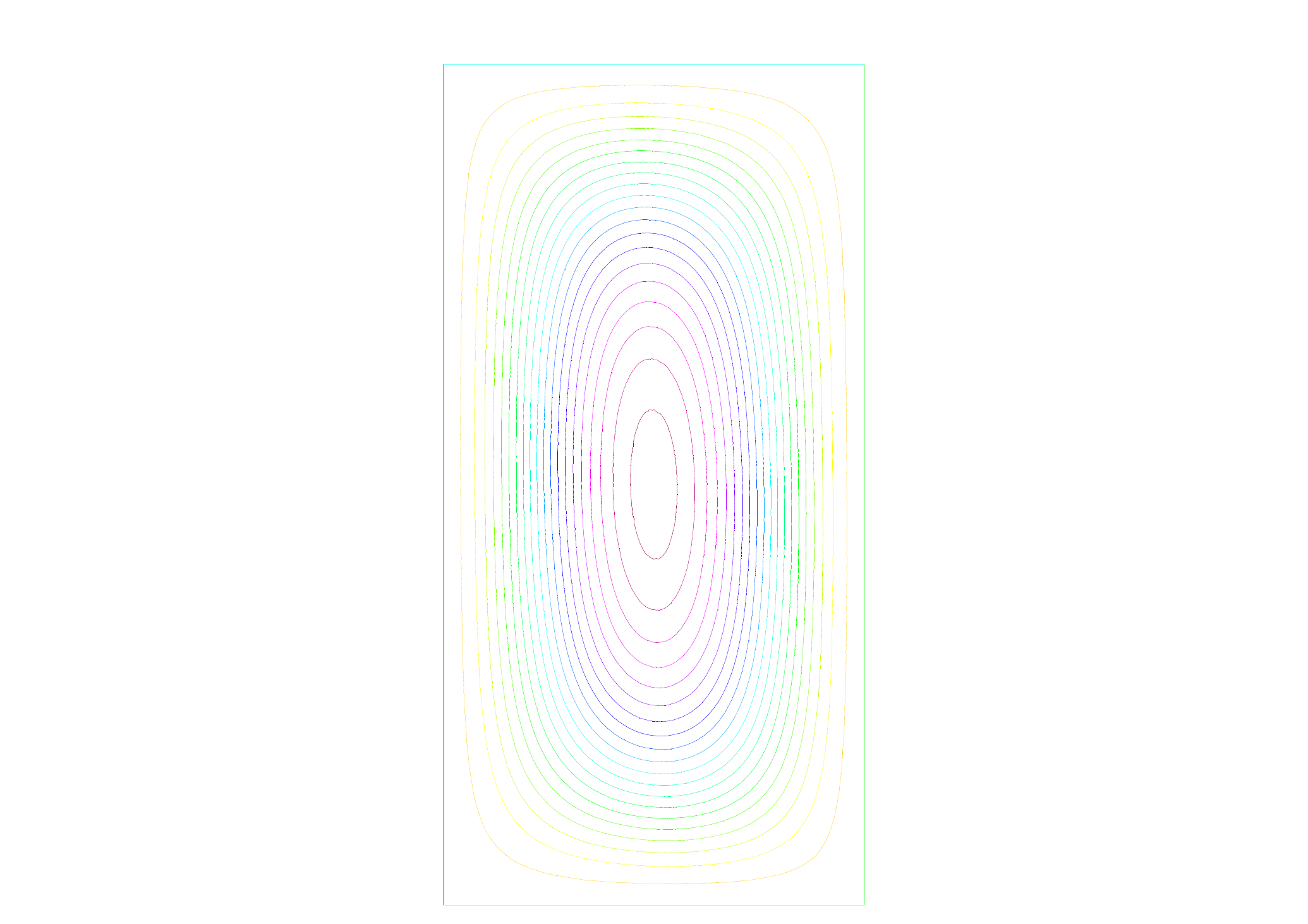,width=0.35\textwidth}
		\epsfig{figure=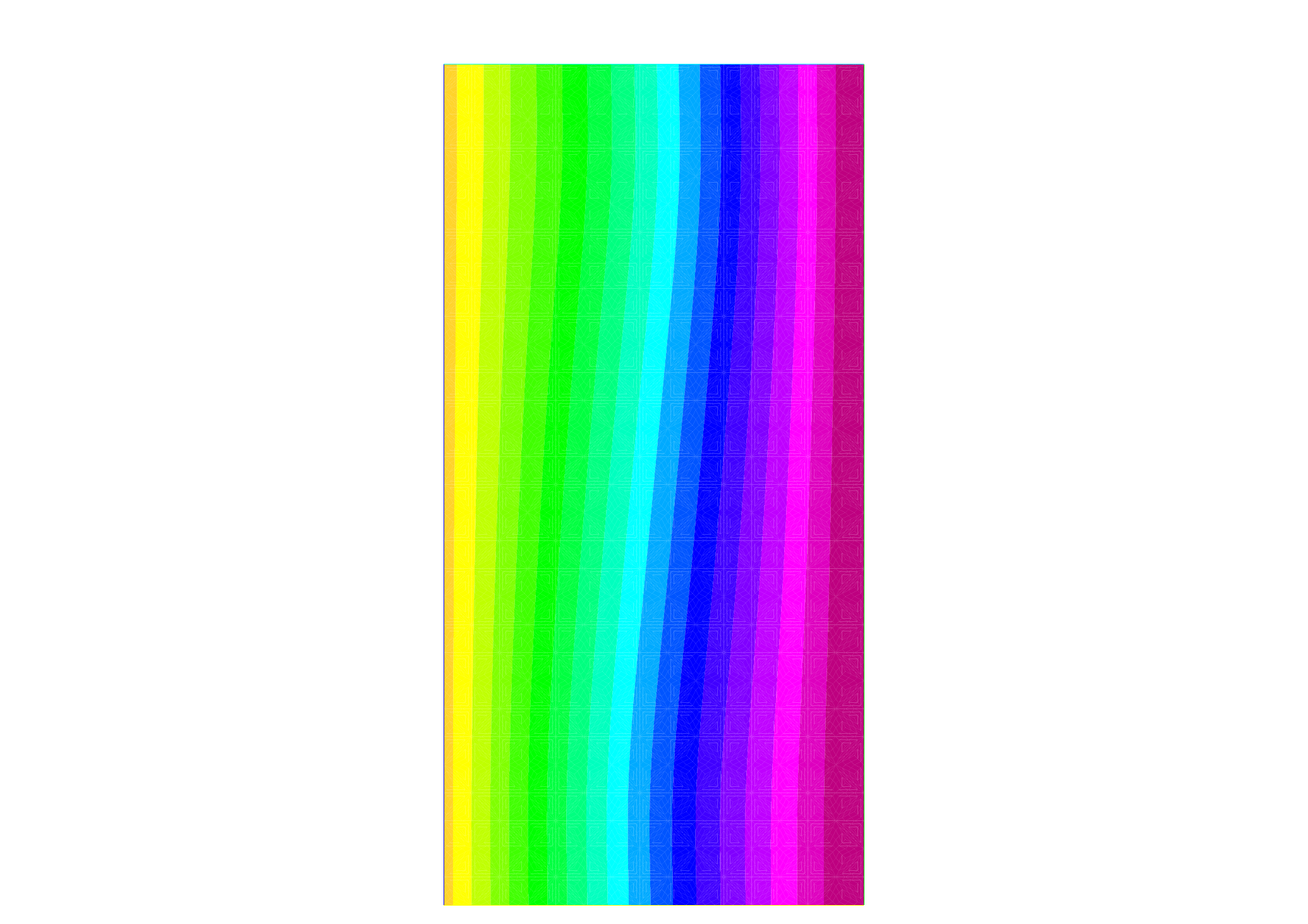,width=0.35\textwidth}
		\epsfig{figure=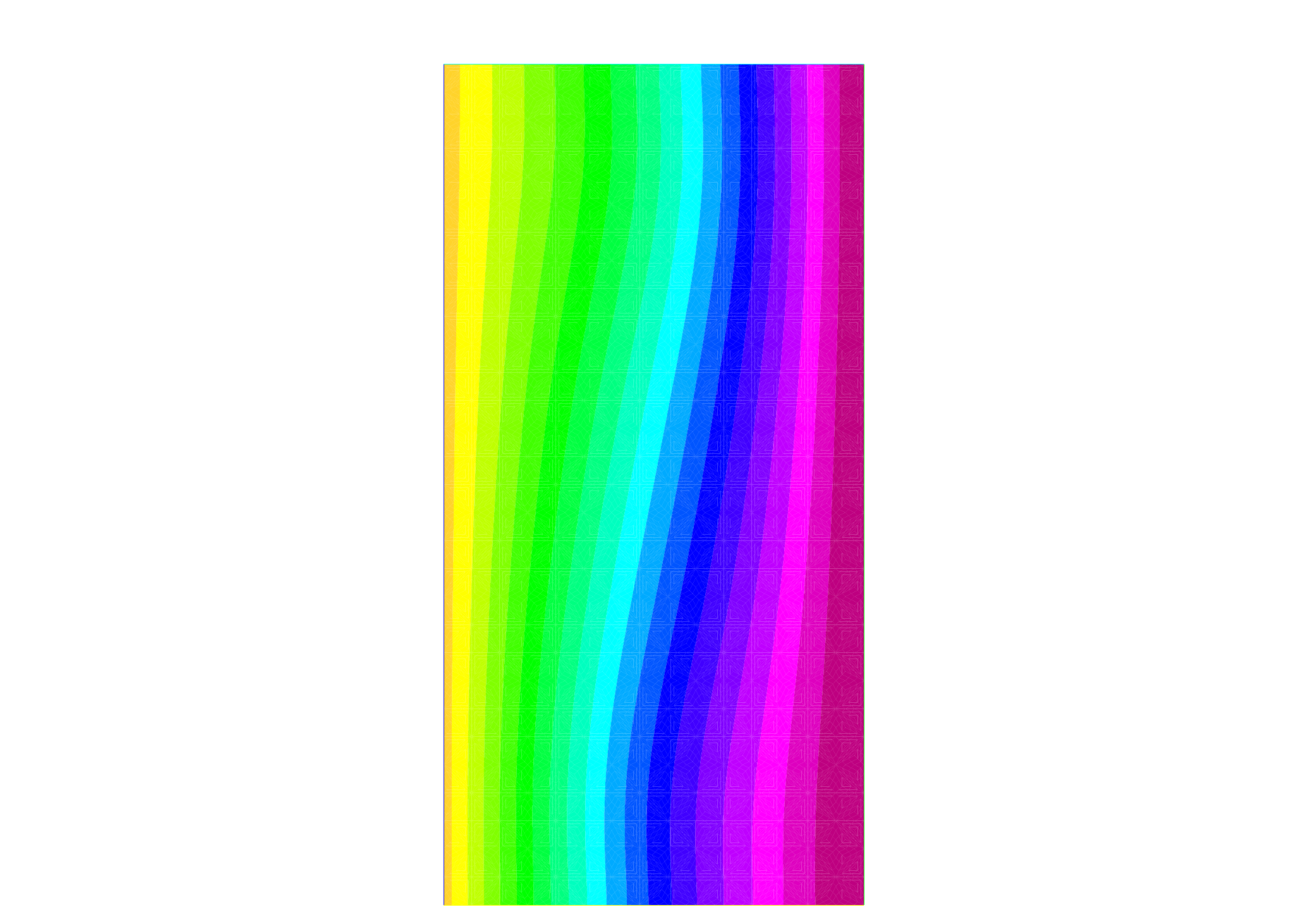,width=0.35\textwidth}
}}
	\caption{\label{fig:cav1}Streamlines, temperature contours and concentration contours for the scheme (up) and for DNS (down) for $\mu_1=\mu_2=\mu_3=1$  }
\end{figure}

\begin{figure}[H]
	\centerline{\hbox{
			\epsfig{figure=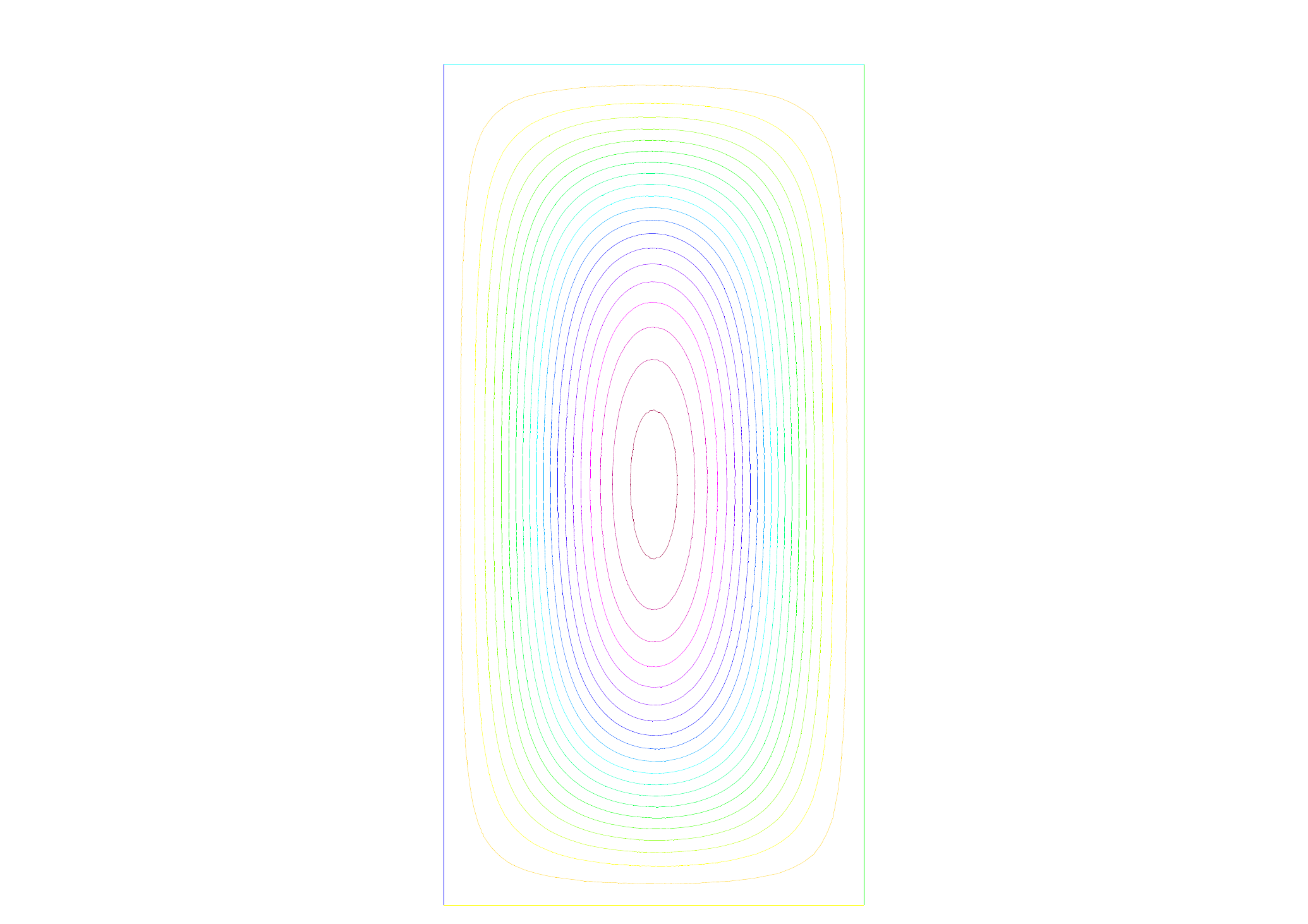,width=0.35\textwidth}
			\epsfig{figure=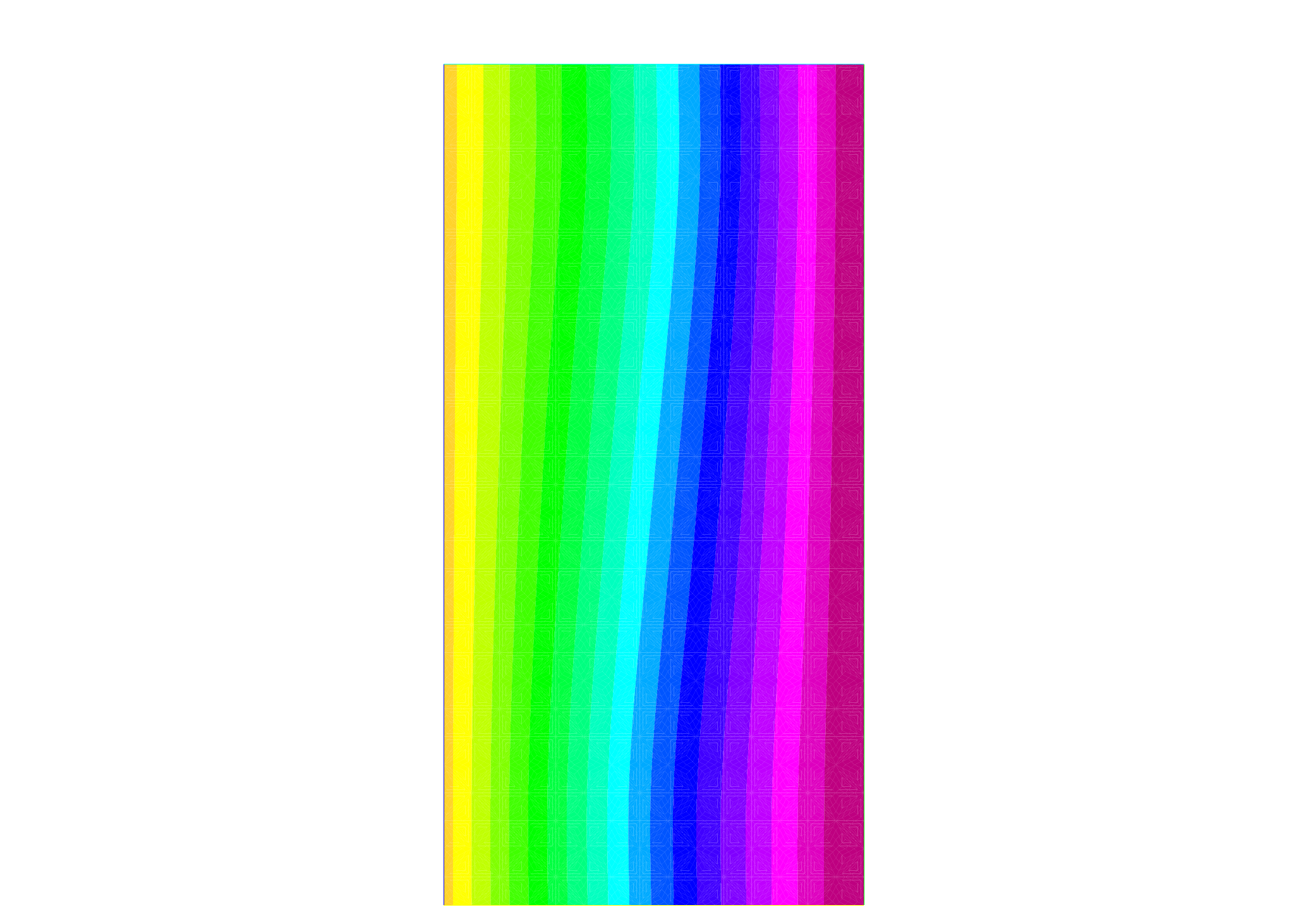,width=0.35\textwidth}
			\epsfig{figure=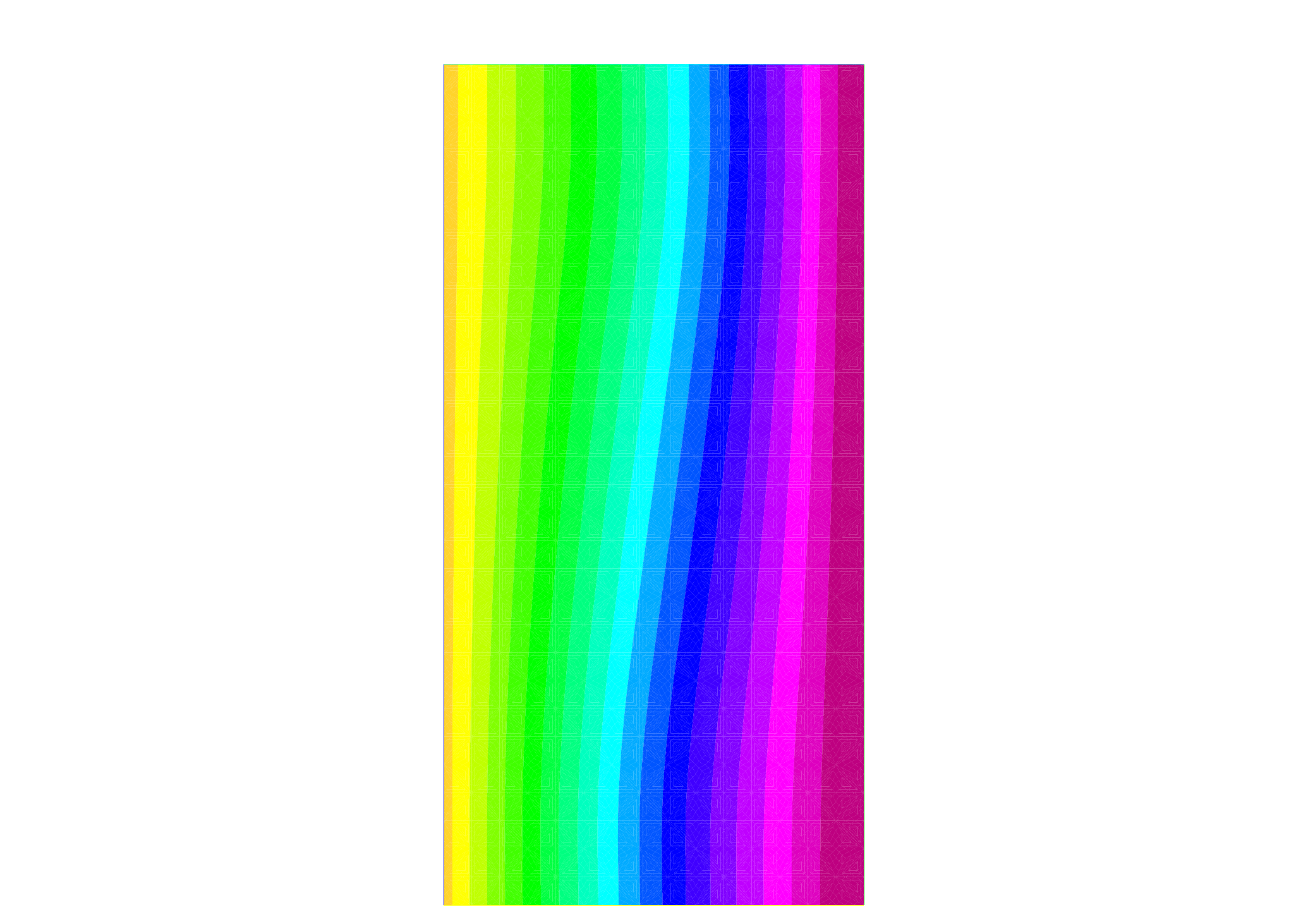,width=0.35\textwidth}
	}}
	\centerline{\hbox{
			\epsfig{figure=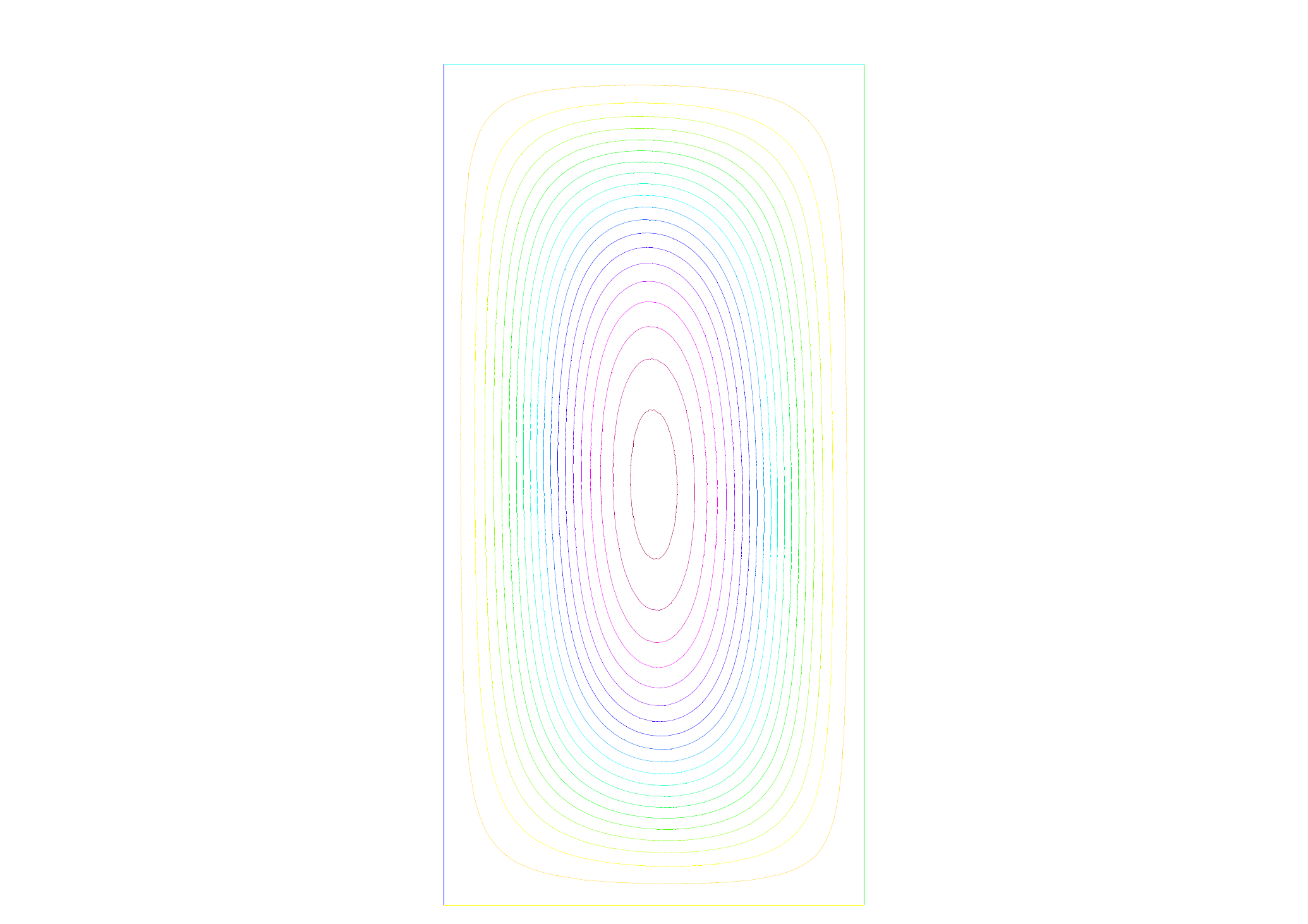,width=0.35\textwidth}
			\epsfig{figure=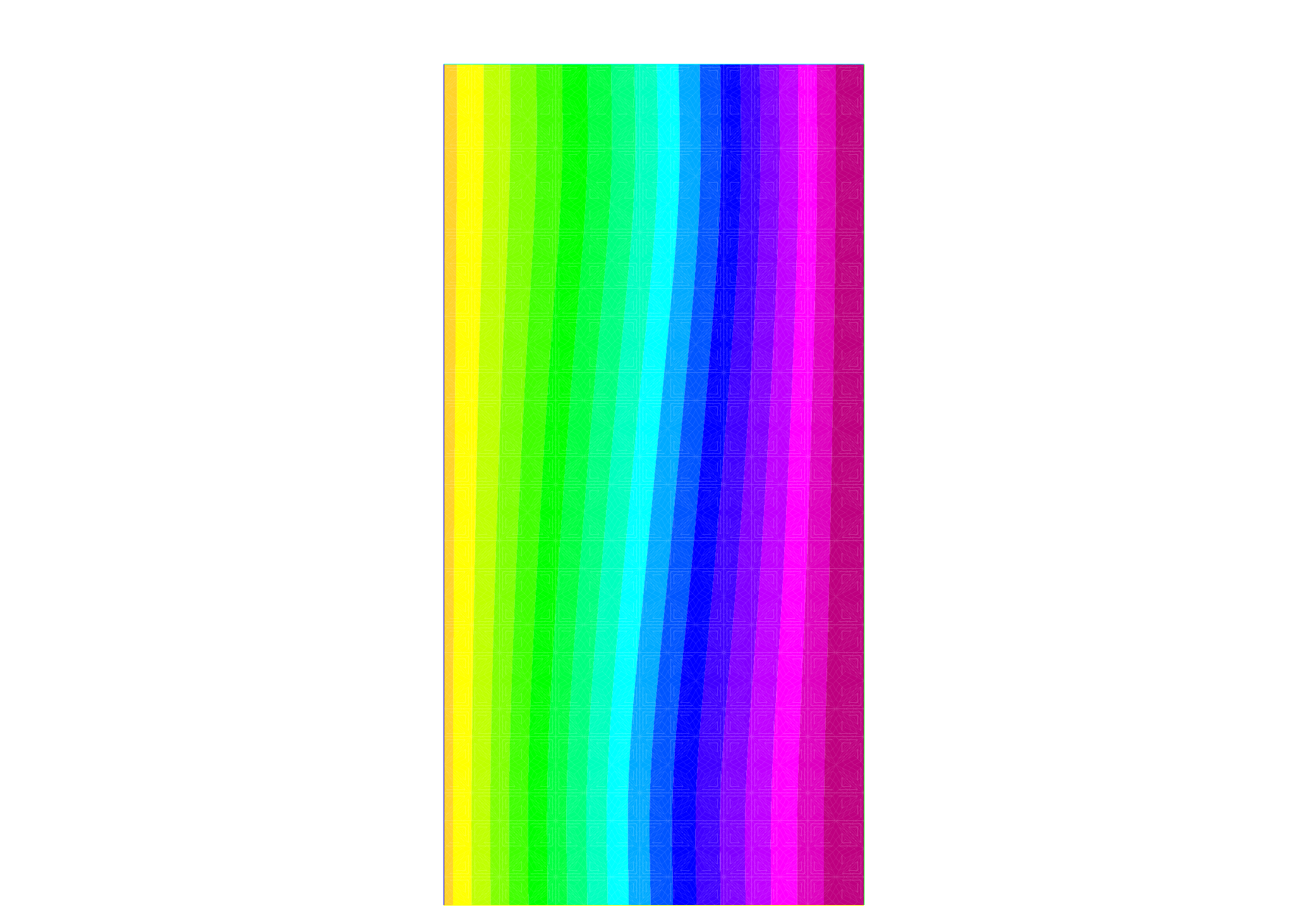,width=0.35\textwidth}
			\epsfig{figure=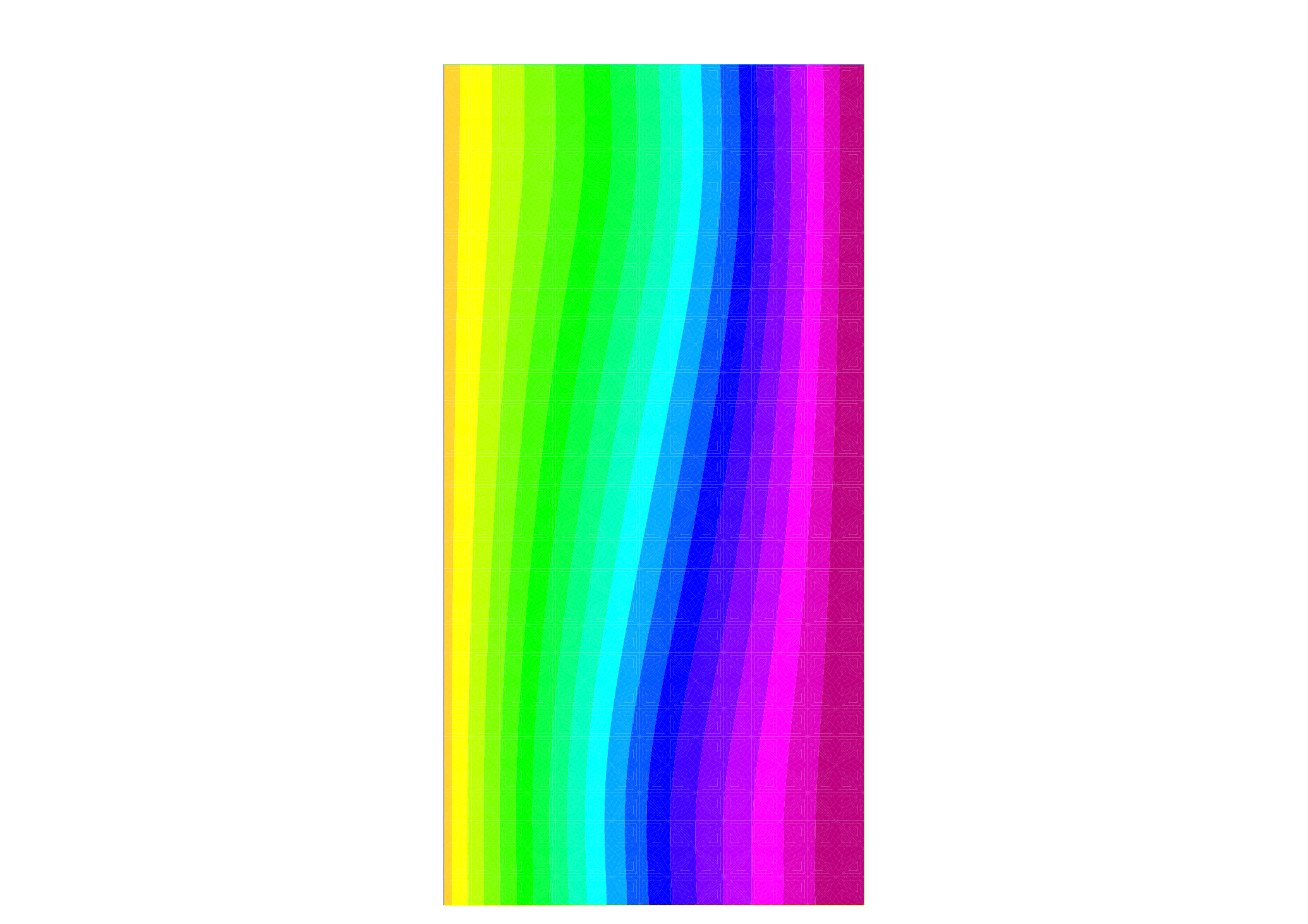,width=0.35\textwidth}
	}}
	\caption{\label{fig:cav2}Streamlines, temperature contours and concentration contours for the scheme (up) and for DNS (down) for $\mu_1=\mu_2=\mu_3=10$  }
\end{figure}
Clearly for both $\mu_1=\mu_2=\mu_3=1$ and$\mu_1=\mu_2=\mu_3=10$ cases, pictures are almost identical and agrees with DNS and previous studies \cite{bizimdouble} and \cite{chamka}.

Now considering the case $\mu_1>0, \mu_2=0, \mu_3=0$ we obtain Figures \ref{fig:cav3} and \ref{fig:cav4}.
\begin{figure}[H]
	\centerline{\hbox{
			\epsfig{figure=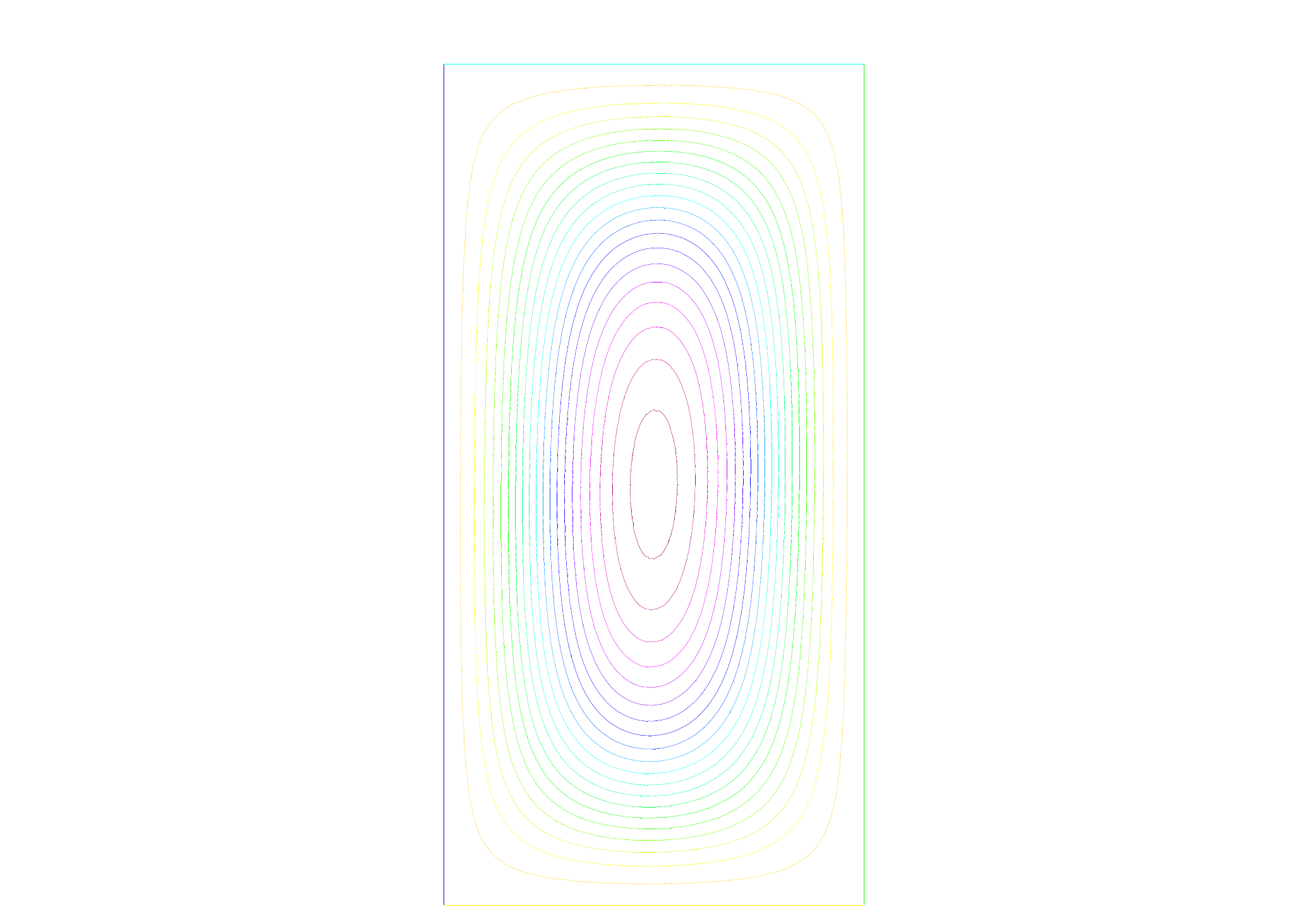,width=0.35\textwidth}
			\epsfig{figure=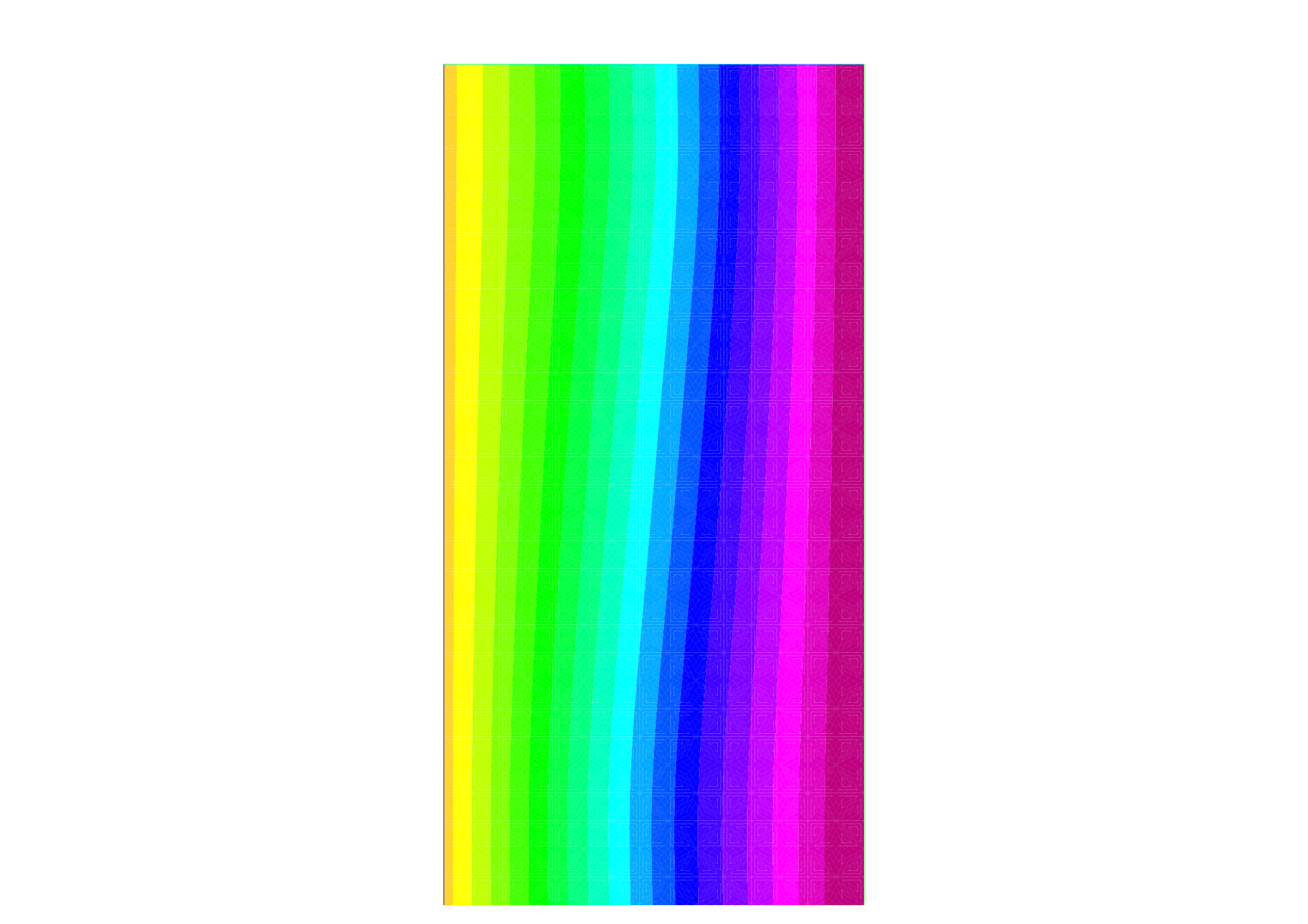,width=0.35\textwidth}
			\epsfig{figure=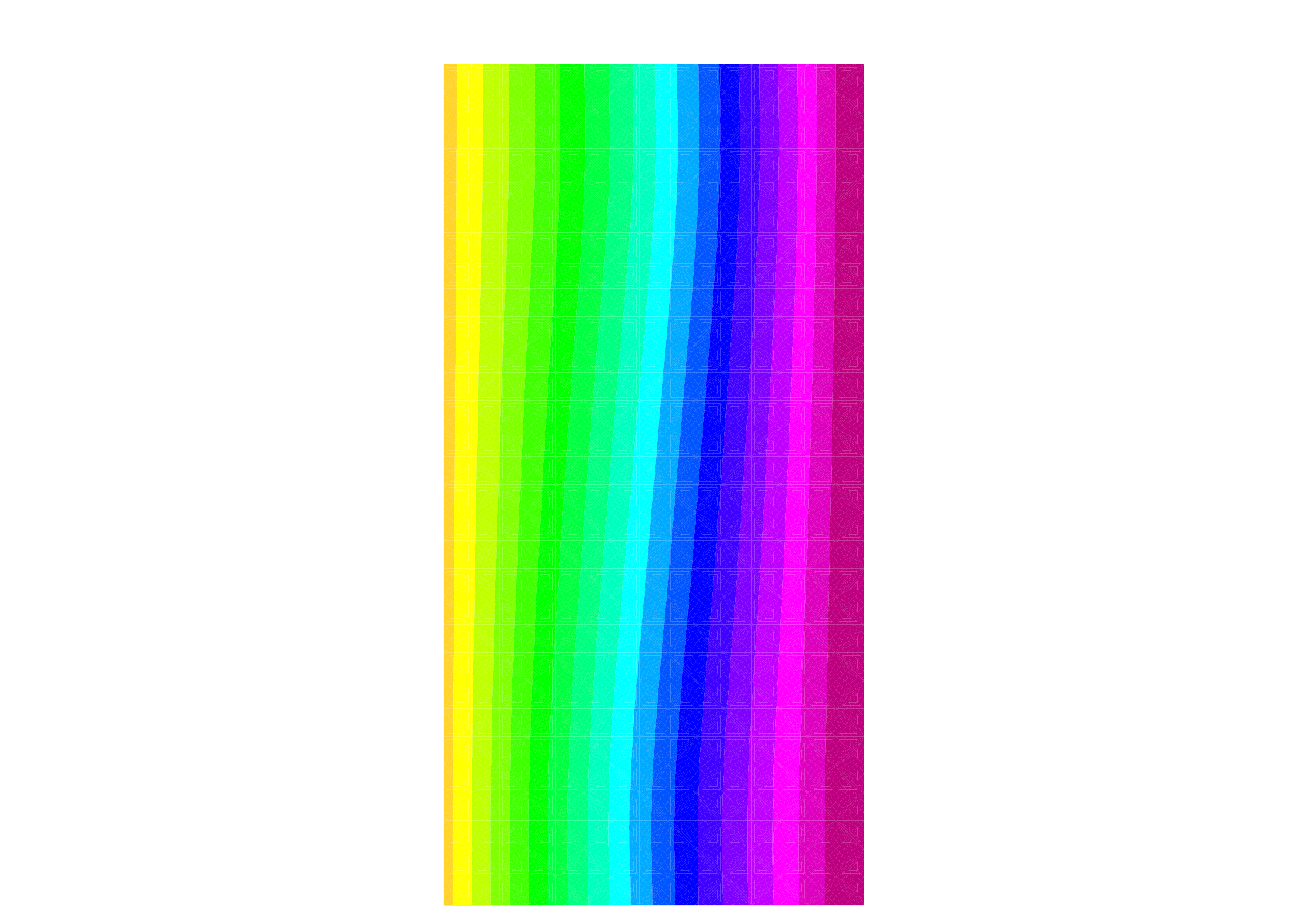,width=0.35\textwidth}
	}}
	\centerline{\hbox{
			\epsfig{figure=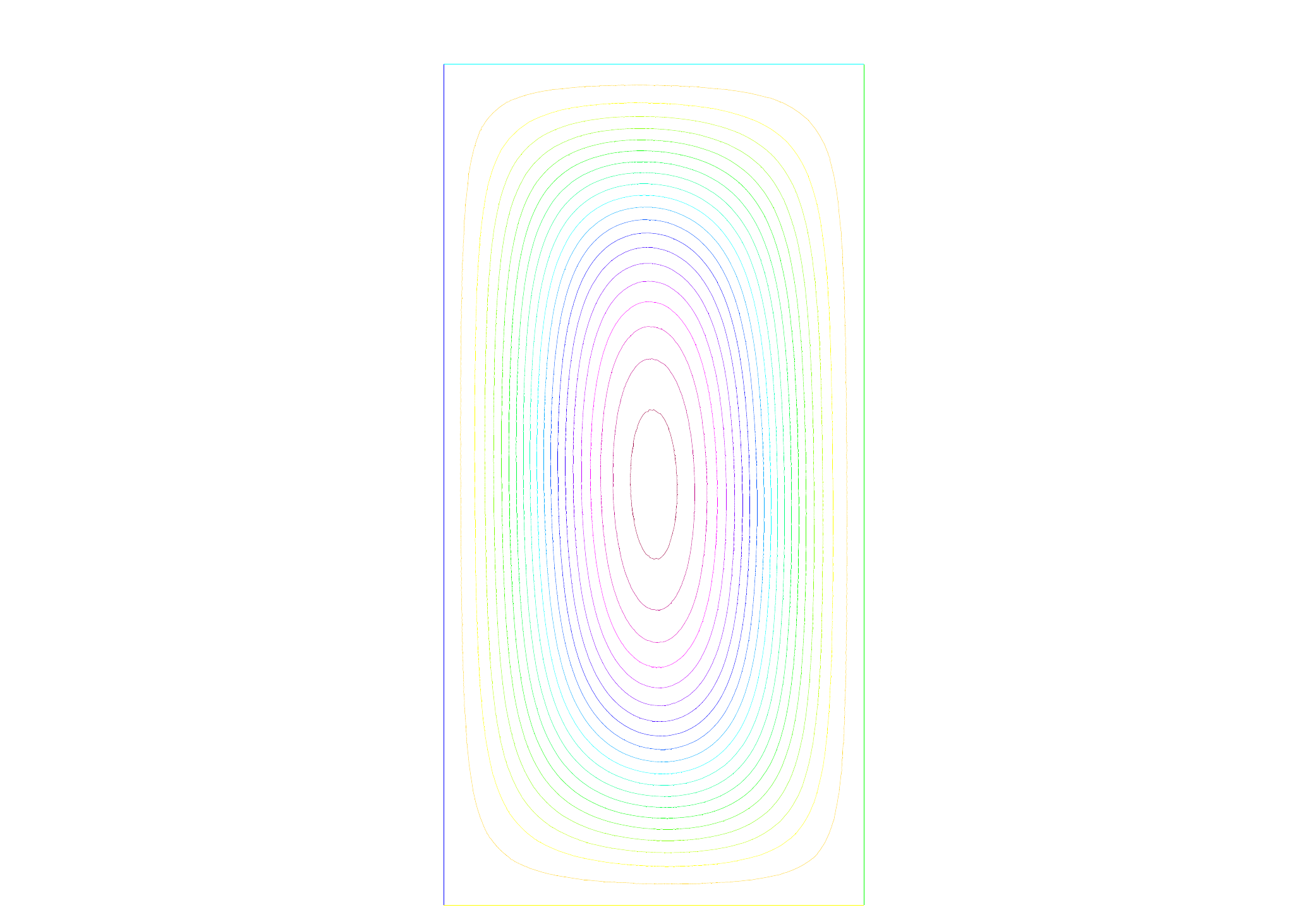,width=0.35\textwidth}
			\epsfig{figure=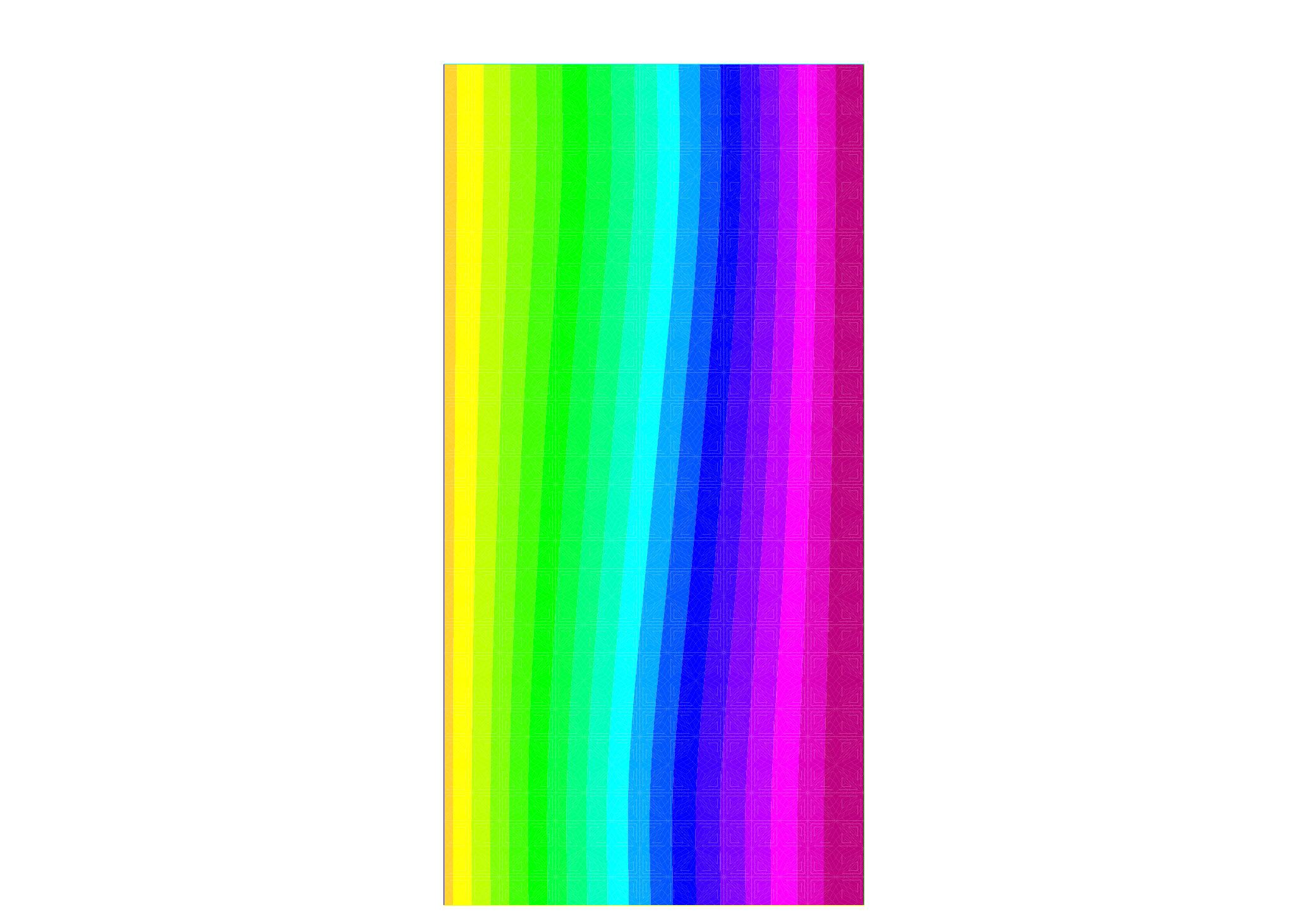,width=0.35\textwidth}
			\epsfig{figure=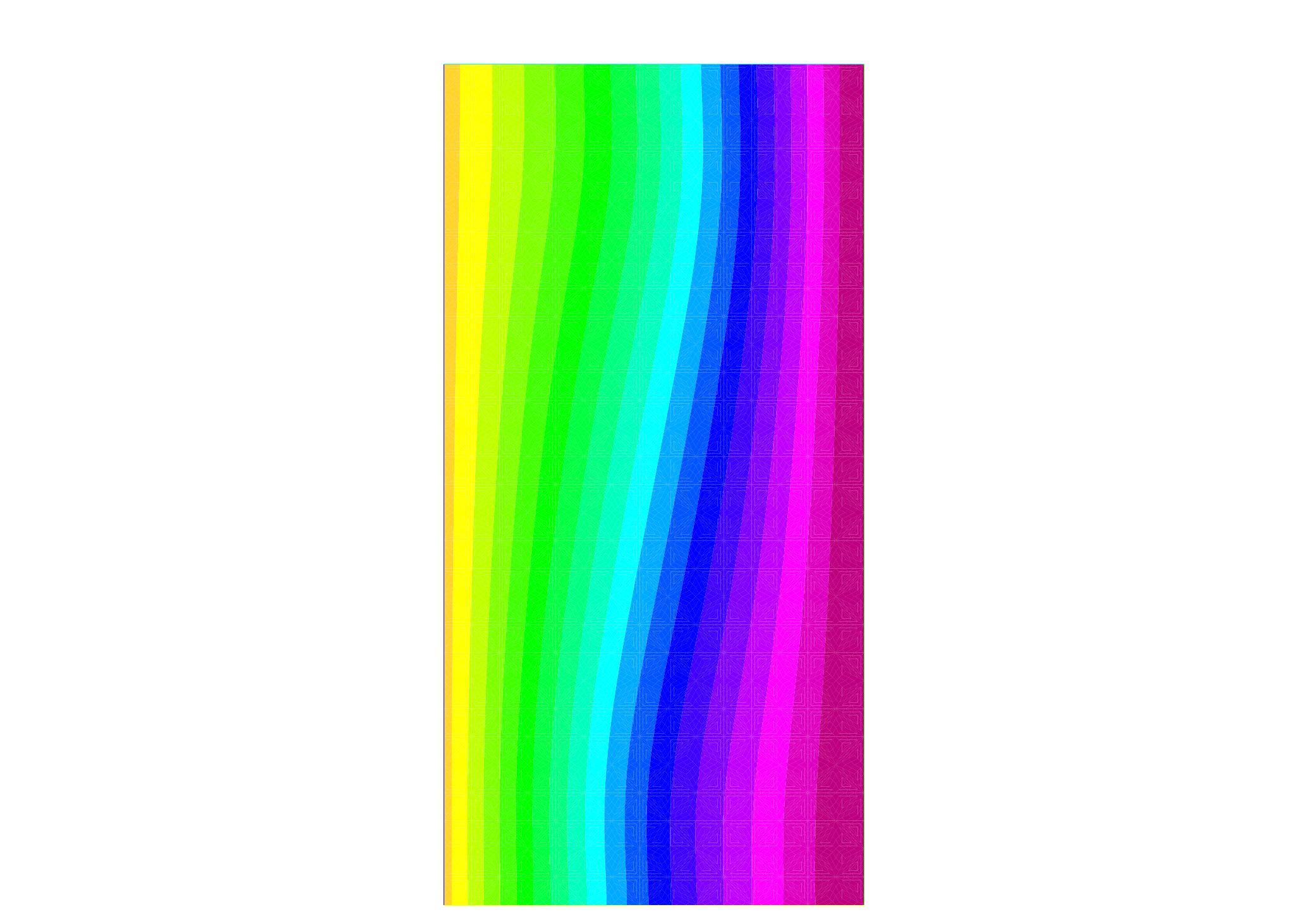,width=0.35\textwidth}
	}}
	\caption{\label{fig:cav3}Streamlines, temperature contours and concentration contours for the scheme (up) and for DNS (down) for $\mu_1=1,\,\,\mu_2=\mu_3=0$  }
\end{figure}

\begin{figure}[H]
	\centerline{\hbox{
		\epsfig{figure=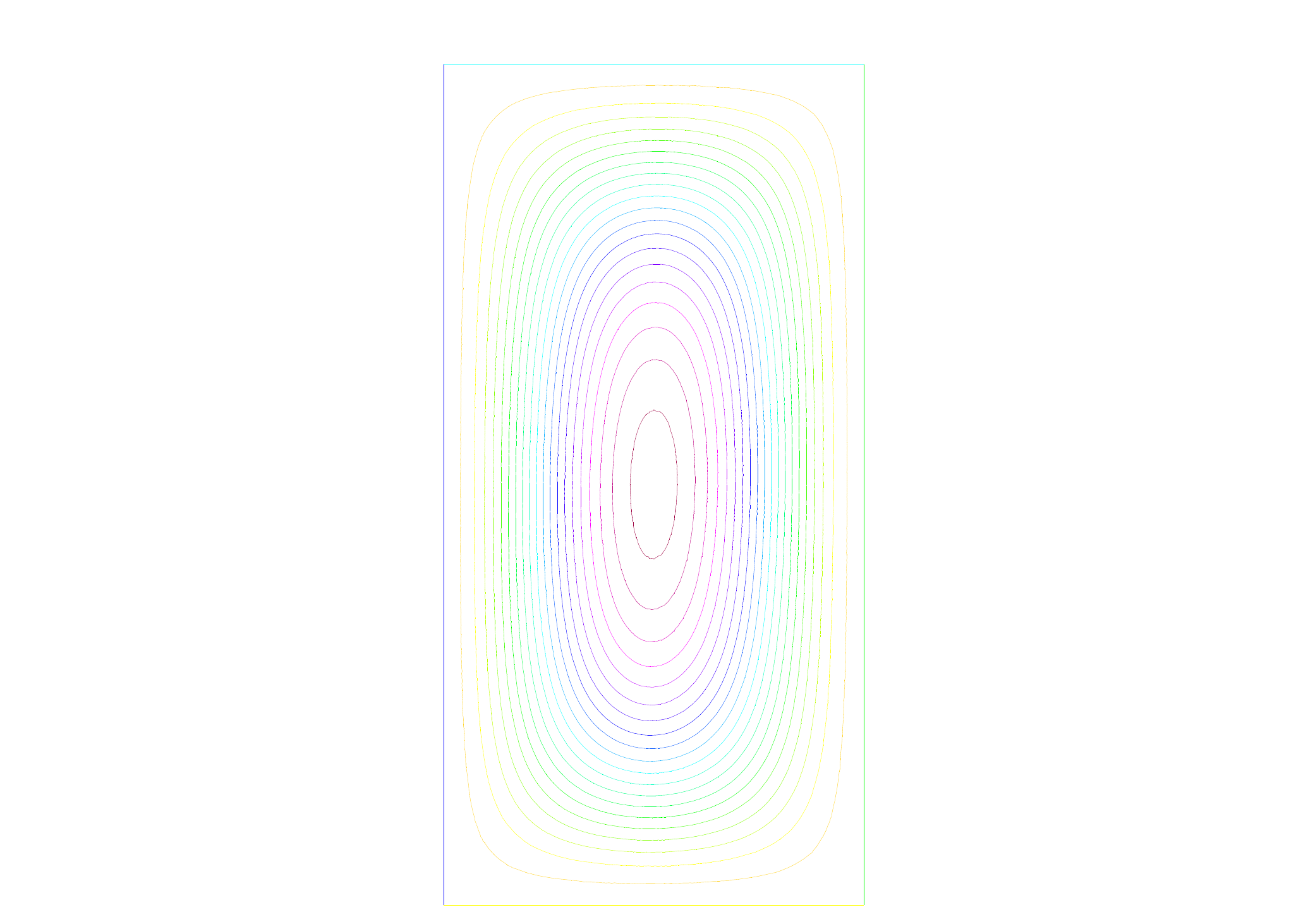,width=0.35\textwidth}
		\epsfig{figure=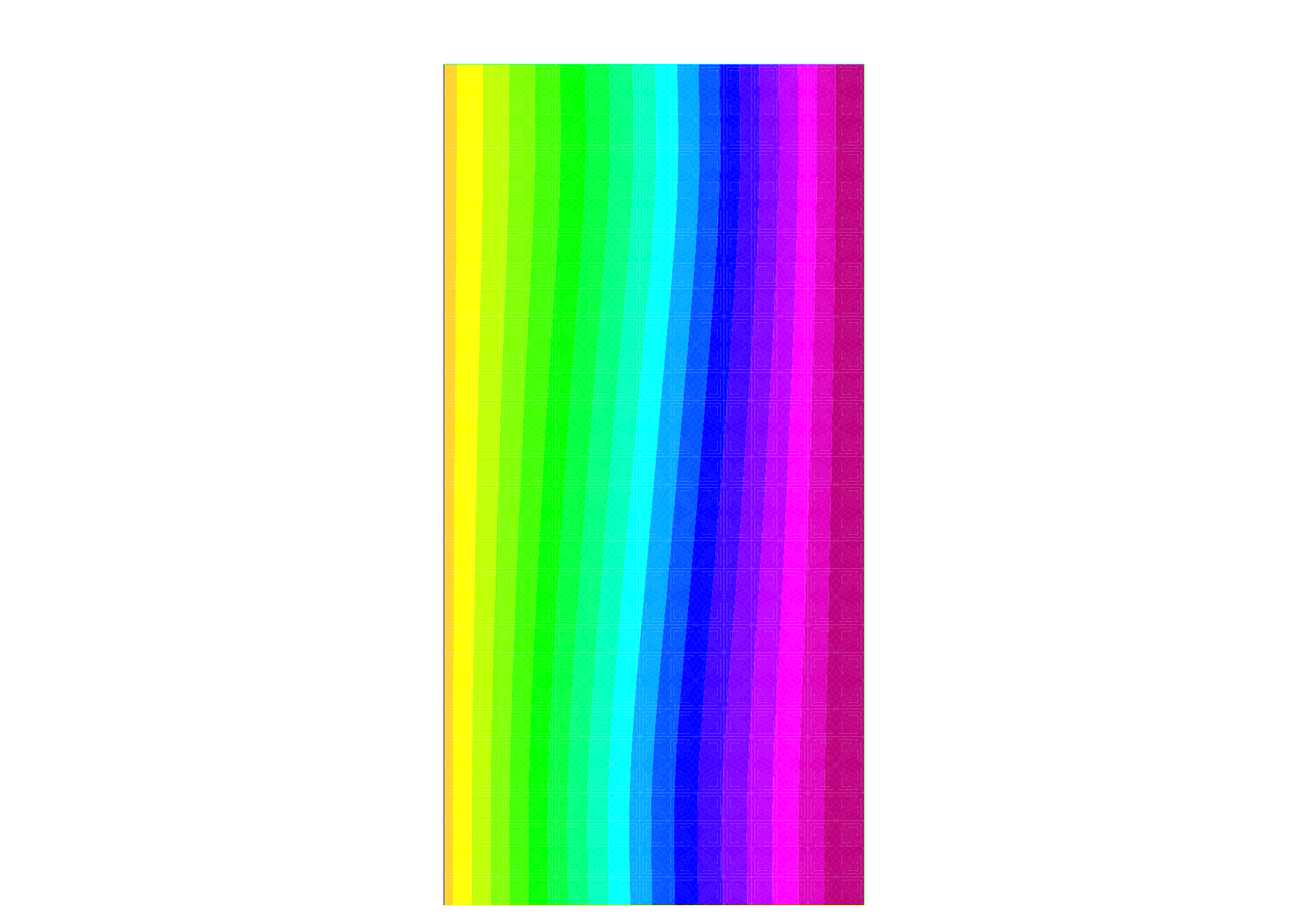,width=0.35\textwidth}
		\epsfig{figure=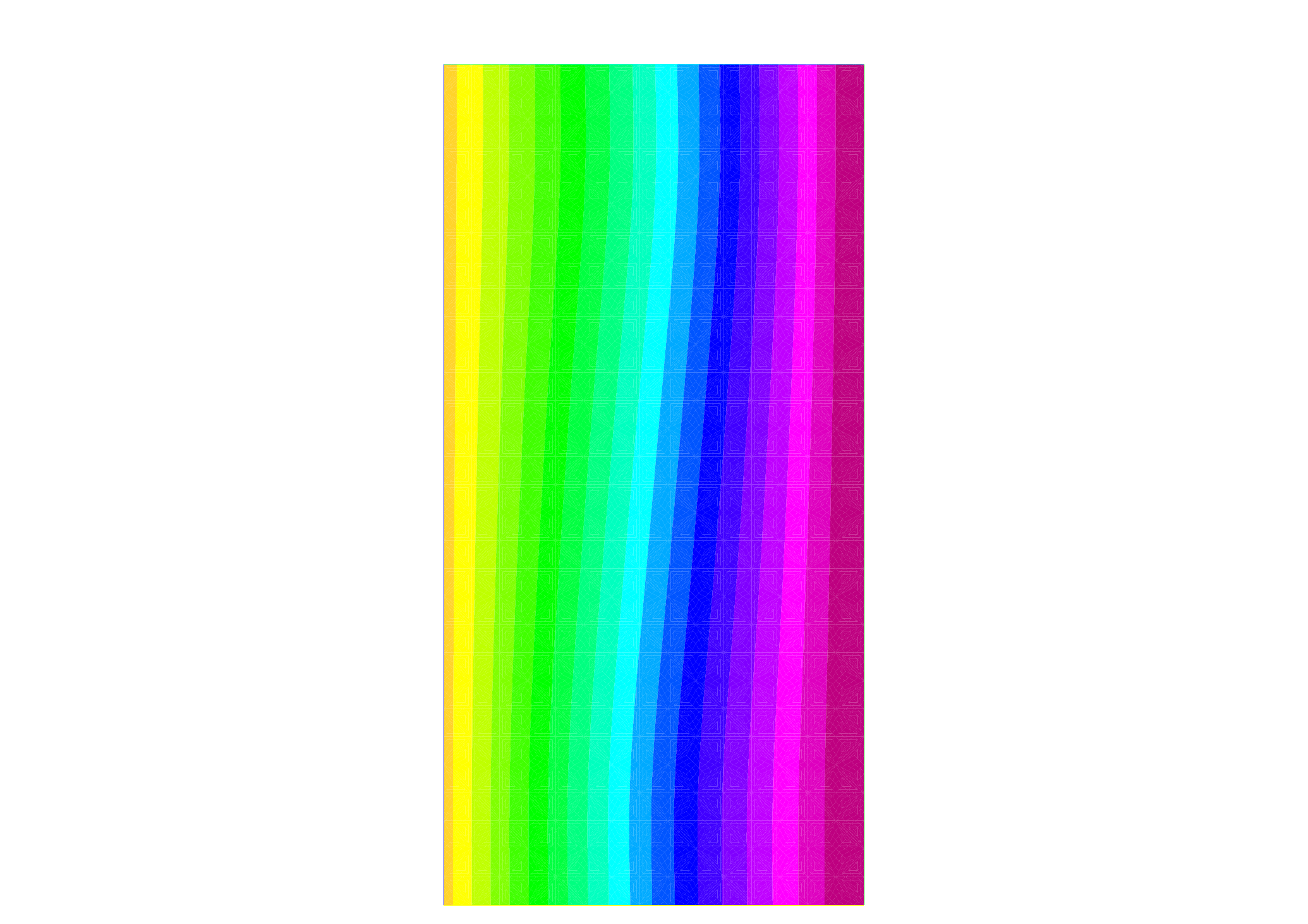,width=0.35\textwidth}
}}
\centerline{\hbox{
		\epsfig{figure=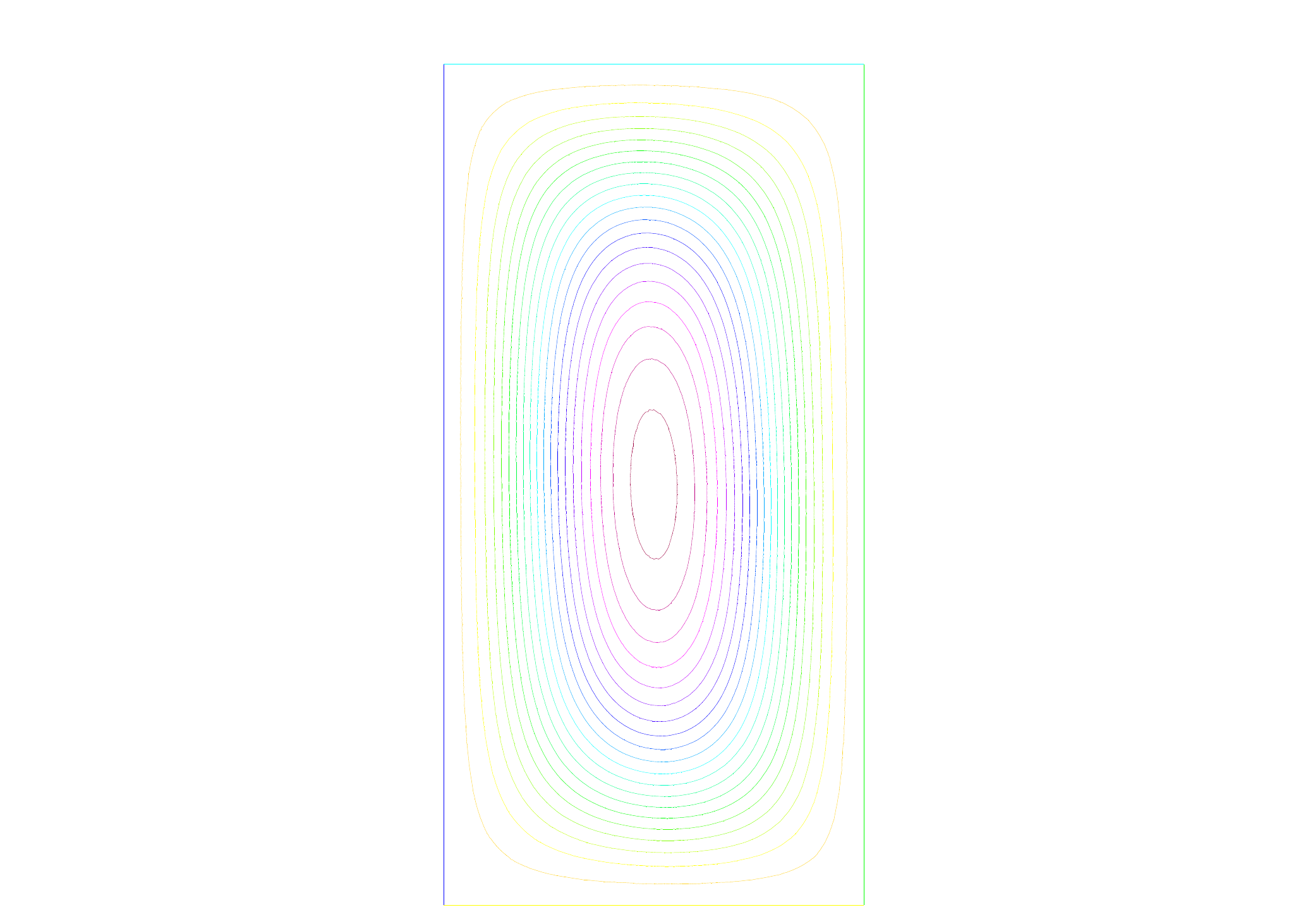,width=0.35\textwidth}
		\epsfig{figure=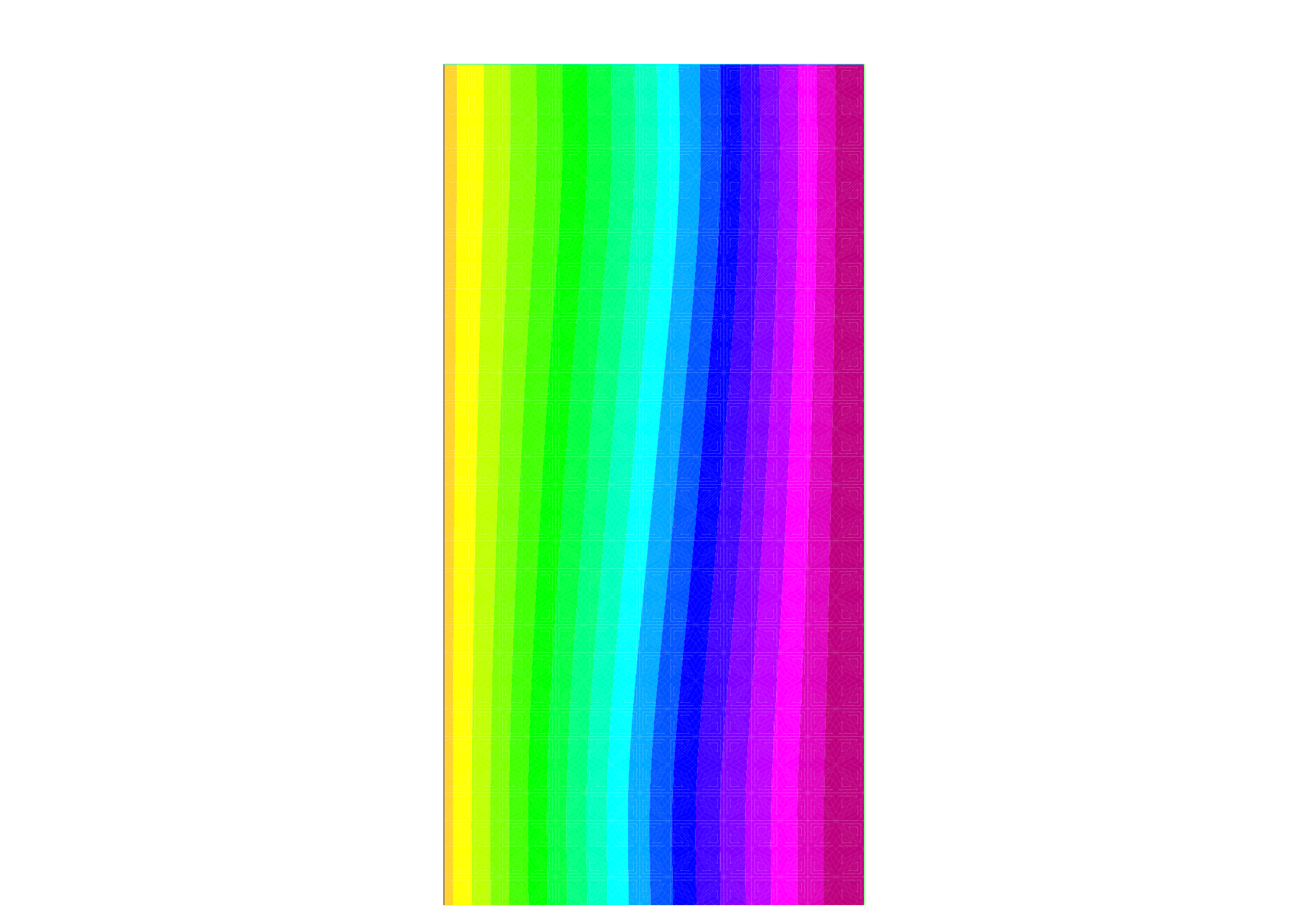,width=0.35\textwidth}
		\epsfig{figure=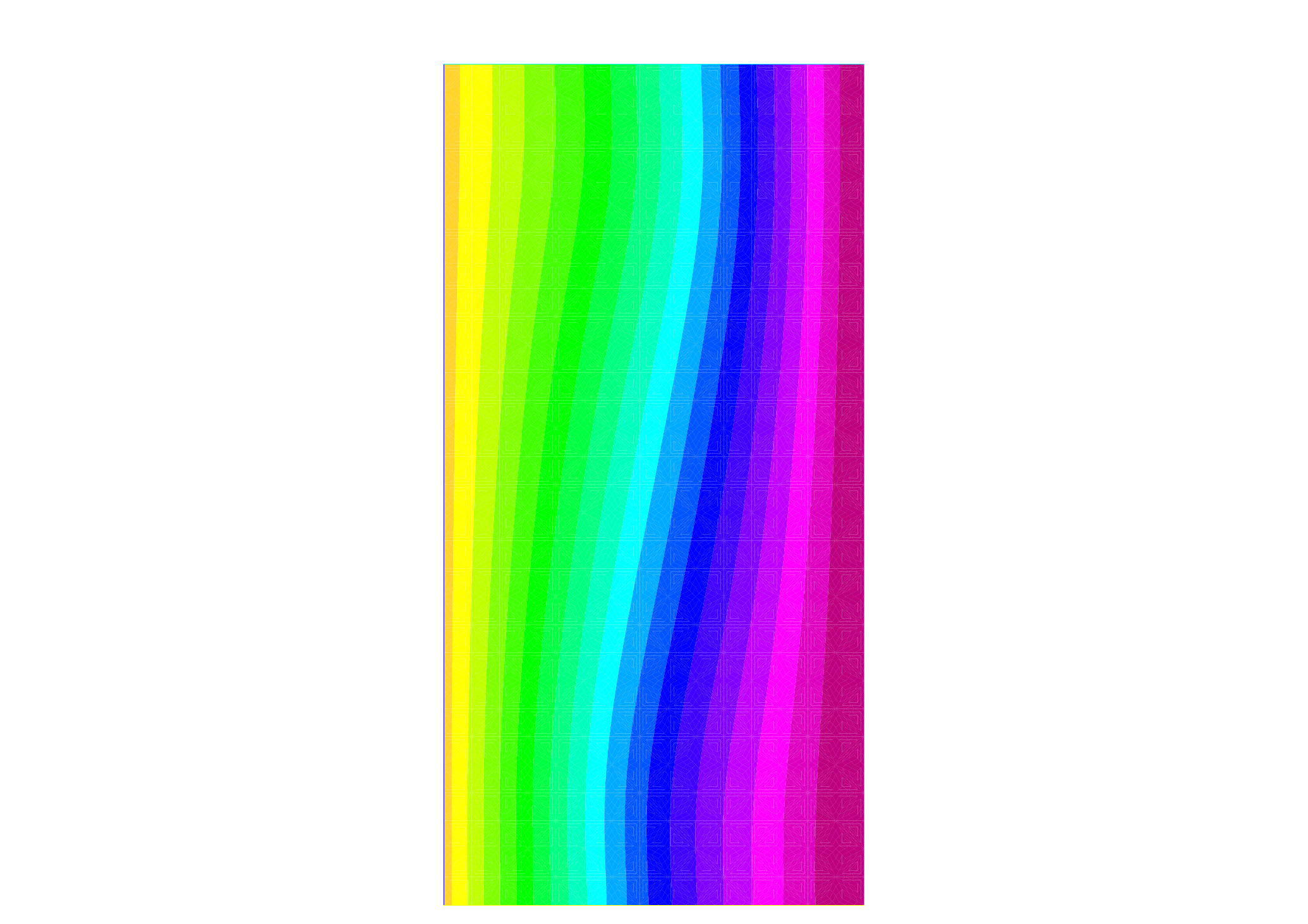,width=0.35\textwidth}
}}
	\caption{\label{fig:cav4}Streamlines, temperature contours and concentration contours for the scheme (up) and for DNS (down) for $\mu_1=10,\,\,\mu_2=\mu_3=0$  }
\end{figure}
These figures are also identical to each other and $\mu_1>0, \mu_2>0, \mu_3>0$ case. We cannot see any effect of changing $\mu_1$ value too. The only difference to be mentioned here is, the CPU time for running $\mu_1>0, \mu_2>0, \mu_3>0$ case is slightly shorter than the case $\mu_1>0, \mu_2>=\mu_3=0$. We can conclude from this test that, our scheme presented with Algorithm \ref{algbe} convergences to DNS and produce correct result with both nudging all equations and nudging only velocity equation with random initial data.
\section{Conclusions}
We have presented and investigated a continuous data assimilation scheme cast on Darcy-Brinkman equations. The scheme involves a first or second order time discretization along with a finite element spatial discretization. We have given long time stability and accuracy analyses for different cases of nudging parameters. These finding are tested via different numerical experiments confirming the excepted error order and usage in a practical test case.

As a further research topic, we will consider to incorporate the nudging terms in some stabilization methods like VMS stabilization for different kinds of flow problems.

%\bibliographystyle{plain}
%\bibliography{references}
\end{document}